\theoremstyle{plain}
\newtheorem{theorem}{Theorem}[section]
\newtheorem{corollary}[theorem]{Corollary}
\newtheorem{lemma}[theorem]{Lemma}
\newtheorem{proposition}[theorem]{Proposition}
\theoremstyle{definition}
\newtheorem{remark}[theorem]{Remark}
\newtheorem{question}[theorem]{Question}
\newtheorem{case}{Case}
\renewcommand{\leq}{\leqslant}
\renewcommand{\geq}{\geqslant}\usepackage{amssymb}
\newcommand{\vr}{\varepsilon}
\newcommand{\one}{\mathbf{1}}
\newcommand{\be}{\begin{equation}}
\newcommand{\ee}{\end{equation}}
\newcommand{\R}{\mathbb R}
\newcommand{\N}{\mathbb N}
\newcommand{\ave}{{\mathrm{Ave}}}
\newcommand{\ce}{\mathcal E}
\newcommand{\se}{{{\mathfrak{C}}}_{\mathcal{E}}}
\newcommand{\ball}{\mathbf{B}}
\newcommand{\A}{\mathcal{A}}
\newcommand{\cs}{\mathfrak{C}}
\newcommand{\dist}{\mathrm{dist}}
\newcommand{\lore}{\mathfrak{l}}
\def \ker{{\mathrm{ker}} \, }
\renewcommand{\span}{\mathrm{span}}
\def \vr{\varepsilon}
\def \ran{{\mathrm{ran}} \, }
\def \eqalign#1{\null\,\vcenter{\openup\jot 
   \ialign{\strut\hfil$\displaystyle{##}$&$
      \displaystyle{{}##}$\hfil \crcr#1\crcr}}\,}
\def\iK{\mathcal {K}}
\def\iFSS{\mathcal {FSS}}
\def\iSS{\mathcal {SS}}
\def \iIN{\mathcal{IN}}
\def \iWK{\mathcal{WK}}
\def \iDP{\mathcal{DP}}
\def \iSCS{\mathcal{SCS}}
\DeclareMathOperator{\sgn}{sgn}
\begin{document}

\nocite{*}

\numberwithin{equation}{section}

\pagestyle{headings}


\title[Operator ideals]{Operator ideals on non-commutative function spaces}

\author[T. Oikhberg]{Timur Oikhberg}
\address{
Dept.~of Mathematics, University of Illinois at Urbana-Champaign, Urbana IL 61801, USA}
\email{oikhberg@illinois.edu}

\author[E. Spinu]{Eugeniu Spinu}
\address{Dept. of Mathematical and Statistical Sciences, University of Alberta
Edmonton, Alberta  T6G 2G1, CANADA}
\email{espinu@math.ualberta.ca}

\begin{abstract}
Suppose $X$ and $Y$ are Banach spaces, and ${\mathcal{I}}$, ${\mathcal{J}}$
are operator ideals  (for instance, the ideals of strictly singular, weakly compact, or
compact operators). Under what conditions does the inclusion
${\mathcal{I}}(X,Y) \subset {\mathcal{J}}(X,Y)$, or the equality
${\mathcal{I}}(X,Y) = {\mathcal{J}}(X,Y)$, hold?
We examine this question when ${\mathcal{I}}, {\mathcal{J}}$ are the ideals of
Dunford-Pettis, strictly (co)singular,
finitely strictly singular, inessential, or (weakly) compact operators,
while $X$ and $Y$ are non-commutative function spaces.
Since such spaces are ordered, we also address the same questions for positive parts
of such ideals. 
\end{abstract}

\thanks{The authors acknowledge the generous support of Simons Foundation,
via its Travel Grant 210060. They would also like to thank the organizers of
Workshop in Linear Analysis at Texas A\&M, where part of this work was
carried out.}

\maketitle

\section{Introduction}\label{s:intro}

Let $X$ and $Y$ be Banach spaces. We denote by $B(X,Y)$ the set of all linear bounded operators
acting between $X$ and $Y$. In this paper, we address the following questions.
When do certain classical operator ideals in $B(X,Y)$ coincide? What are the necessary
and sufficient conditions for an operator in $B(X,Y)$ to belong to a given ideal?
We concentrate on the case when $X$ or $Y$ are non-commutative function or sequence spaces.
These spaces are ordered, so we also consider the same questions 
for the subsets of positive operators in those ideals.

Here and below, ``operator ideals'' are understood in the sense of \cite{Pie}.
More precisely: the operator ideal ${\mathcal{I}}$ ``assigns'' to any pair of Banach
spaces $X$ and $Y$, a linear subspace ${\mathcal{I}}(X,Y) \subset B(X,Y)$, in such a way
that the following conditions are satisfied:
\begin{enumerate}
\item 
If $T \in B(X,Y)$ has finite rank, then $T \in {\mathcal{I}}(X,Y)$.
\item
If $T \in {\mathcal{I}}(X,Y)$, $U \in B(X_0,X)$, and $V \in B(Y,Y_0)$,
then $VTU \in {\mathcal{I}}(X_0,Y_0)$.
\end{enumerate}

Several ideals of operators will appear throughout this paper. The well-known ideals
of compact, respectively weakly compact, operators, are denoted by $\iK$ and $\iWK$.
An operator $T \in B(X,Y)$ is called \emph{Dunford-Pettis} (DP for short) if it takes weakly
compact sets to relatively compact sets. Equivalently, it maps weakly null sequences
to norm null sequences (see e.g. \cite[Chapter 5]{AK} for more information).
$T$ is said to be \emph{finitely strictly singular} (FSS)  if for every
$\vr>0$ there exists $N \in \mathbb{N}$ such that every subspace of dimension at
least $N $ contains a normalized vector $x$ such that $\|Tx\| <\vr$.
$T$ is \emph{strictly singular} (SS) if it is not an isomorphism on any
infinite-dimensional subspace of $X$. $T$ is \emph{strictly cosingular} (SCS) if,
for any infinite-dimensional subspace $Z \subset Y$, the operator $Q_Z T$ is not
surjective ($Q_Z : Y \to Y/Z$ is the quotient map). And it is \emph{inessential} (IN)
or \emph{ Fredholm Perturbation} if $I+UT$ is Fredholm for every operator $U:Y \to X$.
The set of all DP operators from $X$ to $Y$ is denoted by $\iDP(X,Y)$.
The notations $\iFSS$, $\iSS$, and $\iSCS$ have similar meaning.
The classes $\iDP$, $\iFSS$, $\iSS$, $\iSCS$,
and $\iIN$ are closed operator ideals, in the sense of \cite{DJP}. 
For more information, the reader is referred to \cite{Ai}, \cite{Pie}, \cite{Pli}.

It is known that, for any Banach spaces $X$ and $Y$,
\begin{eqnarray*}
\iK(X,Y) \subseteq \iFSS(X,Y) \subseteq \iSS(X,Y) \subseteq \iIN(X,Y),\\
\iK(X,Y) \subseteq \iSCS(X,Y) \subseteq \iIN(X,Y) .
\end{eqnarray*}
In general, these inclusions are proper, and there are no other inclusions
between these classes. However, for specific classes of Banach spaces,
other inclusions, or identities, of these ideals occur. First results of this type
date back to J.~Calkin \cite{Ca}, who (essentially) proved that, if $X$ and $Y$ are
Hilbert spaces, then all these operator ideals coincide.
This was generalized to the spaces of operators on $\ell_p$ ($1 \leq p < \infty$)
and $c_0$ in \cite{GFM}.
A.~Pelczynski \cite{Pe65-I, Pe65-II} proved that, if $Z = C(K)$ ($L_1(\nu)$),
then $\iWK(Z) = \iSS(Z)$ (resp. $\iWK(Z) = \iSCS(Z)$).
Since then, a range of related results were obtained.
A good overview can be found in \cite{Ai}.
Most recently, in \cite{Lef}, ideals of operators on certain subspaces of $C(K)$
were investigated. It was shown, for instance, that, for operator $T$ on the disc
algebra, the following are equivalent: (i) $T$ is finitely strictly singular;
(ii) $T$ is strictly singular; (iii) $T$ is weakly compact.

In this paper, we explore membership in these ideals for operators acting
on non-commutative $L_p$ spaces and Schatten spaces.

To define a non-commutative $L_p$,
let $\A$ be a von Neumann algebra equipped with a faithful normal semifinite trace
$\tau$, and $\tilde{\A}$ be the algebra of all $\tau$-measurable operators affiliated
with $\A$, see \cite{FK}. Then we can define
$L_p(\tau)=\{A\in \tilde{\A}: \tau(|A|^p)< \infty \}$, where
$|A|=(A^*A)^\frac{1}{2}$. It is a Banach space with respect to the norm
$\|A\|=(\tau(|A|^p))^\frac{1}{p}$. For the convenience of notation,
we write $L_\infty(\tau) = \A$.

In this article we are interested in two cases.
First, when $\A$ is hyperfinite (that is, the weak closure of the union of finite
dimensional von Neumann subalgebras) and $\tau$ is finite. In this case
$L_\infty(\tau) \subsetneq L_p(\tau) \subsetneq L_q(\tau)$,
when $1 \leq q<p< \infty$. Second, when  $\A=B(H)$, for a separable Hilbert space
$H$ and $\tau$ is the canonical trace on $B(H)$. In this case the construction
of $L_p(\tau)$ yields the Schatten space
$\cs_p(H)=\{A \in \iK(H): \sum s_i(A)^p < \infty\}$.
Here and below,  $(s_i(A))$ is the sequence of singular values of $A$. We have
$\cs_p(H) \subsetneq  \cs_q(H) \subsetneq \cs_\infty(H) = \iK(H)$
if $1 \leq p<q< \infty$.

The construction of Schatten spaces described here can be generalized: if
$\ce$ is a symmetric sequence space, we  can define $\se(H)$ as the space of all
$A \in \iK(H)$ for which the sequence $(s_i(A))$ belongs to $\ce$, equipped with the norm
$\|A\|_\ce = \|(s_i(A))_{i \in \N}\|_\ce$ (see \cite{GK} or \cite{Si}
for more details).
Then $\cs_p(H)$ is just a shorthand for $\cs_{\ell_p}(H)$.
When $H$ is separable, we use the notation $\se$ and $\cs_p$ for $\se(H)$
and $\cs_p(H)$, respectively.

The spaces described above are equipped
with a positive cone: $L_p(\tau)_+ = L_p(\tau) \cap \tilde{\A}_+$, and
$\se(H)_+ = \se \cap \iK(H)_+$.
In this setting, the (operator theoretic) adjoint of $x \in L_p(\tau)$
(or $x \in \se$) is well defined, and shall be denoted by $x^\star$
($^\star$ is used for the adjoint in
order to distinguish it from $^*$, signifying duality).

The paper is structured as follows. In Section \ref{s:Lp}, we consider
operators on $L_p(\tau)$, where $\tau$ is a faithful normal trace on a finite
hyperfinite algebra. Generalizing \cite{We}, we show that $T\in B(L_p(\tau))$
is not strictly singular if and only if $L_p(\tau)$ contains a subspace $E$,
isomorphic to either $\ell_2$ or $\ell_p$, so that $T|_E$ is an isomorphism,
and both $E$ and $T(E)$ are complemented (Theorem \ref{t:main}).
A similar result is obtained for operators from $L_{p_1}(\tau_1)$
to $L_{p_2}(\tau_2)$ (Theorem \ref{t:main'}).
We also show that, if $1 \leq p_2 \leq p_1 < \infty$, then
$\iSS(L_{p_1}(\tau_1),L_{p_2}(\tau_2))_+ = \iK(L_{p_1}(\tau_1),L_{p_2}(\tau_2))_+$
(Theorem \ref{t:positive}).


In Section \ref{s:lore} we consider operators on Schatten spaces $\se$, where
$\ce = \lore(w,p)$ is a Lorentz space. Here,
$w=(w(k))$ is a non-increasing sequence of positive numbers, so that
$\sum_k w(k) = \infty$, and $\lore(w,p)$ is 
the completion of $c_{00}$ with respect to the norm
$\|(x_i)\| = \big( \sum_k w(k) x_k^{\dagger p} \big)^{1/p}$, where $(x_k^\dagger)$
is the non-increasing rearrangement of $(x_k)$
($\lore(w,p)$ is isomorphic to $\ell_p$ if and only if $\lim_k w(k) > 0$).
We establish an analogue of Theorem \ref{t:main}: $T \in B(\cs_{\lore(w,p)})$
is not strictly singular if and only if there exists a subspace $E$ of
$\cs_{\lore(w,p)}$, isomorphic to either $\ell_2$ or $\ell_p$, so that
$T|_E$ is an isomorphism, and both $E$ and $T(E)$ are complemented
(Theorem \ref{thm:lor SS=IN}).
We also establish a sufficient condition for the sets
$\iK(\cs_{\lore(w_1,p_1)}, \cs_{\lore(w_2,p_2)})$,
$\iFSS(\cs_{\lore(w_1,p_1)}, \cs_{\lore(w_2,p_2)})$, and
$\iSS(\cs_{\lore(w_1,p_1)}, \cs_{\lore(w_2,p_2)})$ to be different
(Proposition \ref{p:FSS not SS}).

In Section \ref{s:Cp}, we restrict our attention to the spaces $\cs_p$.
Sharpening Proposition \ref{p:FSS not SS},
we show that $\iSS(\cs_p,\cs_q)=\iK(\cs_p,\cs_q)$ if $\infty \ge  p\ge 2\ge q \ge 1$,
and otherwise,
$\iSS(\cs_p,\cs_q) \supsetneq \iFSS(\cs_p,\cs_q) \supsetneq \iK(\cs_p,\cs_q)$
(Theorem \ref{t:C_p->C_q}).
Similar coincidence results are established for positive operators
(Theorems \ref{t:C_p->C_q +} and \ref{t:C_p->C_p +}).
Although the dual of a strictly singular operator need not be strictly singular,
Corollary~\ref{c:duality} shows that $T \in \iSS(Z)$ if and only if $T^{*} \in \iSS(Z^*)$
whenever $Z$ is $\cs_p$ ($1 < p \leq \infty$), or $Z = L_p(\tau)$ ($1 < p <\infty$,
$\tau$ is a finite trace on a hyperfinite von Neumann algebra). We show this
result fails when $Z = \cs_1$.
Finally, we prove that, for a Banach space $Z$, $T \in B(\cs_1, Z)$ is Dunford-Pettis
iff its restriction to every copy of $\ell_2$ is compact (Proposition \ref{p:l_2 DP}).

Finally, in Section \ref{s:prod} (Theorem \ref{t:milman}), we show:
if $Z$ is either $\cs_p(H)$ ($1 \leq p \leq \infty$,
$H$ is a Hilbert space), or $L_p(\tau)$ ($1 < p < \infty$, $\tau$
is a normal faithful finite trace on a hyperfinite von Neumann algebra),
and $T, S \in \iSS(Z)$, then $TS \in \iK(Z)$. 

Note that, in the companion paper \cite{OS_Cstar}, we study the same ideals
of operators acting on $C^*$-algebras. Among other things, we prove an
analogue of Theorem \ref{t:main}: an operator on a $C^*$-algebra is strictly
singular if and only if it preserves either a copy of $c_0$, or a copy of $\ell_2$.
We also use inclusion and coincidence of operator ideals to describe algebraic
properties of $C^*$ and von Neumann algebras. Here is a sample result:
for a von Neumann algebra $\A$, $\iFSS(\A)=\iSS(A)=\iIN(\A)=\iWK(\A)$
if and only if $\A$ is of finite type $I$. Otherwise, these classes are different.

Throughout the paper, we adhere to the standard Banach space terminology.
The word ``subspace'' refers to a closed infinite dimensional subspace,
unless specified otherwise. We denote the unit ball of a normed space $X$ by $\ball(X)$.
The term \emph{$\ell_p$-basis} is used as a shorthand for
``a sequence equivalent to the canonical basis of $\ell_p$.''
We say that an operator $T:X \to Y$ \emph{preserves} (or \emph{fixes})
a Banach space $Z$ if
$X$ contains a subspace $Z$, isomorphic to $E$, so that
$T|_E$ is an isomorphism.

\section{Non-commutative $L_p$: continuous case}\label{s:Lp}

The main goal of this section is to describe strictly singular operators
on non-commutative $L_p$ spaces, and then to specialize to positive operators.
In the commutative setting, such a characterization was obtained in \cite{We},
and then generalized in \cite{Ru, Tra}. To achieve our goal, we need to analyze
sequences in $L_p(\tau)$.

\subsection{Unconditional Subsequence Property}

We say that a Banach space has the {\it Unconditional Sequence Property}
({\it USP}) if every weakly null seminormalized sequence contains an unconditional subsequence.

Bellow we will show that a non-commutative $L_p(\tau)$ corresponding to
a hyperfinite von Neumann algebra with a faithful finite normal trace has the USP.

\begin{proposition}\label{p:FDD}
Suppose $\A$, $\tau$, and $p$ are as in Theorem \ref{t:main}.
Then any separable subspace of $L_p(\tau)$ $(1 \le p <\infty)$ is contained in a subspace
with an unconditional FDD. Consequently, if $\A$ is separably acting,
then $L_p(\tau)$ has an unconditional FDD.
\end{proposition}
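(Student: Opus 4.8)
The plan is to surround the given subspace with a non-commutative martingale filtration coming from the hyperfinite structure of $\A$, and to recognize the associated martingale-difference decomposition as the required FDD. Since $\A$ is hyperfinite, it is the weak-$*$ closure of a directed family $(\A_i)$ of finite-dimensional unital $*$-subalgebras; because $\tau$ is finite and normal, standard facts about finite-trace $L_p$-spaces (Kaplansky density, together with the fact that on $\|\cdot\|_\infty$-bounded sets strong convergence forces $\|\cdot\|_p$-convergence) show that $\bigcup_i L_p(\A_i,\tau)$ is $\|\cdot\|_p$-dense in $L_p(\tau)$ for every $1\le p<\infty$. Given a separable subspace $X$, fix a dense sequence $(x_k)$ in it, choose $y_k$ lying in $L_p$ of some finite-dimensional subalgebra with $\|x_k-y_k\|_p<2^{-k}$, and use the directedness of $(\A_i)$ to extract an increasing sequence $\A_1\subseteq\A_2\subseteq\cdots$ of finite-dimensional unital $*$-subalgebras with $y_1,\dots,y_n\in L_p(\A_n,\tau)$ for each $n$. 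Then $X\subseteq Y:=\overline{\bigcup_n L_p(\A_n,\tau)}$, a separable subspace.

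For each $n$ let $\ce_n\colon\A\to\A_n$ be the unique $\tau$-preserving conditional expectation; it is unital, completely positive and trace preserving, so it extends to a norm-one projection $\ce_n$ of $L_p(\tau)$ onto $L_p(\A_n,\tau)$ for every $1\le p<\infty$, the tower identity $\ce_{n-1}\ce_n=\ce_{n-1}=\ce_n\ce_{n-1}$ holds, and $\ce_n y\to y$ in $\|\cdot\|_p$ for every $y\in Y$ (immediate from $\|\ce_n\|=1$ and the density of $\bigcup_n L_p(\A_n,\tau)$ in $Y$). Put $E_1:=L_p(\A_1,\tau)$ and $E_n:=(\ce_n-\ce_{n-1})(L_p(\tau))$ for $n\ge2$; by the tower identity each $\ce_n-\ce_{n-1}$ is an idempotent with finite-dimensional range contained in $Y$, the partial-sum projections of $(E_n)$ are exactly the $\ce_n$, and $\overline{\bigoplus_n E_n}=Y$. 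Thus $(E_n)$ is a monotone FDD of $Y$ --- precisely the martingale-difference FDD associated with the filtration $(\A_n)$.

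It remains to prove that $(E_n)$ is unconditional, i.e.\ that $\sup\bigl\|\sum_n\vr_n(\ce_n-\ce_{n-1})\bigr\|_{B(L_p(\tau))}<\infty$, the supremum taken over all finite sign sequences $(\vr_n)$. For $p=2$ the subspaces $E_n$ are mutually orthogonal, so this holds with constant $1$. For $1<p<\infty$ it is exactly the unconditionality of non-commutative martingale difference sequences in $L_p(\tau)$ --- a theorem of Pisier--Xu and Randrianantoanina, with constant depending only on $p$ --- applied to the filtration $(\A_n)$; this settles the proposition in that range. The corollary follows at once: when $\A$ is separably acting, $L_p(\tau)$ is separable, so taking $X=L_p(\tau)$ forces $Y=L_p(\tau)$, which therefore carries an unconditional FDD. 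I expect the only genuinely delicate point to be the endpoint $p=1$, where the martingale-difference FDD produced above need \emph{not} be unconditional (already for the dyadic filtration of $L_1[0,1]$, by Burkholder's theorem); for $p=1$ one must instead enlarge $X$ using a carefully chosen block sequence along which the difference subspaces behave like a disjointly supported, hence $1$-unconditional, sequence, and this step, rather than the bookkeeping above, is the real obstacle.
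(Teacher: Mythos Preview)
Your argument for $1<p<\infty$ is exactly the paper's: extract from the hyperfinite net an increasing chain $(\A_n)$ of finite-dimensional subalgebras whose $L_p$-closure contains the given separable $X$, and take the martingale-difference decomposition $E_n=(\ce_n-\ce_{n-1})L_p(\tau)$; unconditionality is then the non-commutative martingale-transform theorem (Pisier--Xu, Randrianantoanina), cited in the paper as \cite[Section~7]{PX} and \cite{Ran02}. So for that range you and the paper do the same thing.

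On $p=1$ you are right to flag the failure of martingale unconditionality, but your proposed rescue cannot work. The hypothesis ``$\A$, $\tau$, and $p$ as in Theorem~\ref{t:main}'' already forces $1<p<\infty$; the parenthetical $(1\le p<\infty)$ in the statement is loose, and the paper's proof makes no separate provision for $p=1$. More to the point, the result is simply \emph{false} at $p=1$: already for commutative $\A=L_\infty[0,1]$, the space $L_1[0,1]$ does not embed into any Banach space with an unconditional FDD (it fails Pe\l czy\'nski's property~(u)), so no ``carefully chosen block sequence'' can produce an unconditional FDD containing a generic separable subspace of $L_1(\tau)$. Drop the last sentence and state the result for $1<p<\infty$; then your proof is complete and matches the paper's.
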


\begin{remark}\label{r:separably}
By \cite[Lemma 1.8]{Yama}, for a von Neumann algebra $\A$ with a
normal faithful semifinite trace $\tau$, the following
are equivalent: (i) $\A$ is separably acting, (ii) $\A$ has a separable predual,
and (iii) $L_2(\tau)$ is separable. Consequently, these statements are
equivalent to $L_p(\tau)$ being separable, for any (equivalently, all)
$p \in [1,\infty)$.
\end{remark}

\begin{proof}
If $\A$ is a hyperfinite von Neumann algebra, it contains a net $(\A_\alpha)$
of finite dimensional von Neumann subalgebras, ordered by inclusion, so that
$\A$ is the weak$^*$-closure of $\cup_\alpha \A_\alpha$. The conditional
expectations $Q_\alpha : \A \to \A_\alpha$ are completely contractive, and
satisfy $Q_\alpha Q_\beta = Q_\beta Q_\alpha = Q_\alpha$ whenever
$\alpha \leq \beta$. By \cite[Theorem 3.4]{PiLP}, $Q_\alpha$ extends to a
completely contractive map from $L_p(\tau)$ to $L_p(\tau_\alpha)$, where
$\tau_\alpha$ is the restriction of $\tau$ to $N_\alpha$, and $L_p(\tau)$
is the norm closure of $\cup_\alpha L_p(\tau_\alpha)$.

Now suppose $(x_k)$ is a dense subset of a subspace $X \subset L_p(\tau)$.
Then there exists an increasing sequence $(\alpha_k)$ so that
$\max_{j \leq k} \dist(x_j,\A_{\alpha_k}) < 4^{-k}$ for any $k$.
Now define $\A^\prime$ as the weak$^*$ closure of $\cup_k \A_{\alpha_k}$ in $\A$,
and let $\tau^\prime$ be the restriction of $\tau$ to $\A^\prime$.
As noted in the proof of \cite[Theorem 3.4]{PiLP},
$L_p(\tau^\prime) = \overline{\cup_k L_p(\tau_{\alpha_k})}$, and
this space contains $X$. By \cite[Section 7]{PX} (or \cite{Ran02}),
the subspaces $L_p(\tau_{\alpha_k}) \cap \ker Q_{\alpha_{k-1}}$
form an unconditional FDD.
\end{proof}

Theorem~\ref{p:FDD}, combined with \cite[Theorem~1.g.5]{LT1},
\cite[Proposition~1.a.12]{LT1}, and the fact that every normalized block sequence
of an unconditional basis is unconditional imply the following.

\begin{corollary}\label{c:uncond}
Suppose $\tau$ is a normal faithful semifinite trace on a hyperfinite
von Neumann algebra $\A$. Then, for $1 < p < \infty$, $L_p(\tau)$ has the USP.
\end{corollary}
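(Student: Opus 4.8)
The plan is to deduce Corollary~\ref{c:uncond} from Proposition~\ref{p:FDD} by a routine reduction to the separable case. First I would observe that the USP is a separably determined property: a weakly null seminormalized sequence $(x_k)$ lives in a separable subspace $X_0 = \overline{\span}\{x_k\}$ of $L_p(\tau)$, so it suffices to produce, inside some possibly larger separable subspace, an unconditional subsequence of $(x_k)$. Thus the whole question reduces to separable subspaces of $L_p(\tau)$, and by Proposition~\ref{p:FDD} every such subspace sits inside a subspace $Y$ with an unconditional FDD $(F_n)$.

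Next I would pass from the unconditional FDD to an unconditional basis: by \cite[Theorem~1.g.5]{LT1}, since $Y$ has an unconditional FDD and $1 < p < \infty$ (so $Y$, being a subspace of $L_p(\tau)$, is reflexive, or more directly the FDD is shrinking and boundedly complete — this is where reflexivity of $L_p(\tau)$ for $1<p<\infty$ enters), each finite-dimensional block $F_n$ has a basis, and concatenating these bases yields an unconditional basis $(e_j)$ of $Y$. Now $(x_k)$ is a weakly null seminormalized sequence in $Y$. By the standard gliding-hump / Bessaga--Pe\l czy\'nski argument (\cite[Proposition~1.a.12]{LT1}), a weakly null seminormalized sequence in a space with a basis has a subsequence $(x_{k_i})$ that is a small perturbation of a block basic sequence $(u_i)$ of $(e_j)$, hence equivalent to it.

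Finally I would invoke the quoted fact that every normalized (equivalently, seminormalized) block sequence of an unconditional basis is itself unconditional: the block sequence $(u_i)$ of the unconditional basis $(e_j)$ is unconditional, and since $(x_{k_i})$ is equivalent to $(u_i)$ (the perturbation being small enough, e.g. summable, to preserve equivalence and unconditionality — this is again the principle of small perturbations), the subsequence $(x_{k_i})$ of the original sequence is unconditional. This establishes the USP for $L_p(\tau)$, $1<p<\infty$.

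The only genuine point requiring care — the ``main obstacle,'' such as it is — is the passage from ``unconditional FDD'' to ``unconditional basis'' in the reflexive range $1<p<\infty$: one must check that the hypotheses of \cite[Theorem~1.g.5]{LT1} are met, i.e.\ that the FDD is shrinking and boundedly complete, which follows from reflexivity of $L_p(\tau)$ for $1<p<\infty$ (and it is exactly here that the restriction $p \neq 1$ is used, since $L_1(\tau)$ is not reflexive and need not have the USP). Everything else is the classical Bessaga--Pe\l czy\'nski selection principle together with the elementary observation about block sequences of unconditional bases, both cited in the statement.
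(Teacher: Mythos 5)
Your proposal is the paper's own argument: reduce to a separable subspace, place it inside a subspace with an unconditional FDD via Proposition~\ref{p:FDD}, apply the Bessaga--Pe\l czy\'nski selection principle \cite[Proposition~1.a.12]{LT1}, and conclude by unconditionality of block sequences. One intermediate step is incorrect as written, although the argument does not need it: an unconditional FDD $(F_n)$ does not yield an unconditional basis by concatenating bases of the blocks. The concatenated system need not even be a Schauder basis (the basis constants of the finite-dimensional pieces $F_n$ need not be uniformly bounded), and even when it is, unconditionality of the FDD controls only changes of sign \emph{across} blocks, not within them, so the concatenation is in general merely block-unconditional. The correct route --- and the reason \cite[Theorem~1.g.5]{LT1} is invoked --- is to run the gliding-hump selection at the level of the FDD itself: a seminormalized weakly null sequence has a subsequence which is a small perturbation of a sequence of vectors supported on disjoint consecutive blocks $F_{m_i+1}\oplus\dots\oplus F_{m_{i+1}}$, and such a sequence is unconditional precisely because the FDD is; the small perturbation principle then transfers this to the subsequence. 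A second, minor point: the selection principle applies to any weakly null sequence because the coordinate projections onto the finite-dimensional blocks take weakly null sequences to norm null ones, so no shrinking or bounded completeness is needed, and reflexivity is not really where $1<p<\infty$ enters. The restriction is forced instead by the unconditionality of the martingale-difference FDD underlying Proposition~\ref{p:FDD}, which fails at $p=1$ (as it must, since $L_1(0,1)$ fails the USP).
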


The USP of commutative $L_p$ spaces ($1 < p < \infty$) is well known, and follows from
the unconditionality of the Haar basis. It was proved in \cite{JMS} that $L_1(0,1)$
fails the USP. In the case of non-commutative $L_1$, we have the following.

\begin{proposition}
Let $\tau$ be a normal faithful semifinite trace on a von-Neumann algebra $\A$. Then $L_1(\tau)$ has USP if and only if $A$ is atomic of type I.  
\end{proposition}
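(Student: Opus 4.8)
The plan is to prove both implications separately. For the ``if'' direction, suppose $\A$ is atomic of type I. Then $\A$ is a direct sum (in the $\ell_\infty$ sense) of algebras $B(H_j)$, weighted by the trace, and $L_1(\tau)$ decomposes as an $\ell_1$-sum of the preduals $\cs_1(H_j)$. Since a weakly null seminormalized sequence in an $\ell_1$-sum either (after passing to a subsequence) is essentially supported on a single summand or splits as a block sequence across the summands, it suffices to handle the case of a single summand $\cs_1(H)$ and the case of an $\ell_1$-block sequence. In the latter case the sequence is already $1$-unconditional by the lattice structure of the $\ell_1$-sum. For a single summand $\cs_1(H)$, I would invoke the known fact that $\cs_1$ has the USP: a weakly null seminormalized sequence in $\cs_1(H)$ admits, after passing to a subsequence, an approximation by a block sequence with respect to a fixed pair of orthonormal bases (one for the ``left'' and one for the ``right'' singular vectors), reducing to the unconditionality of such block sequences. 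More precisely, one uses the standard extraction—a Ramsey/gliding-hump argument on singular value decompositions—to find, for each element of a subsequence, a finite-rank approximant whose left and right supports are mutually (almost) disjoint, and these are unconditional because $\cs_1$ restricted to such a block structure is isometric to an $\ell_1$-sum of smaller Schatten classes.

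For the ``only if'' direction, suppose $\A$ is not atomic of type I; I must produce a weakly null seminormalized sequence in $L_1(\tau)$ with no unconditional subsequence. If $\A$ has a continuous (diffuse) part, then that part carries a copy of the commutative $L_1(0,1)$ inside $L_1(\tau)$, complemented by a conditional expectation; since $L_1(0,1)$ fails the USP by \cite{JMS}, so does $L_1(\tau)$—a bad sequence in the subspace remains bad in the whole space, as unconditionality passes to subspaces. If $\A$ has no continuous part but is not atomic of type I, then it has a type $\mathrm{II}_1$ or type $\mathrm{II}_\infty$ or type $\mathrm{III}$ summand, or—more to the point here—an atomic type $\mathrm{II}$ or infinite-dimensional homogeneous part; in any case there is a subalgebra isomorphic to the hyperfinite $\mathrm{II}_1$ factor $R$, and $L_1$ of the hyperfinite $\mathrm{II}_1$ factor is known to contain $L_1(0,1)$ isomorphically (indeed $R$ contains a diffuse abelian subalgebra, whose $L_1$ is $L_1(0,1)$, complemented by the trace-preserving conditional expectation). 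So again the USP fails. The remaining subcase is that $\A$ is atomic but not of type I, meaning it has an atomic type $\mathrm{II}$ or type $\mathrm{III}$ summand—but atoms in a von Neumann algebra are abelian projections, which forces the summand to be type I, so this subcase is in fact vacuous, and ``atomic of type I'' is the correct dichotomy.

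The main obstacle I anticipate is the ``if'' direction: verifying that $\cs_1(H)$ itself has the USP is not entirely formal, since $\cs_1$ is not an unconditional-basis space in the obvious coordinate sense and one must carefully control the interaction between the left and right singular systems of distinct elements of the sequence. The key technical lemma will be a blocking result: given a weakly null seminormalized sequence $(x_k)$ in $\cs_1(H)$, one can pass to a subsequence and perturb (by less than, say, a summable sequence) so that the resulting $(y_k)$ satisfy $y_k = P_k y_k Q_k$ for mutually orthogonal projection sequences $(P_k)$ and $(Q_k)$; such a block-diagonal sequence spans an isometric copy of an $\ell_1$-sum and is therefore $1$-unconditional. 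Establishing this blocking requires combining weak-null-ness (which kills any fixed finite-rank ``corner'') with a diagonal extraction; it is the heart of the argument and where care with the non-commutative structure is essential. The rest—assembling the summands, handling the $\ell_1$-splitting, and the ``only if'' reductions to \cite{JMS}—is routine once this lemma is in place.
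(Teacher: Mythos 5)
Your overall skeleton is the same as the paper's (very short) proof: for the ``only if'' direction, reduce to a complemented copy of $L_1(0,1)$ and quote \cite{JMS}; for the ``if'' direction, use the decomposition $L_1(\tau)=\big(\sum_i \cs_1(H_i)\big)_{\ell_1}$. Your ``only if'' argument is essentially fine (the paper simply cites \cite[Theorem~1.5.3]{OS} for the complemented $L_1(0,1)$; your case analysis lands in the same place, and note that type III summands cannot occur at all under a semifinite trace, so that branch is vacuous). The $\ell_1$-sum assembly can also be shortcut: $\big(\sum_i \cs_1(H_i)\big)_{\ell_1}$ is isometric to the block-diagonal, $1$-complemented subspace of $\cs_1(\oplus_i H_i)$, so everything reduces to the USP of a single $\cs_1(H)$.

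The genuine gap is in what you yourself call the heart of the argument: the blocking lemma for $\cs_1(H)$ is false. You cannot pass to a subsequence of a weakly null seminormalized sequence and perturb it summably to $y_k=P_ky_kQ_k$ with \emph{both} $(P_k)$ and $(Q_k)$ mutually orthogonal. The row sequence $x_k=E_{1k}$ is a counterexample: it is normalized and weakly null (the entries $A_{1k}$ of any $A\in B(\ell_2)=\cs_1^*$ are square-summable), but if $(P_k)$ were mutually orthogonal then $\|P_kE_{1k}Q_k\|_1\leq\|P_ke_1\|_2\to0$ since $\sum_k\|P_ke_1\|_2^2\leq1$, so no such approximation exists. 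This sequence spans an isometric copy of $\ell_2$, not an $\ell_1$-sum of blocks, which shows that the Hilbertian ``row/column'' behaviour is an unavoidable second regime that your purely block-diagonal picture misses. The conclusion you want is nevertheless true, but for a different reason: by the Arazy--Lindenstrauss subsequence dichotomy in $\cs_1$ \cite{AL} (used explicitly by the paper in the proof of Theorem~\ref{t:milman}), a seminormalized weakly null basic sequence in $\cs_1$ has a subsequence equivalent to the unit vector basis of $\ell_2$ (an $\ell_1$-subsequence is excluded because such a sequence cannot be weakly null), and is in particular unconditional. Replacing your blocking lemma by this dichotomy --- or by the finer row/column versus block-diagonal decomposition --- repairs the ``if'' direction.
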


\begin{proof}
If $A$ is not atomic of type I then $L_1(\tau)$ contains a complemented copy of $L_1(0,1)$ by \cite[Theorem~1.5.3]{OS}, and, therefore, it fails the USP.  Otherwise,  $L_1(\tau)$ has the USP since it can be written as $\sum_i (\cs_1(H_i))_{\ell_1}$, where $H_i$ is a Hilbert space.   
\end{proof}

\begin{question}\label{q:USP}
Suppose $\tau$ is a normal faithful semifinite trace on a von Neumann
algebra $\A$, and $1 < p < \infty$. Does $L_p(\tau)$ have the USP?
\end{question}

Note that, in the general case, Theorem~\ref{p:FDD} can no longer be used to
prove that $L_p(\tau)$ has the USP: by \cite[Theorem 2.19]{JNRX},
$L_p(\tau)$ may fail the Approximation Property (hence it cannot have
the unconditional FDD).

\subsection{Characterization of strictly singular operators}

The main result of this subsection is:

\begin{theorem}\label{t:main}
Suppose $1 < p < \infty$, $\tau$ is a faithful normal finite trace on a 
von Neumann algebra $\A$, so that $L_p(\tau)$ has Unconditional Subsequence Property.
For $T \in B(L_p(\tau))$, the following statements
are equivalent: 
\begin{enumerate}
\item
$T$ is not strictly singular.
\item
$L_p(\tau)$ contains a subspace $E$, isomorphic either
to $\ell_p$ or $\ell_2$, so that $T|_E$ is an isomorphism, and both
$E$ and $T(E)$ are complemented.
\end{enumerate}
\end{theorem}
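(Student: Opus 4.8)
The implication $(2)\Rightarrow(1)$ is immediate: if $T|_E$ is an isomorphism onto its image for an infinite-dimensional subspace $E$, then $T$ is not strictly singular by definition. So the whole content is in $(1)\Rightarrow(2)$, and my plan is to upgrade a "bare" fixing of an $\ell_p$- or $\ell_2$-basis into one with complementation on both ends. The first step is therefore to produce, from the failure of strict singularity, a normalized basic sequence $(x_k)$ in $L_p(\tau)$ on which $T$ is bounded below. Since $T$ is not strictly singular there is an infinite-dimensional $Z$ with $T|_Z$ bounded below; passing to a subspace I may assume $Z$ has a basis, and I want to arrange $(x_k)$ to be weakly null (if $L_p(\tau)$ were reflexive, i.e. $1<p<\infty$, any normalized basic sequence has a weakly null subsequence after perturbation, or one extracts directly from a block basis). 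Then I invoke the Unconditional Subsequence Property: after passing to a subsequence, $(x_k)$ is unconditional. The next step is a structural dichotomy for seminormalized weakly null sequences in non-commutative $L_p$: such a sequence has a subsequence equivalent to the $\ell_2$-basis or to the $\ell_p$-basis. This is the Kadec–Pełczyński-type alternative for $L_p(\tau)$ — one separates the "diffuse" part (where the sequence behaves like $\ell_2$, via the inequality relating $\|\cdot\|_p$ and $\|\cdot\|_2$ on sets of bounded $L_\infty$-norm, using finiteness of $\tau$) from the "singular" part (where a disjointification/gliding-hump argument against the $\tau$-density of the supports gives an $\ell_p$-basis). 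After this step I have a subspace $E_0=\overline{\operatorname{span}}(x_k)$ isomorphic to $\ell_2$ or $\ell_p$ with $T|_{E_0}$ an isomorphism.

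The remaining work is to pass to a further subspace $E\subset E_0$ so that both $E$ and $T(E)$ are complemented in $L_p(\tau)$. For $E$: copies of $\ell_2$ and of $\ell_p$ spanned by weakly null unconditional sequences in $L_p(\tau)$ are automatically complemented — this uses that $\ell_p$ and $\ell_2$ are, in the relevant range, the only infinite-dimensional spaces with an unconditional basis that embed "disjointly enough", together with the standard projection constructions (for the $\ell_p$-part, a disjointly supported block sequence is complemented by a conditional-expectation-type projection built from the supports; for the $\ell_2$-part, one uses that $L_p(\tau)$ has type $\min(2,p)$ and cotype $\max(2,p)$ and that a seminormalized unconditional $\ell_2$-sequence in a space with nontrivial type is complemented when it sits inside a subspace with unconditional FDD — this is where Corollary~\ref{c:uncond} and the FDD from Proposition~\ref{p:FDD} enter). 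For $T(E)$: after passing to a subsequence, $(Tx_k)$ is itself seminormalized and weakly null in $L_p(\tau)$, so the same dichotomy and the same complementation results apply to $\overline{\operatorname{span}}(Tx_k)$; a routine simultaneous extraction (gliding hump on both $(x_k)$ and $(Tx_k)$ at once, using the FDD) lets me arrange that $(x_k)$ and $(Tx_k)$ are both equivalent to the same $\ell_r$-basis ($r\in\{2,p\}$), both unconditional, and both spanning complemented subspaces, with $T|_E$ still an isomorphism.

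The main obstacle I anticipate is the simultaneous complementation of $E$ and $T(E)$: individually each is a textbook extraction, but I need one subsequence that works for both, and $(Tx_k)$ lives in a different part of the Kadec–Pełczyński decomposition a priori than $(x_k)$. The fix is to run the disjointification argument on the FDD of Proposition~\ref{p:FDD} simultaneously for the two sequences — block both against the same FDD with a rapidly increasing sequence of blocks so that each of $(x_k)$ and $(Tx_k)$ is, up to small perturbation, a block sequence of the unconditional FDD; a block sequence of an unconditional FDD is unconditional, and a seminormalized block sequence in $L_p(\tau)$ that is a block basis is complemented by the natural block projection. One then checks that the Kadec–Pełczyński alternative for block bases of the FDD forces each of $(x_k)$, $(Tx_k)$ into $\ell_2$ or $\ell_p$, and since $T|_E$ is an isomorphism the two exponents must agree. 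A secondary technical point is the case $p=1$ versus $1<p<\infty$: the theorem is stated for $1<p<\infty$ (where reflexivity gives weak nullity for free and the FDD is available by Corollary~\ref{c:uncond}), so I do not need to worry about $L_1$; the finiteness of $\tau$ is used throughout to guarantee $L_\infty(\tau)\subset L_p(\tau)\subset L_2(\tau)$ (for $p\ge 2$) or the reverse inclusions, which is what powers the Kadec–Pełczyński dichotomy.
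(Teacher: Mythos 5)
There is a genuine gap, and it sits exactly where the theorem is hard: the case $1<p<2$. You invoke a ``Kadec--Pe\l czy\'nski-type alternative'' asserting that every seminormalized weakly null (unconditional) sequence in $L_p(\tau)$ has a subsequence equivalent to the $\ell_2$-basis or the $\ell_p$-basis. That dichotomy is available only for $p>2$; for $1<p<2$ it is false already in the commutative case: a sequence of i.i.d.\ $r$-stable random variables in $L_p(0,1)$ with $p<r<2$ is weakly null, unconditional, and equivalent to the $\ell_r$-basis, so it has no subsequence spanning $\ell_2$ or $\ell_p$. Consequently your subspace $E_0=\overline{\span}(x_k)$ need not be isomorphic to $\ell_2$ or $\ell_p$ at all, and no amount of passing to subsequences of $(x_k)$ will fix this. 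The correct dichotomy for $1<p<2$ (used in the paper, following Raynaud--Xu) is: either $\ball(X)$ fails to be $p$-equiintegrable, in which case $X$ contains a complemented copy of $\ell_p$, or it is $p$-equiintegrable, in which case one must \emph{replace} the sequence $(f_n)\subset X$ by truncations $g_n$ with uniformly bounded $L_\infty$-norm (and subtract off the weak limit of the truncations). The span of $(g_n)$ is generally \emph{not} contained in $X$; one only retains $\|f_n-g_n\|_p<4\vr$ for a fixed small $\vr$, which is enough to keep $\|Tg_n\|$ bounded below pointwise, and then the equivalence of the $L_p$- and $L_2$-norms on uniformly $L_\infty$-bounded sets, together with cotype $2$ and unconditionality, forces both $(g_n)$ and $(Tg_n)$ to be $\ell_2$-bases. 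Your sketch mentions the $\|\cdot\|_p$ versus $\|\cdot\|_2$ inequality but applies it to the original sequence, which is where the argument breaks.

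A second, smaller gap is the complementation of the $\ell_2$-copies when $1<p<2$. Your justification (``an unconditional $\ell_2$-sequence inside a subspace with unconditional FDD in a space with nontrivial type is complemented'') is not a valid principle; Maurey-type extension gives complemented Hilbertian subspaces only under type $2$, i.e.\ $p\ge 2$. The paper instead proves complementation by duality (Proposition~\ref{p:complem}): pick biorthogonal functionals in $L_q(\tau)$ with $q>2$, make them weakly null, apply the genuine Kadec--Pe\l czy\'nski dichotomy there to get a complemented $\ell_2$ in $L_q(\tau)$, rule out the $\ell_q$ alternative by a pairing estimate, and transport the projection back by taking adjoints. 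Your handling of the $\ell_p$ case (blocking against supports / the FDD to get simultaneously complemented $E$ and $T(E)$) is in the right spirit and matches the paper's Case~1, but the $\ell_2$ half of the argument needs to be rebuilt along the lines above.
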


Throughout, we assume $p \neq 2$, and $\tau(\one) = 1$.
The implication $(2) \Rightarrow (1)$ is clear. Proving
$(1) \Rightarrow (2)$ is easy for $2 < p < \infty$, due to
Kadec-Pelczynski dichotomy (see e.g. \cite[Theorem 0.2]{RX}): any
subspace of $L_p(\tau)$ contains a further subspace
$E$, isomorphic to either $\ell_p$ or $\ell_2$, and complemented
in $L_p(\tau)$. In fact, for $2 < p < \infty$ our conclusion remains
true even for any normal faithful semifinite trace $\tau$ on a
von Neumann algebra $\A$ (not necessarily hyperfinite).
Below, we use some ideas from \cite{We} to tackle the case of $1 < p < 2$.

The following lemma can be deduced from Rosenthal's characterization
of $\ell_1$-bases. We present an easy proof for the sake of completeness.

\begin{lemma}\label{l:null}
A seminormalized unconditional sequence in a Banach space is either weakly null,
or contains a subsequence equivalent to $\ell_1$. Consequently, any
 unconditional basic sequence in a reflexive space is weakly null.
\end{lemma}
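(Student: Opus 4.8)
The plan is to prove the contrapositive of the first assertion: if a seminormalized unconditional sequence $(x_n)$ in a Banach space $X$ is not weakly null, then some subsequence is an $\ell_1$-basis. The ``Consequently'' part then follows at once: a subspace of a reflexive space is reflexive, and $\ell_1$ is not reflexive, so a reflexive space contains no $\ell_1$-basis; hence no subsequence of an unconditional basic sequence in a reflexive space can be an $\ell_1$-basis, and the dichotomy forces the whole sequence to be weakly null.

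For the main assertion, suppose $(x_n)$ is not weakly null. First I would fix $f \in X^*$ with $\|f\| \le 1$, a scalar $\delta>0$, and a subsequence $(x_{n_k})$ with $|f(x_{n_k})| \ge \delta$ for all $k$. For each $k$ choose a scalar $\mu_k$ of modulus one with $\mu_k f(x_{n_k}) = |f(x_{n_k})|$, and put $y_k = \mu_k x_{n_k}$. Then $f(y_k) \ge \delta$ is a positive real, $\sup_k \|y_k\| = \sup_k \|x_{n_k}\| =: M < \infty$, and $(y_k)$ is again unconditional with the same constant $C$ (multiplying the terms of an unconditional basic sequence by unimodular scalars, and passing to a subsequence, does not change the unconditionality constant). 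Note $(y_k)$ is an $\ell_1$-basis if and only if $(x_{n_k})$ is.

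The core is the lower $\ell_1$-estimate. Given scalars $(a_k)$, I would pick unimodular scalars $\vr_k$ — signs $\pm1$ in the real case, a fourth root of unity in the complex case — so that $\Re(\vr_k a_k) \ge c_0 |a_k|$, where $c_0 = 1$ (real) or $c_0 = 2^{-1/2}$ (complex, since one of $z,iz,-z,-iz$ has real part at least $|z|\cos(\pi/4)$). Using unconditionality, then $f(y_k)\ge\delta$ and $\|f\|\le 1$,
\[
\Bignorm{\sum_k a_k y_k} \ge C^{-1} \Bignorm{\sum_k \vr_k a_k y_k} \ge C^{-1} \Re f\Bigl(\sum_k \vr_k a_k y_k\Bigr) = C^{-1} \sum_k \Re(\vr_k a_k)\, f(y_k) \ge \frac{c_0 \delta}{C} \sum_k |a_k| .
\]
Combined with the trivial bound $\norm{\sum_k a_k y_k} \le M \sum_k |a_k|$, this shows $(y_k)$, hence $(x_{n_k})$, is equivalent to the unit vector basis of $\ell_1$.

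I do not expect a genuine obstacle here; the only points needing a little care are the passage to the complex scalar field, where $\sgn$ is unavailable and is replaced by the fourth-root-of-unity rotation above, and the bookkeeping of the unconditional constant under $x_{n_k}\mapsto \mu_k x_{n_k}$. An alternative, slightly longer route would invoke Rosenthal's $\ell_1$ theorem: a bounded non-weakly-null sequence has either an $\ell_1$-subsequence (done) or a weak-Cauchy subsequence, and in the latter case a gliding-hump argument on the differences, together with unconditionality, again produces an $\ell_1$-subsequence; but the self-contained computation is shorter and is the one I would present.
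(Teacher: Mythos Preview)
Your proposal is correct and follows essentially the same route as the paper: pass to a subsequence on which a fixed functional is bounded away from zero, then use unconditionality with unimodular multipliers to extract the lower $\ell_1$-estimate. The only cosmetic difference is that the paper handles the complex case in one stroke by taking the unimodular multiplier $\omega_n = \sgn(\alpha_n)\,\overline{\sgn(x^*(x_n))}$ (so that $\alpha_n \omega_n x^*(x_n) = |\alpha_n|\,|x^*(x_n)|$ exactly), whereas you split this into two rotations and restrict to fourth roots of unity at the cost of a harmless factor $2^{-1/2}$; since unconditionality already passes from signs to arbitrary unimodular scalars (up to a constant), your extra care there is not needed, but it does no harm.
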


\begin{proof}
Suppose a normalized sequence $(x_n)$, with an unconditional
constant $C$, is not weakly null.
Passing to a subsequence, we find a norm one $x^* \in X^*$ so that
$|x^*(x_n)| > c > 0$ for every $n$.
For any finite sequence $(\alpha_n)$  let $\omega_n = \sgn (\alpha_n)\frac{|x^*(x_n)|}{x^*(x_n)}$. Then
$$  \eqalign{
\sum_n |\alpha_n| \geq \|\sum_n \alpha_n x_n\|
&
 \geq
 C^{-1} \|\sum_n \alpha_n \omega_n x_n\|
\cr
&
 \ge
 C^{-1} |x^*( \sum_n \alpha_n \omega_n x_n)| \geq
 c C^{-1} \sum_n |\alpha_n| .
}  $$
Thus, $(x_n)$ is equivalent to the $\ell_1$-basis.
\end{proof}

\begin{proposition}\label{p:complem}
If $1 < p < 2$, then any sequence in $L_p(\tau)$, equivalent to the
$\ell_2$-basis, has a subsequence whose linear span is complemented.
\end{proposition}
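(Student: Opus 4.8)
The plan is to use the unconditional FDD structure from Proposition~\ref{p:FDD} together with the Kadec--Pe\l{}czy\'nski phenomenon in non-commutative $L_p$ for $1<p<2$, namely that an $\ell_2$-subspace of $L_p(\tau)$ ``lives'' (after a small perturbation) in a single $\ell_2$-complemented piece, on which the $L_p$-norm and the $L_2$-norm are comparable. First I would reduce to the separable case: the closed span of a sequence equivalent to the $\ell_2$-basis is separable, so by Proposition~\ref{p:FDD} it sits inside a subspace $L_p(\tau')$ of $L_p(\tau)$ with an unconditional FDD $(F_k)$ coming from $L_p(\tau_{\alpha_k})\cap\ker Q_{\alpha_{k-1}}$, and $L_p(\tau')$ is norm-one complemented in $L_p(\tau)$ via the trace-preserving conditional expectation; hence it suffices to find the complemented subsequence inside $L_p(\tau')$.

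Next I would run a gliding-hump argument against this FDD. Given $(x_n)$ equivalent to the $\ell_2$-basis, after passing to a subsequence and making an arbitrarily small perturbation I may assume $(x_n)$ is a block sequence of the FDD $(F_k)$, hence itself $1$-unconditional up to a constant, and still equivalent to the $\ell_2$-basis. The key dichotomy: either $\inf_n\|x_n\|_2>0$ relative to the ambient $L_2(\tau)$-norm (suitably normalized), in which case $(x_n)$ spans a subspace on which $\|\cdot\|_p\sim\|\cdot\|_2$, or $\|x_n\|_2\to0$, which I would like to exclude because it is incompatible with $(x_n)$ being an $\ell_2$-basis in $L_p$ for $p<2$ --- passing to the point where $\|x_n\|_p=1$ while $\|x_n\|_2\to0$ forces, via interpolation and unconditionality, an $\ell_p$- or $c_0$-type behaviour rather than $\ell_2$. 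In the surviving case one invokes the Kadec--Pe\l{}czy\'nski-type result for non-commutative $L_p$ (\cite[Theorem~0.2]{RX} or the results in \cite{RX}): a subspace of $L_p(\tau)$ on which the $L_p$- and $L_2$-norms are equivalent embeds into $L_p(\tau)$ as a complemented Hilbertian subspace, with the orthogonal projection from $L_2$ restricting to a bounded projection in $L_p$.

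Concretely, on the closed span $E=\overline{\span}(x_n)$ one has $\|\cdot\|_2\le\|\cdot\|_p\le C\|\cdot\|_2$; let $P$ be the $L_2(\tau)$-orthogonal projection onto $E$ (well-defined since $E$ is closed in $L_2(\tau)$ too). I would show $P$ maps $L_p(\tau)$ boundedly into $E\subset L_p(\tau)$: for $1<p<2$ this follows by duality from the $p'>2$ case, where the analogous projection is bounded on $L_{p'}(\tau)$ by Kadec--Pe\l{}czy\'nski together with the fact that $E$ is already complemented in $L_{p'}$, and one checks $P$ is self-adjoint with respect to the trace pairing so $P^*=P$ acts on $L_p$. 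This yields the desired bounded projection of $L_p(\tau')$, and hence of $L_p(\tau)$, onto a subsequence span.

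The main obstacle I anticipate is the second step: ruling out (or handling) the case where the blocks have $L_2$-norm tending to zero after $L_p$-normalization. In the commutative setting this is the classical Kadec--Pe\l{}czy\'nski alternative and the ``small $L_2$-mass'' branch produces an $\ell_p$-basis; here one must make sure the non-commutative version (as in \cite{RX}) is genuinely available in the generality of a faithful finite trace with the USP, and that the unconditionality supplied by Proposition~\ref{p:FDD} is enough to transfer the dichotomy to arbitrary (not merely block) $\ell_2$-bases via a perturbation. A secondary technical point is verifying that the $L_2$-orthogonal projection onto $E$ is bounded on $L_p(\tau)$ rather than merely on $L_2(\tau)$; the cleanest route is the duality/self-adjointness argument sketched above, reducing it to the already-understood range $2<p'<\infty$ where complementation of Hilbertian subspaces of $L_{p'}(\tau)$ is standard.
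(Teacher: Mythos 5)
There are two gaps here, and the second one is fatal to the strategy. First, your ``key dichotomy'' is the Kadec--Pe\l{}czy\'nski criterion for $p>2$, not for $p<2$: with a normalized finite trace and $1<p<2$ one has $\|x\|_p\le\|x\|_2$, so an $L_p$-normalized sequence automatically satisfies $\|x_n\|_2\ge 1$ and the branch $\|x_n\|_2\to 0$ never occurs; conversely, $\inf_n\|x_n\|_2>0$ gives no upper bound of $\|\cdot\|_2$ by $\|\cdot\|_p$ on the span, so the conclusion $\|\cdot\|_p\sim\|\cdot\|_2$ on $E$ does not follow. (For $p<2$ the relevant dichotomy is phrased in terms of $p$-equiintegrability, as in the proof of Theorem \ref{t:main}; in any case the hypothesis already hands you the $\ell_2$-equivalence, so the entire content of the proposition is the complementation, not the isomorphic type.)

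Second, the duality step is circular. To get the $L_2$-orthogonal projection $P$ onto $E$ bounded on $L_p$ you need $P$ bounded on $L_q$, $q=p'>2$, and that amounts to an upper $2$-estimate for $(x_n)$ in the $L_q$-norm, i.e.\ to the span of $(x_n)$ being a Hilbertian (hence complemented) subspace of $L_q$. This is exactly the kind of statement you are trying to prove, and it is not automatic even for a uniformly bounded $L_2$-orthonormal system: in commutative $L_q(0,2\pi)$ the functions $x_n=\sqrt2\cos(n\,\cdot)$ satisfy $\bigl\|\sum_{n\le N}x_n\bigr\|_{q}\sim N^{1-1/q}\gg N^{1/2}$ for $q>2$, so the orthogonal projection onto their span is unbounded on $L_q$, and nothing in your argument excludes this behaviour for the given $(x_n)$ (which a priori need not even lie in $L_q$). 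The paper sidesteps this by moving to the dual side from the start: it takes Hahn--Banach extensions $(y_n)\subset L_q(\tau)$ of the coordinate functionals, subtracts a weak limit to obtain a weakly null biorthogonal sequence $(z_n)$, applies the non-commutative Kadec--Pe\l{}czy\'nski dichotomy \emph{in $L_q$} (where $q>2$, so it is available) to make $(z_n)$ equivalent to the $\ell_2$- or $\ell_q$-basis with complemented span, rules out the $\ell_q$ alternative by pairing $\sum_{i\le m}z_i$ against $m^{-1/2}\sum_{i\le m}x_i$, and only then dualizes: the resulting projection $P$ on $L_q$ has $P^*|_X$ an isomorphism, and $U^{-1}P^*$ is the desired projection onto $X=\overline{\span}[x_n]$. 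The missing idea in your proposal is precisely this passage through a weakly null biorthogonal system in $L_q$, which is what replaces the unavailable Khintchine-type upper estimate.
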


\begin{proof}
Suppose $(x_n)$ is a sequence, equivalent to the $\ell_2$-basis.
By the Hahn-Banach Theorem, $L_q(\tau)$ (here, as before, $1/p + 1/q = 1$)
contains a bounded seminormalized sequence $(y_n)$. By passing to a subsequence,
we may assume $y_n \to y$ weakly. Note that, for any $n$,
$y(x_n) = \lim_m y_m(x_n) = 0$, hence the sequence $z_n = y_n - y$
is weakly null, and biorthogonal to $(x_n)$. By passing to a further
subsequence, and using the non-commutative Kadec-Pelczynski dichotomy
\cite[Theorem 5.4]{RX}, we assume that $(z_n)$ is equivalent either
to the $\ell_2$-basis, or to the $\ell_q$-basis, and complemented.
The latter, however, is impossible. Indeed, suppose there exists a constant
$C$ so that, for every sequence $(\alpha_i)$,
$C^{-1} (\sum_i |\alpha_i|^2)^{1/2} \leq \|\sum_i \alpha_i x_i\| \leq
C (\sum_i |\alpha_i|^2)^{1/2}$, and 
$C^{-1} (\sum_i |\alpha_i|^q)^{1/q} \leq \|\sum_i \alpha_i z_i\| \leq
C (\sum_i |\alpha_i|^q)^{1/q}$. In particular, for any $m$,
\begin{eqnarray*}
C m^{1/q} &\ge& \|\sum_{i=1}^m z_i\| = \sup_{\|x\|_p \leq 1} |(\sum_{i=1}^m z_i)(x)|\\
                   &\geq& |(\sum_{i=1}^m z_i)(C^{-1} m^{-1/2} \sum_{i=1}^m x_i)| = C^{-1}m^{1/2},
\end{eqnarray*}
which fails for sufficiently large values of $m$.

Thus, $(z_n)$ is equivalent to the $\ell_2$-basis, and there exists a
projection $P$ from $L_q(\tau)$ onto $Z = \span[z_n : n \in \N]$.
Note that the restriction of $P^*$ onto $X = \span[x_n : n \in \N]$ is an isomorphism.
Indeed, for any sequence $(\alpha_n) \in \ell_2$, we have
$\|\sum_n \alpha_n x_n\| \sim (\sum_n |\alpha_n|^2)^{1/2}$.
Furthermore, let $z = (\sum_n |\alpha_n|^2)^{-1/2}
 \sum_n \overline{\alpha_n} z_n$. Then $Pz = z$, and
$\|z\| \lesssim 1$, hence
$$
\|P^* (\sum_n \alpha_n x_n)\| \gtrsim \|(P^* (\sum_n \alpha_n x_n))(z)\| =
\|(\sum_n \alpha_n x_n)(Pz)\| \gtrsim (\sum_n |\alpha_n|^2)^{1/2} .
$$
To complete the proof, note that $U^{-1} P^*$ is a bounded projection onto
$X$ where by $U$ we denoted the restriction of $P^*$ onto $X = \span[x_n : n \in \N]$,
viewed as an operator into $\ran P^*$.
\end{proof}

Suppose $\tau$ is a normal faithful semifinite trace on a von
Neumann algebra $\A$. We say that $K \subset L_p(\tau)$ is
\emph{$p$-equiintegrable} if
$\lim_\alpha \sup_{h \in K} \|e_\alpha h e_\alpha\|_p = 0$
for every net of projections $(e_\alpha)$, converging (weakly) to $0$
(see e.g. \cite[Section II.2]{Tak} for a discussion on various modes
of convergence). By \cite[Section 4]{RX}, the following are equivalent:
\begin{enumerate}
\item
$K$ is $p$-equiintegrable.
\item
$\lim_n \sup_{h \in K} \|e_n h e_n\|_p = 0$
for every sequence of projections $(e_n)$, converging (weakly) to $0$.
\item
$\lim_\alpha \sup_{h \in K} \|x_\alpha h y_\alpha\|_p = 0$
if the nets of positive operators $(x_\alpha)$ and $(y_\alpha)$
converge to $0$ weak$^*$.
\end{enumerate}

The following result seems to be folklore.

\begin{lemma}\label{l:equi}
Suppose $K$ is $p$-equiintegrable. Then for every $\vr > 0$ there
exists $\delta > 0$ such that $\sup_{f \in K} \max\{\|e f\|, \|f e\|\} < \vr$
whenever $e$ is a projection of trace not exceeding $\delta$.
\end{lemma}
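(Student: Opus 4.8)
The plan is to argue by contradiction. Suppose the conclusion fails for some $\vr>0$; then for every $n$ there are a projection $e_n$ with $0<\tau(e_n)\le 2^{-n}$ and an element $f_n\in K$ with $\max\{\|e_nf_n\|_p,\|f_ne_n\|_p\}\ge\vr$. Since $\|f_ne_n\|_p=\|e_nf_n^\star\|_p$, and since $K^\star=\{h^\star:h\in K\}$ is again $p$-equiintegrable — the equivalent condition (3) above is symmetric under $h\mapsto h^\star$ combined with interchanging the two multipliers — we may replace $K$ by $K\cup K^\star$ and assume throughout that $f_n\in K$ and $\|e_nf_n\|_p\ge\vr$. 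Since equiintegrable sets are bounded, $M:=\sup_{h\in K}\|h\|_p<\infty$.

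The heart of the argument is to convert this one-sided estimate into a two-sided one, to which equiintegrability applies, by truncating $f_n$ on the unconstrained (right-hand) side using the spectral projections of its modulus. For $t>0$ put $s_n^{(t)}=\chi_{(t,\infty)}(|f_n|)$; Markov's inequality gives $\tau(s_n^{(t)})\le t^{-p}\|f_n\|_p^p\le (M/t)^p$, while $\|f_n(\one-s_n^{(t)})\|_\infty\le t$ because $\one-s_n^{(t)}$ commutes with $|f_n|$ and $|f_n|(\one-s_n^{(t)})$ has spectrum in $[0,t]$. Hence, using the H\"older-type bound $\|ab\|_p\le\|a\|_p\|b\|_\infty$ together with $\|e_n\|_p=\tau(e_n)^{1/p}$,
\[
\|e_nf_n(\one-s_n^{(t)})\|_p\le \tau(e_n)^{1/p}\,t,\qquad\text{so}\qquad \|e_nf_ns_n^{(t)}\|_p\ge\vr-\tau(e_n)^{1/p}\,t .
\]
Now fix $t_n\to\infty$ with $t_n\tau(e_n)^{1/p}\to0$ (possible because $0<\tau(e_n)\to0$; e.g.\ $t_n=\tau(e_n)^{-1/(2p)}$) and set $s_n:=s_n^{(t_n)}$. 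Then $\tau(s_n)\le(M/t_n)^p\to0$, while $\|e_nf_ns_n\|_p\ge\vr-o(1)$.

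It remains to feed this into the equiintegrability criterion. Both $(e_n)$ and $(s_n)$ are sequences of positive elements of $\A$ with trace tending to $0$, hence they converge to $0$ weak$^*$ in $\A=(L_1(\tau))^*$: for $g\in\A$ with $\tau(|g|)<\infty$ one has $|\tau(e_ng)|\le\|g\|_\infty\tau(e_n)\to0$, and since such $g$ are norm-dense in $L_1(\tau)$ and $\|e_n\|_\infty\le1$, this forces $\tau(e_ng)\to0$ for all $g\in L_1(\tau)$; likewise for $s_n$. Applying equivalent condition (3) for the $p$-equiintegrable set $K\cup K^\star$ to the nets $(e_n)$ and $(s_n)$ gives $\sup_{h\in K\cup K^\star}\|e_nhs_n\|_p\to0$; but $f_n\in K\cup K^\star$ and $\|e_nf_ns_n\|_p\ge\vr-o(1)$, a contradiction.

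The step I expect to be the main obstacle is precisely this conversion from a one-sided to a two-sided estimate: equiintegrability controls only the sandwiched quantities $\|xhy\|_p$ in which \emph{both} multipliers are weak$^*$-small, whereas the estimate to be refuted leaves $f\in K$ acting unrestrictedly on one side. In the genuinely semifinite case the right support projection of $f$ may carry infinite trace, so it cannot simply be reinserted; truncating by $s_n^{(t_n)}$ replaces it by a projection of arbitrarily small trace, at the cost of an error $\tau(e_n)^{1/p}t_n$ that is negligible only because $\tau(e_n)$ is already tiny — this is what lets $t_n$ run off to infinity. (When $\tau$ is finite one may take $s_n=\one$ and skip the truncation altogether.)
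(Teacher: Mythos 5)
Your argument is correct, but it resolves the central difficulty --- converting the one-sided quantity $\|ef\|_p$ into a two-sided one to which equiintegrability applies --- in a genuinely different way from the paper. The paper's proof is direct rather than by contradiction: given $e$ with $\tau(e)<\delta$ and $f\in K$, it takes $e'$ to be the range projection of $fe$, notes that $\tau(e')\le\tau(e)$, sets $r=e\vee e'$ (so $\tau(r)<2\delta$ by Kaplansky's formula), and observes $fe=e'(rfr)e$, so $\|fe\|_p\le\|rfr\|_p$; no truncation of $f$ is needed, and nothing about $K$ beyond the two-sided condition is used. You instead leave $e$ alone and shrink the other side of $f$ itself, via the spectral projection $s^{(t)}=\chi_{(t,\infty)}(|f|)$: Chebyshev makes $\tau(s^{(t)})$ small for large $t$, while the discarded piece is controlled by $\tau(e)^{1/p}t$, and the window $\|f\|_p\ll t\ll\tau(e)^{-1/p}$ is nonempty precisely because $\tau(e)$ is already tiny. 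Both routes also need the preliminary fact that projections of small trace tend to $0$ weak$^*$; your density argument in $(L_1(\tau))^*$ is a clean alternative to the paper's singular-value computation. The one caveat in your version is the step ``equiintegrable sets are bounded'': boundedness is not literally among the conditions as restated in the paper (and can fail in degenerate cases where the lemma is vacuous), though it is part of the Raynaud--Xu framework and holds for every set the lemma is applied to; your choice of $t_n$ genuinely uses $\sup_{h\in K}\|h\|_p<\infty$, whereas the range-projection argument does not. In exchange, your reduction to $K\cup K^\star$ via $\|fe\|_p=\|ef^\star\|_p$ is a nice way to halve the work, and the overall scheme is a faithful noncommutative analogue of the classical ``truncate at level $t$ and apply Chebyshev'' proof of the small-sets form of uniform integrability.
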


\begin{remark}\label{r:equi}
If $\tau$ is finite, then a sequence of projections $(e_n)$ converges
weakly to $0$ if and only if $\lim \tau(e_n) = 0$. In this setting, the above
lemma shows that $K$ is $p$-equiintegrable if and only if it is
(in the terminology of \cite{RX}) $K$ is $p$-biequiintegrable.
If $\tau$ is not finite, $p$-equiintegrability need not imply
$p$-biequiintegrability.
\end{remark}

\begin{proof}
Note that, if $(e_n)$ is a sequence of projections, and $\lim_n \tau(e_n) = 0$,
then $e_n \to 0$ weak$^*$. Indeed, otherwise, by passing to a subsequence,
we can find $x \in L_1(\tau)$ and $c > 0$, so that $|\tau(x e_n)| > c$
for any $n$. By polarization, we can assume that $x \geq 0$. Denote by
$\mu_x(t)$ the generalized singular value function of $x$. Then
(see e.g. \cite{DDdP93}) $\|x\|_1 = \int \mu_x(t) \, dt$, and
$\tau(x e_n) \leq \int_0^{\tau(e_n)} \mu_x(t) \, dt$. The latter
converges to $0$, leading to a contradiction.

Find $\delta$ so that $\sup_{f \in K}\|r f r\| < \vr/4$ whenever
$r$ is a projection with $\tau(r) < 2 \delta$.
If $\tau(e) < \delta$, denote by $e^\prime$ the range  projection
of $f e$. Clearly, $\tau(e^\prime) \leq \tau(e)$. Let $r = e \vee e^\prime$.
Then $\tau(r) \leq \tau(e) + \tau(e^\prime) < 2\delta$, hence
$\|r f r\| < \vr$ for $f \in K$. However,
$f e = e^\prime f e = e^\prime (r f r) e$, hence
$\|f e\| \leq \|r f r\| < \vr$. An estimate for $\|e f\|$
is obtained similarly.
\end{proof}

\begin{proof}[Proof of Theorem \ref{t:main}]
It remains to establish (1) $\Rightarrow$ (2) for $1 < p < 2$.
Suppose $T \in B(L_p(\tau))$ is not strictly singular -- that is,
it fixes 
a subspace $X$. To show that $T$
is an isomorphism on $E \subset X$, so that $E$ is isomorphic to either $\ell_p$
or $\ell_2$, and both $E$ and $T(E)$ are complemented, we consider two cases
separately: (1) $\ball(X)$ is not $p$-equiintegrable (then $E \sim \ell_p$);
(2) $\ball(X)$ is $p$-equiintegrable (then $E \sim \ell_2$).

\begin{case}
 Let $\ball(X)$ be not $p$-equiintegrable.
By \cite[Theorem 5.1]{RX}, $X$ contains a complemented subspace $Y$,
isomorphic to $\ell_p$. Denote by $(f_n)$ an $\ell_p$-basis in $Y$.
$(Tf_n)$ is an $\ell_p$-basic sequence, hence, by \cite[Theorem 5.1]{RX}
again (or by \cite{Ran03}), there exists a normalized block sequence
$h_k = \sum_{j \in I_k} \alpha_j T f_j$, whose linear span is complemented
in $L_p(\tau)$. By \cite[Proposition 2.a.1]{LT1}, the linear span of
the vectors $g_k = T^{-1} h_k = \sum_{j \in I_k} \alpha_j f_j$ is complemented in
$\span[f_n : n \in \N]$, hence also in $L_p(\tau)$.
\end{case}

\begin{case} Suppose $\ball(X)$ is $p$-equiintegrable. By Corollary \ref{c:uncond},
$X$ contains a normalized unconditional basic sequence $(f_n)$. We shall use
$f_n$'s to produce the following sequence $(g_n)$:

\begin{enumerate}
\item
$\|g_n\|_p \in [1/2,2]$.
\item
$\sup_n \|g_n\|_\infty < \infty$.
\item
The sequence $(g_n)$ is weakly null in both $L_p(\tau)$ and $L_2(\tau)$.
\item
$(g_n)$ is equivalent to an orthonormal basis in $\ell_2$, in
both $L_p(\tau)$ and $L_2(\tau)$.
\item
$(T g_n)$ is equivalent to the $\ell_2$-basis.
\end{enumerate}

Without loss of generality, we assume $\|f_n\|_p = 1$ for every $n$.
Set $c = \inf_n \|T f_n\|$, and fix $\vr \in (0,\min\{1/40,c/(8\|T\|)\})$.
The sequence $(f_n)$ is $p$-equiintegrable, hence
there exists $\delta > 0$ such that $\max\{\|ef_n\|_p, \|f_ne\|_p\} < \vr$
whenever $e$ is a projection of trace not exceeding $\delta$.
Let $M = \delta^{-1} + 1$. Write $f_n = u_n |f_n|$, where $u_n$ is a
partial isometry from $(\ker f_n)^\perp$ onto $\overline{\ran f_n}$.
Let $\phi(t) = \left\{
 \begin{array}{ll}  t & t \leq M  \\  0 & t > M \end{array} \right.$,
and $\tilde{f}_n = u_n \phi(|f_n|)$.  Note that $\|f_n - \tilde{f}_n\|_p < \vr$.
Indeed, consider the spectral projection $r_n = \chi_{(M,\infty)}(|f_n|)$. Then
$1 = \| |f_n| \|_p^p \geq M^p \tau(r_n)$, hence $\tau(r_n) < \delta$.
By our choice of $\delta$, $\|f_n r_n\|_p = \| |f_n| r_n\|_p < \vr$.
Therefore, the $L_p(\tau)$ norm of
$f_n - \tilde{f}_n = u_n (|f_n| - \phi(|f_n|)) = u_n |f|_n r_n$
is less than $\vr$.

Passing to a subsequence, we can assume $\tilde{f}_n \to f$ weakly.
Then $\tilde{f}_n - f_n \to f$ weakly as well, hence
$\|f\| \leq \liminf \|\tilde{f}_n - f_n\| < \vr$.
Spectral calculus allows us to pick projections $q_1$ and $q_2$,
so that $\tau(q_1^\perp), \tau(q_2^\perp) < \delta$,
$f = q_1 f q_2 + q_1^\perp f q_2^\perp$, and
$N = \|q_1 f q_2\| < \infty$. Let $g_n = q_1(\tilde{f}_n - f)q_2$.
Then 
$$
f_n - g_n = q_1^\perp f_n + q_1 f_n q_2^\perp +
q_1(f - \tilde{f}_n)q_2 + q_1 f q_2 .
$$
The fact that $\tau(q_1^\perp), \tau(q_2^\perp) < \delta$ leads to
$$
\|f_n - g_n\|_p \leq \|q_1^\perp f_n\|_p + \|q_1 f_n q_2^\perp\|_p +
\|q_1(f - \tilde{f}_n)q_2\|_p + \|q_1 f q_2\|_p < 4 \vr ,
$$
and therefore, $\|g_n\|_p \subset [1-4\vr,1+4\vr] \subset [1/2,2]$.
We also have $\|g_n\|_\infty \leq \|\tilde{f}_n\| + \|q_1 f q_2\| \leq M+N$.
By H\"older's Inequality,
$$
\frac{1}{2} \leq \|g_n\|_p \leq
\|g_n\|_2 \leq \|g_n\|_p^{p/2} \|g_n\|_\infty^{1-p/2} \leq 2^\frac{p}{2}( M+N)^{1-\frac{p}{2}} . 
$$

Note that $g_n \to 0$ weakly in $L_p(\tau)$. That is, for any $x^* \in L_q(\tau)$
($1/p + 1/q = 1$), $\lim_n x^*(g_n) = 0$. As
$L_2(\tau) = \overline{L_q(\tau) \cap L_2(\tau)}^{\| \cdot \|_2}$,
$g_n \to 0$ weakly in $L_2(\tau)$ as well. Therefore, by passing to a subsequence
several times, and applying Proposition \ref{p:FDD}, we can assume that
the sequence $(g_n)$ is unconditional, both in $L_p(\tau)$ and in $L_2(\tau)$.
In $L_2(\tau)$, $(g_n)$ is equivalent to an orthonormal basis, hence,
for any finite sequence of scalars $(\alpha_n)$,
\begin{equation}
\|\sum_n \alpha_n g_n\|_p \leq \|\sum_n \alpha_n g_n\|_2 \in
\Big[ C_1 \big( \sum_i |\alpha_i|^2 \big)^{1/2} ,
C_2 \big( \sum_i |\alpha_i|^2 \big)^{1/2} \Big]
\label{eq:equiv l2}
\end{equation}
($C_1$ and $C_2$ are positive constants). On the other hand, 
by unconditionality, there exists
a constant $C_3$ so that, for any sequence $(\alpha_n)$,
$\|\sum_n \alpha_n g_n\|_p \geq C_3 \ave_\pm \|\sum_n \pm \alpha_n g_n\|_p$
(we are averaging over all possible signs). However, $L_p(\tau)$ has cotype $2$,
hence
$$
\ave_\pm \|\sum_n \pm \alpha_n g_n\|_p \geq
 2 C_4 \big(\sum_n |\alpha_n|^2 \|g_n\|_p^2\big)^{1/2} \geq
 C_4 \big(\sum_n |\alpha_n|^2\big)^{1/2} ,
$$
for some $C_4$, and therefore, $\|\sum_n \alpha_n g_n\|_p \geq C_3 C_4
\big(\sum_n |\alpha_n|^2\big)^{1/2}$. Together with \eqref{eq:equiv l2},
this shows that $(g_n) \subset L_p(\tau)$ is equivalent to the $\ell_2$-basis.

Next show that the sequence $(T g_n)$ is equivalent to the $\ell_2$-basis.
We have
$$
\|T g_n\|_p \geq \|T f_n\|_p - \|T\| \|f_n - g_n\|_p \geq
c - 4 \vr \|T\| > \frac{c}{2} .
$$
The sequence $(T g_n) \subset L_p(\tau)$ is weakly null, hence,
by passing to a further subsequence, we can assume it is unconditional.
Combining unconditionality with the cotype $2$ property of $L_p(\tau)$, we
conclude the existence of a constant $C_5$ so that, for any finite sequence
of scalars $(\alpha_n)$, $\|\sum_n \alpha_n T g_n\| \geq C_5 (\sum_n |\alpha_n|^2)^{1/2}$.
On the other hand, $\|\sum_n \alpha_n T g_n\| \leq \|T\| \|\sum_n \alpha_n g_n\|
 \leq C_6 (\sum_n |\alpha_n|^2)^{1/2}$, for some constant $C_6$.

By Proposition \ref{p:complem},
we can assume that $T$ is an isomorphism on a complemented subspace
$Y$, isomorphic to $\ell_2$. Using Proposition \ref{p:complem} again,
we can assume that $T(Y)$ is complemented as well.
\end{case}
\end{proof}

The proof of Theorem~\ref{t:main} can be modified to yield the following:

\begin{theorem}\label{t:main'}
Suppose $\tau_1, \tau_2$ are finite traces on hyperfinite von Neumann algebras,
and $p_1, p_2 \in (1, \infty)$ are distinct, with either $p_1 \geq 2$,
or $p_2 > p_1$. Then, for $T$ in $B(L_{p_1}(\tau_1), L_{p_2}(\tau_2))$, the
following statements are equivalent: $(i)$  $T$ is not strictly singular;
$(ii)$ $T$ is an isomorphism on $E$, where $E$  is isomorphic to $\ell_2$,
and both $E$ and $T(E)$ are complemented.
\end{theorem}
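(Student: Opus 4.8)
The plan is to re-run the proof of Theorem~\ref{t:main}, tracking the changes caused by letting the source and target non-commutative $L_p$-spaces differ. The implication $(ii)\Rightarrow(i)$ is immediate, so assume $T\in B(L_{p_1}(\tau_1),L_{p_2}(\tau_2))$ fixes a subspace $X\subset L_{p_1}(\tau_1)$; we must find $E\subset X$, isomorphic to $\ell_2$, with $E$ and $T(E)$ complemented.

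The one genuinely new point is that the ``$\ell_{p_1}$-alternative'' of Theorem~\ref{t:main} cannot occur here, which is exactly why the conclusion only yields $\ell_2$. If $T$ were an isomorphism on a copy of $\ell_{p_1}$ inside $X$, then $L_{p_2}(\tau_2)$ would contain an isomorphic copy of $\ell_{p_1}$; but by the classification of the $\ell_r$-subspaces of non-commutative $L_p$-spaces (see \cite{RX}), $\ell_{p_1}$ embeds into $L_{p_2}(\tau_2)$ only if $p_1=p_2$, or $p_2\le p_1\le 2$, or $p_2>2$ with $p_1\in\{2,p_2\}$, and --- granted $p_1\ne p_2$ --- the hypothesis ``$p_1\ge 2$ or $p_2>p_1$'' leaves open only the possibility $p_1=2$. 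But if $p_1=2$ then $X$ is already isomorphic to $\ell_2$ and automatically complemented, and there is nothing to rule out; so we may assume from now on that $p_1\ne 2$, whence $\ell_{p_1}\not\hookrightarrow L_{p_2}(\tau_2)$. (This is precisely where the asymmetric condition on $p_1,p_2$ enters; in the excluded regime $p_2<p_1<2$ one does have $\ell_{p_1}\hookrightarrow L_{p_2}(\tau_2)$, e.g.\ when the von Neumann algebra of $\tau_2$ has a non-atomic part.)

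With the $\ell_{p_1}$-alternative removed, the remainder is the argument of Theorem~\ref{t:main}. If $p_1>2$, the Kadec--Pelczynski dichotomy in $L_{p_1}(\tau_1)$ (\cite[Theorem~0.2]{RX}) supplies a subspace $E\subset X$ isomorphic to $\ell_{p_1}$ or $\ell_2$ and complemented in $L_{p_1}(\tau_1)$; the first option is excluded, so $E\cong\ell_2$. If $p_1<2$, run the two-case analysis of the proof of Theorem~\ref{t:main}: if $\ball(X)$ is not $p_1$-equiintegrable then $X$ contains a complemented copy of $\ell_{p_1}$ by \cite[Theorem~5.1]{RX}, contradicting the non-embedding; hence $\ball(X)$ is $p_1$-equiintegrable, and since $L_{p_1}(\tau_1)$ has the USP (Corollary~\ref{c:uncond}) and cotype $2$ (as $p_1<2$), the Case~2 construction produces a seminormalized, weakly null sequence $(g_n)\subset L_{p_1}(\tau_1)$ equivalent to the $\ell_2$-basis which, being a small perturbation of a sequence in $X$ on which $T$ is bounded below, spans a subspace $E\cong\ell_2$ on which $T$ is an isomorphism. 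Here one need not reprove that $(Tg_n)$ is itself $\ell_2$-equivalent --- the proof of that used cotype $2$ of the \emph{target}, valid only for $p_2\le 2$ --- since it is enough to know that $E\cong\ell_2$ and $T|_E$ is an isomorphism, so that $T(E)$ is a copy of $\ell_2$ in $L_{p_2}(\tau_2)$. Finally, to make $E$ and $T(E)$ complemented one works on each side in turn, passing to a block basis spanning a complemented subspace --- using Proposition~\ref{p:complem} when the relevant exponent is $<2$, the Kadec--Pelczynski dichotomy when it is $>2$, and nothing when it is $2$: first pass to a block basis $(h_k)$ of $(Tg_n)$ spanning a complemented copy of $\ell_2$ in $L_{p_2}(\tau_2)$; the preimages $T^{-1}h_k$ form a block basis of $(g_n)$, and passing to a sub-block-basis spanning a complemented subspace $E$ of $L_{p_1}(\tau_1)$, the matching span in $L_{p_2}(\tau_2)$ is $T(E)$ and is complemented because it is a subspace of a complemented copy of $\ell_2$ (being a subspace of a space isomorphic to $\ell_2$, it is complemented in it).

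The only real obstacle is bookkeeping: one must check that each step of the proof of Theorem~\ref{t:main} stated for a single $L_p(\tau)$ survives the passage to different source and target exponents --- in particular that the cotype-$2$ estimate used to build $(g_n)$ calls only on cotype $2$ of the source $L_{p_1}(\tau_1)$ (so it is needed only when $p_1<2$), and that a complemented copy of $\ell_2$ containing $T(E)$ is available in $L_{p_2}(\tau_2)$ for every admissible $p_2$. The structural heart of the proof, and the step where the asymmetric hypothesis on $(p_1,p_2)$ is genuinely used, is the non-embedding $\ell_{p_1}\not\hookrightarrow L_{p_2}(\tau_2)$.
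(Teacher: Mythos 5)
Your reduction to the non-embedding $\ell_{p_1}\not\hookrightarrow L_{p_2}(\tau_2)$, and your treatment of the case $p_1>2$, match the paper's argument. The problem is the case $p_1<2$, where you run the Case~2 construction from Theorem~\ref{t:main} for \emph{every} admissible $p_2$ and then assert that $T$ is an isomorphism on $E=\span[g_n:n\in\N]$ because $(g_n)$ is ``a small perturbation of a sequence in $X$ on which $T$ is bounded below.'' That step does not work. In the construction one only gets $\|f_n-g_n\|_{p_1}<4\vr$ with $\vr$ a \emph{fixed} small constant, not a summable sequence, so the small perturbation principle is unavailable; and the bound $\|Tg_n\|>c/2$ for each individual $n$ gives no lower estimate for $\|T(\sum_n\alpha_n g_n)\|$ over linear combinations. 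In the paper's proof of Theorem~\ref{t:main}, the fact that $T$ is an isomorphism on $\span[g_n]$ is obtained precisely by proving that $(Tg_n)$ is equivalent to the $\ell_2$-basis, and the lower estimate there comes from unconditionality of $(Tg_n)$ together with cotype~$2$ of the \emph{target}. That is exactly the ingredient you discard, so in the sub-case $1<p_1<2<p_2$ (where $L_{p_2}(\tau_2)$ fails cotype~$2$) your argument has a genuine hole: a priori $(Tg_n)$ could be equivalent to the $\ell_{p_2}$-basis, in which case $T|_E$ is not an isomorphism, and nothing you say rules this out --- note that $E$ is \emph{not} a subspace of $X$, so the hypothesis that $T|_X$ is an isomorphism does not transfer to $E$.

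The paper avoids this by treating $1<p_1<2\le p_2$ on the target side: apply the Kadec--Pelczynski dichotomy in $L_{p_2}(\tau_2)$ (legitimate since $p_2\ge 2$) to $T(X)$, obtaining a subspace of $T(X)$ isomorphic to $\ell_2$ or $\ell_{p_2}$ and complemented in $L_{p_2}(\tau_2)$; the $\ell_{p_2}$-alternative is excluded because $\ell_{p_2}$ does not embed into $L_{p_1}(\tau_1)$, so one gets a complemented copy of $\ell_2$ inside $T(X)$, pulls it back into $X$, and finishes with Proposition~\ref{p:complem} on the source side (the image stays complemented, being a closed infinite-dimensional subspace of a complemented Hilbertian subspace). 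The $(g_n)$-construction is then needed only for $1<p_1<p_2\le 2$, where the target does have cotype~$2$ and the argument of Theorem~\ref{t:main}, including the verification that $(Tg_n)$ is an $\ell_2$-sequence, goes through verbatim. If you restructure your case $p_1<2$ along these lines the proof closes; as written, it does not.
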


Note that the statement of this theorem cannot be extended to the case of
$1 < p_2 < p_1 < 2$, even in the commutative setting: it is well known
that, in this case, $L_{p_2}(0,1)$ contains an uncomplemented copy of $\ell_{p_1}$.

\begin{proof}[Sketch of a proof]
We follow the proof of Theorem \ref{t:main}, preserving the notation.
Suppose $T \in L_{p_1}(\tau_1) \to L_{p_2}(\tau_2)$ is an isomorphism
on a subspace $X \subset L_{p_1}(\tau_1)$, and show that $L_{p_1}(\tau_1)$
contains a subspace $E$, isomorphic to $\ell_2$, so that $E$ and $T(E)$ are
complemented in $L_{p_1}(\tau_1)$ and $L_{p_2}(\tau_2)$, respectively.

First consider the case of $p_1 \geq 2$.
By passing to a subspace, we can assume that $X$ is isomorphic to
either $\ell_{p_1}$ or $\ell_2$, and complemented. But $L_{p_2}(\tau_2)$
cannot contain a copy of $\ell_{p_1}$. Indeed, for $p_2 \geq 2$,
\cite[Proposition 5.4]{RX} shows that every unconditional sequence
in $L_{p_2}(\tau)$ contains a subsequence equivalent to either the
$\ell_2$-basis, or the $\ell_{p_2}$-basis, which are both different from
the $\ell_{p_1}$-basis. For $p_2 \leq 2$, we know (see e.g. \cite{PX})
that $L_{p_2}(\tau_2)$ has cotype $2$, while $\ell_{p_1}$ does not.
So, we can assume $X$ is complemented, and isomorphic to $\ell_2$.
If $p_2 \geq 2$, then $L_{p_2}(\tau_2)$ has type $2$ \cite{PX}, hence,
by Maurey Extension Theorem (see e.g. \cite[Theorem 12.22]{DJT}),
$T(X)$ is complemented in $L_{p_2}(\tau_2)$ (thus, we can take $E = X$).
For $p_2 < 2$, we use Proposition \ref{p:complem} to obtain $E \subset X$
with the desired properties.

Now suppose $1 < p_1 < 2 \leq p_2 < \infty$, and $p_1 \neq p_2$.
Then $T(X)$ contains either a copy of $\ell_2$, or a copy of $\ell_{p_2}$.
However, $\ell_{p_2}$ cannot embed into $L_{p_1}(\tau_1)$, hence,
passing to a subspace, we can assume that $X$ is isomorphic to $\ell_2$.
By Maurey Extension Theorem, any copy of $\ell_2$ in $L_{p_2}(\tau_2)$
is complemented. Proposition \ref{p:complem} allows us to find a subspace
$E \subset X$, complemented in $L_{p_1}(\tau_1)$.

In the case of $1 < p_1 < p_2 \leq 2$,
note first that $\ball(X)$ is $p_1$-equiintegrable. Indeed, otherwise
$X$ contains a copy of $\ell_{p_1}$. However, $L_{p_2}(\tau_2)$
cannot contain a copy of $\ell_{p_1}$, since the former space has type
$p_2$, and the latter does not.

Now find a normalized unconditional basic sequence $(f_n)$ in $\ball(X)$,
and use it (as in Theorem \ref{t:main}) to obtain a sequence $(g_n)$ in
$L_{p_1}(\tau_1)$ so that:
(1) $\|g_n\|_{p_1} \in [1/2,2]$.
(2) $\sup_n \|g_n\|_\infty < \infty$.
(3) The sequence $(g_n)$ is weakly null in both $L_{p_1}(\tau)$ and $L_2(\tau)$.
(4) $(g_n)$ is equivalent to an orthonormal basis in $\ell_2$, in
both $L_{p_1}(\tau)$ and $L_2(\tau)$.
(5) $(T g_n)$ is equivalent to the $\ell_2$-basis.
\end{proof}

In a similar manner we prove:

\begin{proposition}\label{p:SS=K}
Suppose $\infty > p_1 \geq 2 \geq p_2 > 1$, and $\tau_1, \tau_2$
are finite traces on hyperfinite von Neumann algebras. Then
$\iSS(L_{p_1}(\tau_1), L_{p_2}(\tau_2)) =
 \iK(L_{p_1}(\tau_1), L_{p_2}(\tau_2))$.
\end{proposition}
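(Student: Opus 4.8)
Arguing along the lines of the proof of Theorem~\ref{t:main'}, the plan is to establish the nontrivial inclusion $\iSS(L_{p_1}(\tau_1),L_{p_2}(\tau_2))\subseteq\iK(L_{p_1}(\tau_1),L_{p_2}(\tau_2))$ (the reverse one holds for all Banach spaces). So suppose $T$ is strictly singular but not compact. Since $p_1>1$, the space $L_{p_1}(\tau_1)$ is reflexive; starting from a bounded sequence whose $T$-image has no norm-convergent subsequence, I would pass to a weakly convergent subsequence, subtract the weak limit, and take normalized differences to get a seminormalized weakly null sequence $(x_n)$ in $L_{p_1}(\tau_1)$ with $\inf_n\|Tx_n\|_{p_2}>0$. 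Then $(Tx_n)$ is seminormalized and weakly null in $L_{p_2}(\tau_2)$.

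Next I would invoke the non-commutative Kadec-Pelczynski dichotomy (\cite{RX}, as used in the proofs of Proposition~\ref{p:complem} and Theorem~\ref{t:main}), applied on a common subsequence to both sequences: after relabelling, $(x_n)$ is equivalent to the unit vector basis of $\ell_2$ or of $\ell_{p_1}$, and $(Tx_n)$ is equivalent to the unit vector basis of $\ell_2$ or of $\ell_{p_2}$. Since $p_2\leq 2$, in either case for the image we have $\|\sum_{i=1}^n Tx_i\|_{p_2}\gtrsim n^{1/2}$ (it is $\sim n^{1/2}$ in the $\ell_2$-alternative and $\sim n^{1/p_2}\geq n^{1/2}$ in the $\ell_{p_2}$-alternative). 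As $\|\sum_{i=1}^n Tx_i\|_{p_2}=\|T(\sum_{i=1}^n x_i)\|_{p_2}\leq\|T\|\,\|\sum_{i=1}^n x_i\|_{p_1}$, this forces $\|\sum_{i=1}^n x_i\|_{p_1}\gtrsim n^{1/2}$. If $(x_n)$ were equivalent to the $\ell_{p_1}$-basis with $p_1>2$, this would be impossible, since then $\|\sum_{i=1}^n x_i\|_{p_1}\sim n^{1/p_1}=o(n^{1/2})$; hence $(x_n)$ is $\ell_2$-equivalent (the only option anyway when $p_1=2$). Consequently $\|\sum_{i=1}^n x_i\|_{p_1}\sim n^{1/2}$, so $\|\sum_{i=1}^n Tx_i\|_{p_2}\lesssim n^{1/2}$, which by the same comparison rules out the $\ell_{p_2}$-alternative when $p_2<2$; thus $(Tx_n)$ is $\ell_2$-equivalent as well.

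Since $(x_n)$ and $(Tx_n)$ are both equivalent to the $\ell_2$-basis, for every finitely supported scalar sequence $(\alpha_i)$ we have $\|\sum_i\alpha_i Tx_i\|_{p_2}\sim(\sum_i|\alpha_i|^2)^{1/2}\sim\|\sum_i\alpha_i x_i\|_{p_1}$, so $T$ restricts to an isomorphism on the infinite-dimensional subspace $\span[x_n : n\in\N]$ --- contradicting strict singularity. Hence $T$ is compact, which is what we wanted. The estimates here are just elementary comparisons of $\ell_r$-norms of constant vectors; the step demanding care is pinning down the exact form of the Kadec-Pelczynski dichotomy used for an index below $2$ (the $p$-equiintegrable versus non-$p$-equiintegrable alternative of \cite[Section~5]{RX}) and keeping the successive passages to subsequences orderly. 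Finally, the Hilbertian endpoints $p_1=2$ and $p_2=2$ need no separate handling: there the dichotomy simply produces an $\ell_2$-equivalent subsequence, and the same chain of inequalities applies.
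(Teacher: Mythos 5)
Your overall strategy (reduce to a seminormalized weakly null sequence $(x_n)$ with $\inf_n\|Tx_n\|>0$ and derive a contradiction with strict singularity by comparing $\ell_2$-type estimates on $(x_n)$ and $(Tx_n)$) is the same as the paper's, and your treatment of the domain side is fine: for $p_1\geq 2$, \cite[Proposition 5.4]{RX} does give, after passing to an unconditional subsequence (this uses Corollary~\ref{c:uncond}; you should say so explicitly), that $(x_n)$ is equivalent to the $\ell_2$- or $\ell_{p_1}$-basis, and in either case $\|\sum_k\alpha_k x_k\|_{p_1}\lesssim(\sum_k|\alpha_k|^2)^{1/2}$.

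The gap is on the image side. You apply the Kadec--Pelczynski dichotomy in $L_{p_2}(\tau_2)$ with $p_2\leq 2$ in the form ``$(Tx_n)$ has a subsequence equivalent to the $\ell_2$-basis or to the $\ell_{p_2}$-basis,'' but that form of the dichotomy is only valid for index $\geq 2$. For $1<p_2<2$ the equiintegrable alternative does \emph{not} produce an $\ell_2$-sequence: already in the commutative case, $L_{p_2}(0,1)$ contains isometric copies of $\ell_q$ for every $q\in(p_2,2)$ (spanned by $q$-stable variables), whose unit vector bases are weakly null, unconditional, $p_2$-equiintegrable, and equivalent to neither the $\ell_2$- nor the $\ell_{p_2}$-basis; a weakly null sequence in $L_{p_2}$ need not be equivalent to any $\ell_q$-basis at all. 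So your growth-rate comparison, which presupposes that $(Tx_n)$ falls into one of two alternatives, does not get off the ground. The paper sidesteps any classification of $(Tx_n)$: it only uses that $(Tx_n)$ is seminormalized and (after passing to a subsequence) unconditional, and that $L_{p_2}(\tau_2)$ has cotype $2$, which yields the lower estimate $\|\sum_k\alpha_k Tx_k\|_{p_2}\gtrsim(\sum_k|\alpha_k|^2)^{1/2}$ directly. Combined with your upper estimate $\|\sum_k\alpha_k Tx_k\|_{p_2}\leq\|T\|\,\|\sum_k\alpha_k x_k\|_{p_1}\lesssim(\sum_k|\alpha_k|^2)^{1/2}$, this shows $T$ is bounded below on $\span[x_n:n\in\N]$, giving the contradiction. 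Replacing your dichotomy step on the image by this cotype-$2$ argument repairs the proof.
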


Clearly the conclusion fails if $p_1$ and $p_2$ are ``on the same side of $2$''.
Indeed, if $p_1, p_2 \geq 2$, and $p_1 \neq p_2$, then $L_{p_1}$ ($L_{p_2}$)
contains a complemented copy of $\ell_2$ (resp. $\ell_{p_2}$), and the formal
identity from $\ell_2$ to $\ell_{p_2}$ is strictly singular, but not compact.
The case of $p_1, p_2, \leq 2$ and $p_1 \neq p_2$ can be handled similarly. 
However, if we restrict ourselves to positive operators, more can be said,
see Theorem \ref{t:positive}.

\begin{proof}
Consider a non-compact $T \in B(L_{p_1}(\tau_1), L_{p_2}(\tau_2))$.
Then there exists a weakly null sequence $(x_k)$ in $L_{p_1}(\tau_2)$
so that $(T x_k)$ is not null. By passing to a subsequence if necessary,
we can assume that the sequences $(x_k)$ and $(T x_k)$ are unconditional,
and $\inf_k \|T x_k\| > 0$.
By \cite[Proposition 5.4]{RX}, by passing to a further subsequence if
necessary, we can assume that $(x_k)$ is equivalent to either the $\ell_2$-basis,
or the $\ell_{p_1}$ basis. In particular, there exists a constant $c_1$
so that the inequality
$\|\sum_k \alpha_k T x_k\|_{p_1} \leq c_1 \big( \sum_k |\alpha_k|^2 \big)^{1/2}$
holds for any finite sequence of scalars $(\alpha_k)$.
On the other hand, $L_{p_2}(\tau_2)$ has cotype $2$ (see e.g. \cite{PX}),
hence there exists a constant $c_2$ so that,
for any finite sequence $(\alpha_k)$,
$c_2 \|\sum_k \alpha_k T x_k\|_{p_2} \geq \big( \sum_k |\alpha_k|^2 \big)^{1/2}$.
Consequently,
$\|\sum_k \alpha_k T x_k\|_{p_1} \leq c_1 c_2 \|\sum_k \alpha_k T x_k\|_{p_2}$,
which contradicts the strict singularity of $T$.
\end{proof}



\subsection{Strict singularity and compactness of positive operators}


For positive operators, some results of the previous subsection can be
sharpened. The proposition below refines Proposition \ref{p:SS=K}.

\begin{theorem}\label{t:positive}
Suppose $\tau_1$ and $\tau_2$ are normal faithful finite traces on
hyperfinite von Neumann algebras $\A_1$ and $\A_2$, respectively,
and $1 < u_2 \leq u_1 < \infty$. Then
$\iSS(L_{u_1}(\tau_1),L_{u_2}(\tau_2))_+ =
 \iK(L_{u_1}(\tau_1),L_{u_2}(\tau_2))_+$.
\end{theorem}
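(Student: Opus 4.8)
The plan is to reduce to the case $u_1 = u_2 =: u$ via the factorization of positive operators, and then to exploit that a positive strictly singular operator on $L_u(\tau)$ that is not compact would produce a positive, non-compact, strictly singular operator which fixes a complemented copy of $\ell_2$; the order structure of $L_u(\tau)$ will then be used to upgrade this $\ell_2$-copy into a \emph{disjointly supported} (commuting) sequence, on which a positive operator behaves like a multiplication/diagonal operator, and a strictly singular diagonal operator between $\ell_u$-type lattices is compact. Concretely, first I would observe that since $u_2 \le u_1$ and $\tau_2$ is finite, the formal inclusion $L_{u_1}(\tau_2) \hookrightarrow L_{u_2}(\tau_2)$ is a positive contraction; using the diagonal decomposition $\A_1 \supset \cdots$ one can find a sub-algebra on which $\tau_1$ restricts to a multiple of a finite trace and reduce matters, after composing with such inclusions, to showing $\iSS(L_u(\tau))_+ = \iK(L_u(\tau))_+$ for a single finite hyperfinite $\tau$ — the point being that a positive operator $T: L_{u_1}(\tau_1) \to L_{u_2}(\tau_2)$ factors (up to the harmless inclusion) through the identity-type behavior at a common exponent, so strict singularity is inherited and compactness of the middle piece suffices.

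Next, suppose toward a contradiction that $T \ge 0$ is strictly singular but not compact on $L_u(\tau)$. Non-compactness yields a seminormalized weakly null sequence $(x_k)$ with $\inf_k \|Tx_k\| > 0$. Replacing $x_k$ by $|x_k|$ is not literally allowed (taking absolute values need not preserve weak nullity in $L_u$), so instead I would use the lattice structure more carefully: by the Kadec--Pelczynski dichotomy \cite[Theorem~5.4]{RX}, after passing to a subsequence $(x_k)$ is equivalent to an $\ell_u$- or $\ell_2$-basis; in either case one can, by a gliding-hump / small-perturbation argument inside $L_u(\tau)$, replace $(x_k)$ by a sequence $(y_k)$ of \emph{pairwise disjoint} positive elements (mutually orthogonal projections supporting them) with $\inf_k \|Ty_k\| > 0$ and $(Ty_k)$ still seminormalized unconditional. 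On such a disjoint positive sequence, positivity of $T$ forces $\|\sum_k \alpha_k T y_k\| \ge \|\sum_{k} |\alpha_k| \, T y_k \| / C$ and lattice estimates show $(Ty_k)$ dominates, and is dominated by, an $\ell_u$-sum; meanwhile $(y_k)$ being disjoint positive and seminormalized in $L_u(\tau)$ with $\tau$ finite is equivalent to the $\ell_u$-basis. Hence $T$ restricted to $\overline{\span}[y_k]$ is an isomorphism onto a copy of $\ell_u$ — contradicting strict singularity. When the relevant exponents straddle $2$ one instead gets the $\ell_2$ versus $\ell_u$ mismatch already exploited in Proposition~\ref{p:SS=K}, giving the contradiction there.

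I would organize the disjointification as the technical core. Starting from an unconditional weakly null $(x_k)$ in $L_u(\tau)_+$ is not automatic, so I would first apply Corollary~\ref{c:uncond} and Proposition~\ref{p:FDD} to pass to an unconditional basic sequence, then use the equiintegrability dichotomy: if $\ball(\span[x_k])$ is not $u$-equiintegrable, \cite[Theorem~5.1]{RX} already gives a complemented $\ell_u$ inside, and positivity lets one arrange the generating vectors to be (nearly) disjointly supported via spectral projections, as in the truncation argument $\tilde f_n = u_n\phi(|f_n|)$ used in the proof of Theorem~\ref{t:main}; if it is $u$-equiintegrable one is in the $\ell_2$ regime and the cotype/type obstruction finishes the job once $u_1 \ne u_2$ or $u \ne 2$, while the case $u_1 = u_2 = 2$ is trivial since then $\iSS = \iK$ on Hilbert space. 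The main obstacle I anticipate is precisely making the disjointification rigorous in the noncommutative setting: absolute values and order projections interact delicately with weak convergence and with the $L_u$-norm, so I would lean on Lemma~\ref{l:equi}, Remark~\ref{r:equi}, and the biequiintegrability characterization of \cite[Section~4]{RX} to control the tails $\|e_\alpha y_k e_\alpha\|$ and thereby replace $(x_k)$ by a genuinely disjoint positive sequence losing only an $\vr$ in norm — after which the lattice estimate that a positive operator does not shrink disjoint sums (together with the $\ell_u$-equivalence of seminormalized disjoint positive sequences in a finite von Neumann algebra) delivers the contradiction.
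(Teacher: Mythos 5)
There is a genuine gap, and it sits exactly where the theorem is hard. Your plan hinges on replacing the weakly null sequence $(x_k)$ with $\inf_k\|Tx_k\|>0$ by a \emph{disjoint positive} sequence $(y_k)$ on which $T$ is still bounded below. In the equiintegrable ($\ell_2$) regime this cannot be done: by Lemma~\ref{pos_eqi} a weakly null $u$-equiintegrable sequence of \emph{positive} elements is norm null, so no seminormalized positive sequence lives there at all, and a disjoint positive sequence manufactured elsewhere carries no information about $T$ on the original span. The genuinely difficult configuration — say $u_1\geq u_2>2$ with $(x_k)$ equivalent to the $\ell_2$-basis and $(Tx_k)$ equivalent to the $\ell_{u_2}$-basis — is perfectly consistent with $T$ being bounded and strictly singular as a mere operator (since $\|\alpha\|_{u_2}\leq\|\alpha\|_2$), so no type/cotype count "finishes the job"; positivity must enter in an essential way, and your outline never actually uses it to close this case. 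The paper's device is to pass to the adjoint: Lemma~\ref{l:dichotomy} makes $(Tx_k)$ nearly supported on mutually orthogonal projections $p_k$, one picks \emph{positive} norming functionals $z_k=p_kz_kp_k$ in $L_{v_2}(\tau_2)$, observes that $(z_k)$ is an $\ell_{v_2}$-basis (hence weakly null) and that $T^*z_k\geq 0$ is seminormalized and weakly null with no $\ell_{v_1}$-subsequence, hence $v_1$-equiintegrable by \cite[Theorem 5.3]{RX} — contradicting Lemma~\ref{pos_eqi}. Nothing equivalent to this dual disjointification-plus-positivity step appears in your proposal, and without it the argument does not reach a contradiction in the main case.

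Two further problems. First, the opening reduction to $u_1=u_2$ is unjustified: the inclusion $L_{u_1}(\tau_2)\hookrightarrow L_{u_2}(\tau_2)$ points the wrong way for this purpose — there is no reason a positive $T:L_{u_1}(\tau_1)\to L_{u_2}(\tau_2)$ should take values in $L_{u_1}(\tau_2)$, so $T$ does not factor through a single-exponent operator followed by the formal inclusion. (The only genuine reduction available is the one the paper uses: the mixed case $u_2\leq 2\leq u_1$ already follows from Proposition~\ref{p:SS=K} without positivity, leaving the two same-side-of-$2$ cases.) Second, the regime $1<u_2\leq u_1\leq 2$ requires transferring strict singularity to $T^*$ before running the above argument on $L_{v_2}(\tau_2)\to L_{v_1}(\tau_1)$; this uses the subprojectivity of $L_{v_1}(\tau_1)$ together with \cite[Theorem 7.54(ii)]{Ai}, a step absent from your sketch. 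The heuristic that "a positive operator does not shrink disjoint sums" also does not by itself yield that $(Ty_k)$ is equivalent to an $\ell_u$-basis — the images of a disjoint positive sequence under a positive operator need not be disjoint, and the upper $\ell_u$-estimate you would need is not a lattice triviality.
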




\begin{remark}\label{r:pos optimal}
In the commutative case, related results were obtained in \cite{CG87, Fl04}.
The examples from \cite{Fl04} show that the conclusion of Theorem \ref{t:positive}
fails, even in the commutative case, when $u_1 < u_2$. We cannot extend our
theorem to the case of $1 \leq u_2 < u_1 = \infty$ either.
Indeed, consider the formal identity $i : L_\infty(0,1) \to L_{u_2}(0,1)$.
It is well known that the norms $\| \cdot \|_\infty$ and $\| \cdot \|_q$
cannot be equivalent on an infinite dimensional subspace for $q < \infty$,
hence $i$ is strictly singular. However, $i$ is not compact, since all
Rademacher functions belong to $i(\ball(L_\infty))$.

Theorem \ref{t:positive} also fails for $u_1 = u_2 = 1$, even in the commutative case:
there exists a positive non-compact strictly singular operator on $L_1$. Indeed,
let $(r_n)_{1}^{\infty}$ be a Rademacher system and $e$ be the identity.
Define $x_n=e+r_n$. 
Set $U : \ell_1 \to L_1$ as $U \delta_n=x_n$, where $(\delta_n)$
is the canonical basis for $\ell_1$. It is easy to check that $U$
is positive, and not compact. By Khintchine Inequality,
$\span[e, r_1, r_2, \ldots]$ is isomorphic to $\ell_2$, hence
the same is true for $\span[x_n : n \in \N]$. Therefore, $U$ is
strictly singular. The required operator is
the composition of a positive projection on a copy of $\ell_1$ with $U$.
\end{remark}

The rest of this subsection is devoted to proving Theorem \ref{t:positive}.
We assume that all traces are normalized.
Denote by $L_p(\tau)_{sa}$ the self-adjoint (real)
part of $L_p(\tau)$. For the proof we need an auxiliary result.

\begin{lemma}\label{l:dichotomy}
Suppose $2 < p < \infty$, and $(x_k)$ is an unconditional self-adjoint
normalized sequence in $L_p(\tau)$, where $\tau$ is a finite normal
faithful trace on a von Neumann algebra $\A$. Then either $(x_k)$ is equivalent
to the $\ell_2$-basis, or there exist $n_1 < n_2 < \ldots$, and a sequence
of mutually orthogonal projections $p_k \in \A$, so that
$\lim_k \|x_{n_k} - p_k x_{n_k} p_k\|_p = 0$.
\end{lemma}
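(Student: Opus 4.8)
The plan is to use the non-commutative Kadec--Pelczynski dichotomy from \cite{RX} to split into the two announced alternatives, and then to work on the ``bad'' side. First, I would invoke \cite[Theorem 0.2]{RX} (the Kadec--Pelczynski dichotomy for $p>2$): since $(x_k)$ is unconditional and normalized in $L_p(\tau)$, by passing to a subsequence it is equivalent either to the $\ell_2$-basis — in which case we are done — or to a normalized $p$-equiintegrable sequence that is equivalent to the $\ell_p$-basis; equivalently, the failure of the first alternative means that after passing to a subsequence, $\ball(\span[x_k])$ is \emph{not} $p$-equiintegrable. So assume the sequence is not $p$-equiintegrable. By the description of non-$p$-equiintegrable sequences in \cite[Section 5]{RX} (see also \cite[Theorem 5.1]{RX}), the failure of $p$-equiintegrability produces a sequence of projections $e_n \to 0$ (weakly, hence with $\tau(e_n)\to 0$ by Remark \ref{r:equi}, since $\tau$ is finite) and a constant $c>0$ with $\|e_{n}x_{n}e_{n}\|_p \geq c$ along a subsequence; the standard refinement then yields $\|x_{n_k}-e_{n_k}x_{n_k}e_{n_k}\|_p\to 0$ would be too strong to get directly, so instead the right intermediate object is a sequence of ``disjoint-like'' pieces.

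The key technical step is to replace the $e_{n_k}x_{n_k}e_{n_k}$ by genuinely mutually orthogonal compressions. Here I would proceed as in the proof of Theorem \ref{t:main}, Case 1: starting from the non-$p$-equiintegrability, extract a subsequence $(x_{n_k})$ and projections $(e_k)$ with $\tau(e_k)\to 0$ such that $\|x_{n_k} - e_k x_{n_k} e_k\|_p \to 0$ — this uses equiintegrability/Lemma \ref{l:equi} applied to the ``equiintegrable part'' (the tail pieces $r_n x_n r_n$ with $r_n$ the spectral projection of $|x_n|$ above a large level, which are small) together with a diagonal/gliding-hump argument on the non-equiintegrable part to push the supports of the main pieces toward mutual orthogonality. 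Concretely, having fixed $\vr_k\downarrow 0$, one chooses $e_k$ with $\tau(e_k)$ small, approximates $x_{n_k}$ by $e_k x_{n_k} e_k$ within $\vr_k$, and — since the traces $\tau(e_k)$ are summable after thinning — uses the fact that a sequence of projections with summable traces can be perturbed (again within $\vr_k$, via \cite[Lemma 1.8]{Yama}-type support estimates, or simply by replacing $e_k$ with $e_k \wedge (\bigvee_{j<k} e_j)^\perp$ up to a small error controlled by $\sum_{j<k}\tau(e_j)$) to a mutually orthogonal family $p_k$. Then $\|x_{n_k}-p_k x_{n_k} p_k\|_p \leq \|x_{n_k}-e_k x_{n_k}e_k\|_p + \|e_k x_{n_k}e_k - p_k x_{n_k}p_k\|_p \to 0$. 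Finally, since $x_k$ is self-adjoint, one takes $p_k x_{n_k} p_k$ to be self-adjoint automatically, and replacing $p_k$ by a slightly larger projection if necessary keeps mutual orthogonality while making the $p_k$ actual projections in $\A$ (not just in $\tilde{\A}$); self-adjointness of $x_{n_k}$ guarantees $p_k x_{n_k} p_k = (p_k x_{n_k} p_k)^\star$, consistent with the statement.

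The main obstacle I expect is the ``disjointification'': turning the compressions $e_k x_{n_k} e_k$, whose supporting projections $e_k$ merely have small trace, into \emph{mutually orthogonal} compressions without destroying the approximation $\|x_{n_k}-e_k x_{n_k}e_k\|_p\to 0$. The standard trick — after thinning so that $\sum_k \tau(e_k)<\infty$, subtract off the parts of $e_k$ overlapping with $\bigvee_{j<k}e_j$, whose trace is at most $\sum_{j<k}\tau(e_j)$ — does \emph{not} immediately give summable errors, so the subsequence must be chosen adaptively: at stage $k$ first fix the already-constructed $p_1,\dots,p_{k-1}$, then pick $n_k$ large enough and $e_k$ with $\tau(e_k)$ small enough that the overlap of $e_k$ with $\bigvee_{j<k}p_j$ has trace $\ll \vr_k^p$, which is possible precisely because $x_{n_k}$ being not $p$-equiintegrable still allows the supporting projection to be chosen with arbitrarily small trace while capturing a definite fraction of the norm; equiintegrability of the discarded tail (Lemma \ref{l:equi}) then controls the perturbation in $L_p$-norm. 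Handling the interaction between ``small trace'' and ``small $L_p$-perturbation'' carefully — i.e. using Lemma \ref{l:equi} in the form that a set with a single non-equiintegrable generating sequence still has its equiintegrable remainder uniformly small on low-trace projections — is the delicate point, but it is entirely parallel to the argument already used in Case 2 of the proof of Theorem \ref{t:main}.
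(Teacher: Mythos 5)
Your overall architecture (a Kadec--Pelczynski-type dichotomy, then in the non-$\ell_2$ case finding spectral projections of small trace carrying most of each $x_{n_k}$, then disjointifying) is the same as the paper's, and you correctly flag the disjointification as the delicate point. But your proposed disjointification has a genuine gap: you subtract from $e_k$ its overlap with the \emph{previously} constructed projections $\bigvee_{j<k}p_j$, and you try to control the resulting error by making $\tau(e_k)$ small. Small trace of the overlap $f_k = e_k \wedge \bigvee_{j<k}p_j$ does not make $\|f_k x_{n_k}\|_p$ small: that would require an equiintegrability-type estimate for $x_{n_k}$ at a trace scale that must be fixed \emph{before} $n_k$ is chosen (since $n_k$ is selected so that $x_{n_k}$ escapes $M_{c_k}$ for a prescribed $c_k$), and no such uniform estimate is available --- its failure is exactly what puts us in this case. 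The obstruction is already visible commutatively: take $x_k = 2^{k/p}\chi_{(0,2^{-k})}$ in $L_p(0,1)$; every later support sits inside the first one, so removing the overlap with the earlier supports annihilates $x_k$ entirely, no matter how cleverly $e_k$ is chosen. Your fallback citations (Lemma \ref{l:equi}, which applies to equiintegrable sets, and \cite[Lemma 1.8]{Yama}, which concerns separability) do not repair this.

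The paper's proof resolves this by disjointifying in the \emph{opposite} direction. It takes $q_k = \chi_{\R\setminus(-4^{-k},4^{-k})}(x_k)$ (so $\|x_k - q_kx_kq_k\|_p \leq \|x_kq_k^\perp\|_\infty \leq 4^{-k}$ automatically, using self-adjointness and finiteness of $\tau$), thins the sequence so that $\tau(q_k)$ decreases geometrically \emph{and} so that every projection $q$ with $\tau(q)\leq 2\tau(q_k)$ satisfies $\max_{i<k}\|qx_i\|_p < 4^{-(k+1)}$ --- an absolute-continuity condition imposed only on the finitely many \emph{already chosen} elements, hence non-circular --- and then sets $p_k = q_k \wedge \big(\bigvee_{j>k}q_j\big)^\perp$. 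By Kaplansky's formula $\tau(q_k - p_k) \leq \tau\big(\bigvee_{j>k}q_j\big) < 2\tau(q_{k+1})$, so the removed piece is a projection of trace controlled by the \emph{later}, much smaller projections, and the thinning condition bounds its interaction with $x_k$. This reversal of the order of subtraction is the key idea your proposal is missing. (A secondary point: the paper's first alternative keeps the \emph{whole} sequence $(x_k)$ equivalent to the $\ell_2$-basis, via membership of all $x_k$ in a single set $M_c$ and the argument of \cite{S96}, whereas your dichotomy only yields a subsequence; and your phrase ``$p$-equiintegrable sequence that is equivalent to the $\ell_p$-basis'' has the two alternatives crossed.)
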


\begin{proof}
The proof uses a variation on a well-known ``Kadec-Pelczynski'' method.
Our exposition follows \cite{S96}. For $c > 0$, set
$$
M_c = \big\{x \in L_p(\tau) : \tau(\chi_{(c\|x\|_p, \infty)}(|x|)) \geq c \big\} .
$$
If there exists $c > 0$ so that $x_k \in M_c$ for every $k$, then, by
the proof of \cite[Theorem 2.4]{S96}, $(x_k)$ is equivalent to the
$\ell_2$-basis. Otherwise, by passing to a subsequence, we can assume
that the projections
$$
q_k = \chi_{\R \backslash (-4^{-k}, 4^{-k})}(x_k)
$$
satisfy two conditions:
\begin{enumerate}
\item
$\tau(q_1) < 1/8$, and $\tau(q_k) < \tau(q_{k-1})/8$ for $k > 1$.
\item
If $q$ is a projection with $\tau(q) \leq 2 \tau(q_k)$, then
$\max_{i < k} \|q x_i\|_p < 4^{-(k+1)}$ (see \cite[Theorem 4.2]{FK}
to show that this can be satisfied).
\end{enumerate}
Let $r_k = \vee_{j > k} q_j$, and $p_k = q_k \wedge r_k^\perp$.
We claim that
\begin{equation}
\|x_k - p_k x_k p_k\|_p < 4^{-k} .
\label{eq:ineq}
\end{equation}
Indeed, write $p_k = q_k - q_k^\prime$. Then
$$
x_k - p_k x_k p_k = (x_k - q_k x_k q_k) + q_k^\prime x_k (q_k - q_k^\prime)
+ q_k x_k q_k^\prime .
$$
Clearly, $\|x_k - q_k x_k q_k\|_p \leq \|x_k q_k^\perp\|_\infty < 8^{-k}$.
Furthermore, $\tau(r_k) \leq \sum_{j > k} \tau(q_j) < 2 \tau(q_{k+1})$, hence,
by Kaplansky's Formula \cite[Theorem 6.1.7]{KR2},
$$
\tau(q_k^\prime) = \tau(q_k) - \tau(q_k \wedge r_k^\perp) =
\tau(q_k \vee r_k^\perp) - \tau(r_k^\perp) \leq
1 - \tau(r_k^\perp) = \tau(r_k) < 2 \tau(q_{k+1}) .
$$
Thus, $\|q_k^\prime x_k\|_p = \|x_k q_k^\prime\|_p < 4^{-(k+1)}$.
Together, these inequalities give us \eqref{eq:ineq}.
\end{proof}

\begin{lemma}\label{pos_eqi}
Suppose $\tau$ is a faithful normal semifinite trace on a von Neumann algebra,
and $1 \leq p < \infty$.
Then every $p$-equiintegrable weakly null sequence  $(f_n) \subset L_p(\tau)_+$
is norm null. In particular, no sequence in $L_p(\tau)_+$
is equivalent to a standard basis of $\ell_2$.
\end{lemma}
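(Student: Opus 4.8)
The plan is to exploit positivity to turn a $p$-equiintegrability estimate into a genuine norm bound. Suppose $(f_n) \subset L_p(\tau)_+$ is $p$-equiintegrable and weakly null, but not norm null; passing to a subsequence, assume $\|f_n\|_p = 1$ for all $n$. Since each $f_n \geq 0$, we have $\tau(f_n^p) = 1$, and I want to locate, for each $n$, a ``large part'' of $f_n$ supported on a projection of small trace, contradicting Lemma \ref{l:equi}. Concretely, fix $\vr > 0$ small and let $e_n = \chi_{(\lambda_n, \infty)}(f_n)$ be a spectral projection of $f_n$, with the cutoff $\lambda_n$ chosen so that the ``tail'' $f_n \chi_{(0,\lambda_n]}$ has small $L_p$ norm, say $\|f_n \chi_{(0,\lambda_n]}\|_p < \vr$. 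Then $\|f_n e_n\|_p = \|f_n^{1/2}\|$-type estimates give $\|e_n f_n e_n\|_p > 1 - \vr$ (here I use that $f_n$ commutes with its spectral projections, so $f_n = f_n\chi_{(0,\lambda_n]} + e_n f_n e_n$ as a sum of positive operators). On the other hand, $\lambda_n^p \tau(e_n) \leq \tau(f_n^p) = 1$, so $\tau(e_n) \leq \lambda_n^{-p}$.

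The crux is that $\lambda_n$ can be taken \emph{bounded below}. This is where weak nullity enters: if $\lambda_n \to 0$ along a subsequence, then the truncations $\phi_{\lambda}(f_n)$ (with $\phi_\lambda(t) = \min\{t,\lambda\}$, or $t$ on $[0,\lambda]$ and $0$ beyond) would have to carry essentially all of the norm, i.e. $\|f_n - e_n f_n e_n\|_p = \|f_n \chi_{(0,\lambda_n]}\|_p$ is small while simultaneously $\|f_n \chi_{(0,\lambda_n]}\|_p$ is close to $1$ — but more cleanly: a weakly null sequence of positive norm-one elements cannot be ``flat,'' because $\|f_n\|_p^p = \tau(f_n^p)$ and, for a genuinely spread-out $f_n$ of norm $1$, one can pair it against a fixed positive element of $L_q(\tau)$ (or against $\one$, using finiteness-type arguments via the generalized singular value function as in the proof of Lemma \ref{l:equi}) to get a non-vanishing functional value. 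So there is $\lambda_0 > 0$ with $\lambda_n \geq \lambda_0$ for infinitely many $n$. Consequently $\tau(e_n) \leq \lambda_0^{-p} \cdot$(something that can be driven down) — more precisely, I should choose the cutoff as $\lambda_n = \eta \|f_n\|_p = \eta$ for a \emph{fixed} small $\eta$ and instead argue that $p$-equiintegrability itself forces $\tau(e_n) \to 0$: by Lemma \ref{l:equi}, $\|f_n e_n\| \to 0$ unless $\tau(e_n) \not\to 0$, and if $\tau(e_n) \geq c > 0$ infinitely often then $(f_n)$ restricted to those indices has a ``chunk'' of size $\geq c$ bounded away from $0$ in a way incompatible with $p$-equiintegrability after extracting a suitable subsequence of projections via \cite[Section 4]{RX}.

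The cleanest route, which I would adopt in the write-up: use the equivalence (2) in the list preceding Lemma \ref{l:equi}. For each $n$ pick the spectral projection $e_n = \chi_{(\eta,\infty)}(f_n)$ with $\eta$ fixed so small that $\|f_n\chi_{[0,\eta]}\|_p < 1/2$, whence $\|e_n f_n e_n\|_p \geq 1/2$. Since $\eta^p \tau(e_n) \leq 1$, the $\tau(e_n)$ are uniformly bounded; extract a subsequence so that, by a maximality/disjointification argument (or directly because weak nullity of $(f_n)$ forces $e_n \to 0$ weakly — indeed $\tau(e_n x) \to 0$ for $x \in L_q$ would follow if the $e_n$ could be taken $\leq$ a fixed multiple of $f_n$, which they are not, so instead I argue $e_n \leq \eta^{-1} f_n$ as positive operators, hence $\tau(e_n x) \leq \eta^{-1}\tau(f_n x) \to 0$ for $x \in L_q(\tau)_+$, and by polarization for all $x$, so $e_n \to 0$ weakly). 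Then $p$-equiintegrability applied to the sequence of projections $(e_n)$ gives $\sup\|e_n f_n e_n\|_p \to 0$ — wait, equiintegrability is stated for the \emph{set} $K = \{f_n\}$ against an arbitrary weakly null projection sequence, so $\|e_n f_n e_n\|_p \leq \sup_{h \in K}\|e_n h e_n\|_p \to 0$, contradicting $\|e_n f_n e_n\|_p \geq 1/2$. This contradiction proves $(f_n)$ is norm null. The "in particular" is immediate: an $\ell_2$-basis is weakly null and seminormalized but not norm null, and by Lemma \ref{l:equi} / the list above any $\ell_2$-basic sequence would be $p$-equiintegrable (a seminormalized $\ell_2$-sequence cannot charge projections of small trace, since on such a subspace the $L_p$ and $L_\infty$-type truncation estimates force equiintegrability — alternatively, a non-equiintegrable sequence contains an $\ell_p$-subsequence by \cite[Theorem 5.1]{RX}, and $\ell_p \not\hookrightarrow$-as-$\ell_2$ when $p \neq 2$).

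\begin{proof}[Proof of Lemma \ref{pos_eqi}]
Suppose, for contradiction, that $(f_n) \subset L_p(\tau)_+$ is $p$-equiintegrable, weakly null, but not norm null; passing to a subsequence, assume $\|f_n\|_p = 1$ for all $n$. Since $f_n \geq 0$ commutes with its spectral projections, for a cutoff $\eta > 0$ write $f_n = f_n \chi_{[0,\eta]}(f_n) + f_n \chi_{(\eta,\infty)}(f_n)$, a sum of two positive operators, and choose $\eta$ fixed, so small that the constant function bound $\|f_n \chi_{[0,\eta]}(f_n)\|_p \leq \eta^{1 - 1/p} \|f_n\chi_{[0,\eta]}(f_n)\|_1^{1/p}$... in fact more simply: $\|f_n\chi_{[0,\eta]}(f_n)\|_p^p = \tau(f_n^p \chi_{[0,\eta]}(f_n)) \leq \eta^{p-1}\tau(f_n \chi_{[0,\eta]}(f_n)) \leq \eta^{p-1}\tau(f_n)$. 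This is not yet bounded, so instead truncate from above: set $e_n = \chi_{(\eta,\infty)}(f_n)$ and note $e_n f_n e_n = f_n e_n \geq 0$ with
\[
\|f_n - f_n e_n\|_p^p = \tau\big(f_n^p \chi_{[0,\eta]}(f_n)\big) \leq \eta^p \,\tau\big(\chi_{[0,\eta]}(f_n)\big) \leq \eta^p\,\tau(\one),
\]
which tends to $0$ as $\eta \to 0$ (recall $\tau$ is normalized, or at worst the relevant spectral projection has trace $\leq \tau(\one) < \infty$ in the finite case; in the semifinite case replace $\chi_{[0,\eta]}(f_n)$ with $\chi_{(\eta_0,\eta]}(f_n)$ for an auxiliary lower cutoff $\eta_0$, using $\tau\big(\chi_{(\eta_0,\infty)}(f_n)\big) \leq \eta_0^{-p}$). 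Fixing $\eta$ with $\|f_n - f_n e_n\|_p < 1/2$ for all $n$, we get $\|f_n e_n\|_p \geq 1/2$.

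Next, $e_n \leq \eta^{-1} f_n$ as positive operators, so for every $x \in L_q(\tau)_+$ (with $1/p + 1/q = 1$, interpreting $q = \infty$ if $p = 1$) we have $0 \leq \tau(e_n x) \leq \eta^{-1}\tau(f_n x) \to 0$ by weak nullity of $(f_n)$; by polarization $\tau(e_n x) \to 0$ for all $x \in L_q(\tau)$, i.e. $e_n \to 0$ weakly. Since the $(e_n)$ are projections converging weakly to $0$, condition (2) in the list preceding Lemma \ref{l:equi} applied to the $p$-equiintegrable set $K = \{f_n : n \in \N\}$ gives
\[
\|f_n e_n\|_p = \|e_n f_n e_n\|_p \leq \sup_{h \in K} \|e_n h e_n\|_p \longrightarrow 0,
\]
contradicting $\|f_n e_n\|_p \geq 1/2$. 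Hence $(f_n)$ is norm null.

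For the final assertion, suppose $(f_n) \subset L_p(\tau)_+$ were equivalent to the standard $\ell_2$-basis. Then $(f_n)$ is seminormalized and weakly null (a standard $\ell_2$-basis is weakly null, $L_p(\tau)$ being reflexive for $1 < p < \infty$, and for $p = 1$ no weakly null $\ell_2$-basis issue arises since $\ell_2 \not\hookrightarrow L_1$ as a sequence of disjoint-type behavior — in any case we only use $1 \leq p < \infty$ and argue directly). If $\ball(\span[f_n])$ — equivalently $\{f_n/\|f_n\|\}$ — failed to be $p$-equiintegrable, then by \cite[Theorem 5.1]{RX} a subsequence of $(f_n)$ would be equivalent to the $\ell_p$-basis, impossible for $p \neq 2$ since it is already equivalent to the $\ell_2$-basis; and for $p = 2$, $L_2(\tau)$ is a Hilbert space in which positive elements of norm one cannot be an orthonormal-type sequence without $\tau(f_m f_n) = 0$ for $m \neq n$, forcing $f_m f_n = 0$ and hence, by $f_n \geq 0$, disjoint supports, whereas then $\tau(f_n) = \tau(f_n \cdot \one) \to 0$ would give $f_n \to 0$ weakly against $\one \in L_2(\tau)$ — contradicting seminormalization in $L_2$. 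So $\{f_n\}$ is $p$-equiintegrable and weakly null, whence norm null by the first part, contradicting seminormalization. Thus no such sequence exists.
\end{proof}
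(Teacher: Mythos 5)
Your argument (the one inside the proof environment; the preceding ``plan'' contains several false starts that the final write-up rightly discards) is correct in the setting the paper actually works in --- a finite, normalized trace, which is the standing assumption made at the start of this subsection --- but it reaches the conclusion by a genuinely different route. The paper truncates $f_j$ from \emph{above}: $p$-equiintegrability (via Lemma \ref{l:equi}) gives $\|f_j-w_j\|_p<c$ for $w_j=\min\{f_j,C\}$, and then the interpolation $\|w_j\|_p\leq\|w_j\|_1^{1/p}\|w_j\|_\infty^{1-1/p}$ yields a uniform lower bound on $\tau(w_j)\leq\tau(f_j)$, contradicting $\tau(f_j)=\tau(f_j\cdot\one)\to 0$. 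You truncate from \emph{below}: finiteness of the trace gives $\|f_n-f_ne_n\|_p\leq\eta$ for $e_n=\chi_{(\eta,\infty)}(f_n)$; the operator inequality $e_n\leq\eta^{-1}f_n$ (this is where positivity enters for you) transfers weak nullity from $f_n$ to the projections $e_n$, so that $\tau(e_n)\to 0$; and then the definition of $p$-equiintegrability applied to the sequence $(e_n)$ kills $\|e_nf_ne_n\|_p=\|f_ne_n\|_p$, a contradiction. Both proofs use positivity once and equiintegrability once, in swapped roles: yours avoids the H\"older interpolation and the pairing against $\one$, while the paper's avoids having to verify that the spectral projections are weakly null. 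Either is acceptable in the finite-trace case.

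Two caveats. First, your parenthetical attempt to handle a genuinely semifinite (infinite) trace by inserting a lower cutoff $\eta_0$ does not work: the piece $f_n\chi_{[0,\eta_0]}(f_n)$ is then completely uncontrolled and can carry essentially all of $\|f_n\|_p$ (think of $n^{-1/p}\chi_{[0,n]}$ in $L_p(0,\infty)$), so you cannot conclude $\|f_ne_n\|_p\geq 1/2$. This is not a defect relative to the paper --- the paper's own opening claim $\lim_j\tau(f_j)=0$ equally requires $\one\in L_q(\tau)$, i.e.\ a finite trace, when $p>1$ --- but you should assume finiteness rather than claim to cover the semifinite case. Second, your treatment of the ``in particular'' clause is overbuilt and partly incorrect: equivalence to the standard $\ell_2$-basis does not force $\tau(f_mf_n)=0$, and for $p=2$ disjointly supported normalized positive elements of $L_2(0,1)$ are literally orthonormal, so that clause cannot be read without the equiintegrability hypothesis. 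All that is needed (and all the paper uses) is the one-line observation that an $\ell_2$-basic sequence is seminormalized and weakly null but not norm null, hence by the main statement it cannot be both positive and $p$-equiintegrable.
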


\begin{proof}
Consider a weakly null sequence $(f_j) \subset L_p(\tau)_+$. Then
$\lim_n \tau(f_j) = 0$. The case of $p=1$
is the easiest to handle: $\|f_j\|_1 = \tau(f_j) \to 0$.

Now let $1 < p < \infty$.
Suppose, for the sake of contradiction, that $(f_j)$ is not a norm null sequence.
Without loss of generality assume that $(f_j)$ is normalized.
Fix $0<c<1$. Since $(f_n)$ is $p$-equiintegrable, by Lemma~\ref{l:equi} there exists $C > 0$ so that, for any $j$, $\|f_j - w_j\|_p < c $,
where $w_j = {\boldsymbol{\varphi}}_C(f_j)$ (the function
${\boldsymbol{\varphi}}_C(t)$ is defined as $\min\{t,C\}$).
By H\"older Inequality,
$\|w_j\|_{p} \leq \|w_j\|_1^{1/p} \|w_j\|_\infty^{1-1/p}$, hence
$$
\tau(f_j) \ge \tau(w_j) = \|w_j\|_1 \geq
 \|w_j\|_{p}^{p} \|w_j\|_\infty^{1-p} > (1-c)^{p} C^{1-p} .
$$
This contradicts $\lim_j \tau(f_j) = 0$.
\end{proof}

\begin{proof}[Proof of Theorem \ref{t:positive}]
We have to show that any strictly singular positive
$T \in B(L_{u_1}(\tau_1),L_{u_2}(\tau_2))$ is compact.
The case of $u_2 \leq 2 \leq u_1$ follows from Proposition \ref{p:SS=K},
so only the case of $u_1$ and $u_2$ being ``on the same side of $2$'' remains.
Throughout the proof, for $s \in \{1,2\}$, we write $v_s = u_s/(u_s-1)$
(that is, $1/u_s + 1/v_s = 1$). Then $v_2 \geq v_1$.

First consider $2 \leq u_2 \leq u_1 < \infty$.
Without loss of generality, we can assume $\|T\| \leq 1$.
Suppose, for the sake of contradiction, that $T$ (or equivalently, $T^*$) is not
compact. Note that $T$ maps $L_{u_1}(\tau_1)_{sa}$ into $L_{u_2}(\tau_2)_{sa}$.
Then there exists a weakly null normalized sequence $(x_k)$ in
$L_{u_1}(\tau_1)_{sa}$, so that $\|T x_k\| > 6 c > 0$ for any $k$.
By passing to a subsequence twice, and invoking Corollary \ref{c:uncond}, 
we can assume that the sequences $(x_k)$ and $(T x_k)$ are unconditional.
Furthermore, by \cite[Proposition 5.4]{RX}, $(x_k)$ ($(Tx_k)$) is equivalent
either to the $\ell_2$-basis, or to the $\ell_{u_1}$-basis (respectively,
either to the $\ell_2$-basis, or to the $\ell_{u_2}$-basis).
As $T$ is bounded,
and strictly singular, only one possibility is open to us: $(x_k)$ and $(T x_k)$ are
equivalent to the $\ell_2$-basis, and the $\ell_{u_2}$-basis, respectively.

Applying Lemma \ref{l:dichotomy} (and passing to a subsequence again
if necessary), we conclude that there exist mutually
orthogonal projections $p_k$ so that $\|T x_k - y_k\| < 100^{-k} c$,
where $y_k = p_k (T x_k) p_k$. Find a sequence of positive norm one
elements $z_k \in L_{u_2}^*(\tau_2) = L_{v_2}(\tau_2)$,
so that $z_k = p_k z_k p_k$ for any $k$, and
$|\tau_2(z_k y_k)| \geq \|y_k\|/2 > 5c/2$.
Then, by passing to a subsequence, we may assume that the inequality
\begin{eqnarray*}
\|T^*z_j\| & \ge & \big| \tau_1\big((T^* z_j) \cdot x_j\big)\big| =
    \big| \tau_2(z_j \cdot Tx_j)\big|  \\
                 & \ge & \Big| | \tau_2(z_j y_j)|- | \tau_2(z_j \cdot (Tx_j-y_j)| \Big|
 >2c
\end{eqnarray*}
holds for any $j$.
Note that the sequence $(z_j)$ is equivalent to the $\ell_{v_2}$-basis,
hence weakly null. The sequence $(T^* z_j)$ is weakly null as well.
By passing to a subsequence if necessary, we can assume $(T^* z_j)$ is
unconditional. This sequence has no subsequences equivalent
to the $\ell_{v_1}$-basis. Indeed, if there existed such a subsequence,
then $T$ would not be strictly singular ($v_1 = v_2$), or bounded
($v_2 > v_1$). By \cite[Theorem 5.3]{RX}, $(T^* z_j)$ is
$v_1$-equiintegrable. This contradicts Lemma~\ref{pos_eqi}.

Now suppose $1 < u_2 \leq u_1 \leq 2$.
Consider $T \in \iSS(L_{u_1}(\tau_1),L_{u_2}(\tau_2))_+$.
Note that $L_{u_1}(\tau_1)^* = L_{v_1}(\tau_1)$ is subprojective
(that is, any infinite dimensional subspace $X \subset L_{v_1}(\tau_1)$ contains
a further infinite dimensional subspace $Y$, complemented in $L_{v_1}(\tau_1)$),
hence, by \cite[Theorem 7.54(ii)]{Ai}, $T^*$ is strictly singular.
By the above, $T^*$ is compact, hence so is $T$.
\end{proof}

\section{Operators on Schatten-Lorentz spaces}\label{s:lore}

In this section, we consider operators on Schatten spaces $\se$, where
$\ce = \lore(w,p)$ is a Lorentz space (defined in Section \ref{s:intro}).
For more information,
the reader is referred to \cite[Section 4.e]{LT1}. In particular, we
will often use \cite[Proposition 4.e.3]{LT1}: $\lore(w,p)$ is saturated
with complemented block subspaces, isomorphic to $\ell_p$.

\begin{theorem}\label{thm:lor SS=IN}
For $T \in B(\cs_{\lore(w,p)})$, the following statements are equivalent:
\begin{enumerate}
 \item
  There exists a subspace $E$ of $\cs_{\lore(w,p)}$, isomorphic
  to either $\ell_2$ or $\ell_p$, so that $T|_E$ is an isomorphism, and
  both $E$ and $T(E)$ are complemented.
 \item
  $T$ is not strictly singular.
 \item
  $T$ is not inessential.
\end{enumerate}
\end{theorem}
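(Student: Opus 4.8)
The three conditions split naturally: $(1)\Rightarrow(2)$ is trivial (an isomorphism on an infinite-dimensional subspace is not strictly singular), and $(2)\Rightarrow(3)$ is the general inclusion $\iSS\subseteq\iIN$ recalled in the introduction, so the whole content is $(3)\Rightarrow(1)$, or contrapositively: if $T$ is strictly singular then $T$ is inessential. Wait --- that is automatic. So the real implication to prove is $(\neg 1)\Rightarrow(\neg 3)$ is false, i.e.\ we must show that \emph{not inessential} forces condition $(1)$; equivalently, since $(1)\Rightarrow(2)\Rightarrow(3)$ already gives $(1)\Rightarrow(3)$ and hence $\neg(3)\Rightarrow\neg(1)$ would be the wrong direction, the cycle we need is $(3)\Rightarrow(2)\Rightarrow(1)$. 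The step $(3)\Rightarrow(2)$, that inessential operators on $\cs_{\lore(w,p)}$ are strictly singular, and the step $(2)\Rightarrow(1)$, the structural characterization of non-strictly-singular operators, are the two substantive tasks; the first will be the main obstacle.

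\textbf{Step $(2)\Rightarrow(1)$.} Suppose $T$ is not strictly singular, so $T$ fixes a subspace $X\subset\cs_{\lore(w,p)}$. I would run the Kadec--Pelczynski-type dichotomy for $\cs_{\lore(w,p)}$ exactly as in the proof of Theorem~\ref{t:main}: either $\ball(X)$ carries a seminormalized sequence spread across mutually orthogonal projections (the ``disjoint/$\ell_p$'' alternative), or it is ``equiintegrable'' and yields an $\ell_2$-behaviour. In the first case, \cite[Proposition 4.e.3]{LT1} (saturation by complemented $\ell_p$-block subspaces) and the block-projection argument from Case 1 of Theorem~\ref{t:main} (invoking \cite[Proposition 2.a.1]{LT1} to pull a complemented block structure back through $T^{-1}$) produce $E\sim\ell_p$ with $E$ and $T(E)$ complemented. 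In the second case, one extracts an unconditional weakly null sequence $(g_n)$ which --- using that $\cs_{\lore(w,p)}$ has cotype $2$ (it is built on an order-continuous space $2$-convex up to a Lorentz weight) together with an upper $\ell_2$-estimate coming from the equiintegrability --- is equivalent to the $\ell_2$-basis, and so is $(Tg_n)$; an analogue of Proposition~\ref{p:complem} (a copy of $\ell_2$ in such a space, or its dual, has a complemented subsequence) upgrades this to the complementation of $E$ and $T(E)$. This is essentially a transcription of the $1<p<2$ argument of Theorem~\ref{t:main}, with $L_p(\tau)$ replaced by $\cs_{\lore(w,p)}$ and the non-commutative Kadec--Pelczynski theorems of \cite{RX} replaced by their Lorentz analogues; the only genuinely new bookkeeping is handling the case $p>2$ (where $\ell_p$ itself is the ``spread'' alternative) and making sure the cotype-$2$ input is still available, which it is since $\lore(w,p)$ for $p<\infty$ never contains $c_0$.

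\textbf{Step $(3)\Rightarrow(2)$.} This is where I expect the difficulty. The plan is to show that $\cs_{\lore(w,p)}$ has the property that every inessential operator on it is strictly singular --- a property that holds whenever the space is both \emph{subprojective} and \emph{superprojective}, or more simply whenever the space has a nice enough internal structure that $\iSS=\iIN$. Concretely, if $T$ is not strictly singular, then by Step $(2)\Rightarrow(1)$ there is a complemented $E\sim\ell_p$ or $\ell_2$ with $T|_E$ an isomorphism onto a complemented $T(E)$; writing $P$ for a projection onto $T(E)$ and $\iota$ for the inclusion $E\hookrightarrow\cs_{\lore(w,p)}$, the operator $(T|_E)^{-1}PT\colon\cs_{\lore(w,p)}\to E\hookrightarrow\cs_{\lore(w,p)}$ (composed suitably with a projection onto $E$) is a projection-like idempotent of infinite rank through which $T$ factors an isomorphic copy of the identity on $\ell_p$ or $\ell_2$, and one then exhibits $U$ with $I+UT$ not Fredholm by letting $U$ build the identity on that infinite-dimensional complemented piece --- contradicting $T\in\iIN$. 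So in fact $(2)\Rightarrow(1)$ already packages enough structure to defeat inessentiality, and the argument is: $\neg(2)\Rightarrow\neg(1)$ is false, hence $\neg(2)$ (i.e.\ $T$ strictly singular) $\Rightarrow(3)$ trivially, while $\neg(2)\Rightarrow(1)\Rightarrow$ ``$T$ fixes a complemented $\ell_p$ or $\ell_2$ with complemented image'' $\Rightarrow\neg(3)$. Thus $(3)\Leftrightarrow(2)$ and the cycle closes. The main obstacle is purely the Lorentz-space Kadec--Pelczynski dichotomy needed for $(2)\Rightarrow(1)$: I would either cite it from the literature on symmetric operator spaces or prove it directly from \cite[Section 4.e]{LT1} together with the fact that disjointly supported vectors in $\cs_{\lore(w,p)}$ span $\ell_p$-copies, and I would need to be careful that the complementation in Proposition~\ref{p:complem}'s analogue survives when $p>2$, where one should pass instead through the predual $\cs_{\lore(w,p)}^*$.
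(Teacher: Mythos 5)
Your logical bookkeeping in the opening paragraph is tangled: $(3)\Rightarrow(2)$ is just the contrapositive of the general inclusion $\iSS\subseteq\iIN$ and is therefore free, $(1)\Rightarrow(3)$ follows from the standard characterization of inessential operators (an operator that is an isomorphism on a complemented subspace with complemented image cannot be inessential), and the only substantive implication is $(2)\Rightarrow(1)$. You do eventually land on this, but only after several reversed implications, and you repeatedly describe ``inessential $\Rightarrow$ strictly singular'' as the hard step when in fact it is a formal consequence of $(2)\Rightarrow(1)$ together with $(1)\Rightarrow(3)$.

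The genuine gap is in your $(2)\Rightarrow(1)$. You propose to transplant the equiintegrability/cotype-$2$ argument from Theorem~\ref{t:main}, but that argument does not survive the passage from $L_p(\tau)$ to $\cs_{\lore(w,p)}$. First, $\cs_{\lore(w,p)}$ does \emph{not} have cotype $2$ when $p>2$ (it contains complemented copies of $\ell_p$), and your justification --- that $\lore(w,p)$ does not contain $c_0$ --- only yields \emph{finite} cotype, which is not enough to run the lower $\ell_2$-estimate in Case 2 of Theorem~\ref{t:main}. Second, the Raynaud--Xu Kadec--Pelczynski machinery (\cite{RX}) you lean on is specific to non-commutative $L_p$ over a finite trace; its ``Lorentz analogue'' is precisely the missing ingredient, not something available to cite, and the trace on $B(H)$ is not finite, so even the equiintegrability dichotomy changes character (cf.\ Remark~\ref{r:equi}). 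The paper avoids all of this by a different route: the structure theorem of \cite{OS_SP} (every subspace of $\cs_{\lore(w,p)}$ contains a further subspace sitting inside a complemented copy of $\ell_2\oplus\lore(w,p)$), then the elementary Lemma~\ref{l:l_2+lor} (every subspace of $\ell_2\oplus\lore(w,p)$ contains a complemented copy of $\ell_2$ or $\ell_p$), and finally the gliding-hump Lemma~\ref{l:twice compl}, which is what actually delivers the simultaneous complementation of $E$ and $T(E)$. Your plan for that last point --- ``an analogue of Proposition~\ref{p:complem}'', passing to the predual when $p>2$ --- is unsubstantiated for the same reason: Proposition~\ref{p:complem} is itself powered by the $L_q$ Kadec--Pelczynski dichotomy, which you do not have in the Schatten--Lorentz dual. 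Without a proved subspace dichotomy for $\cs_{\lore(w,p)}$ (or the \cite{OS_SP} input replacing it), the proof does not close.
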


The following lemmas may be known, but we haven't been able to find them in the
literature.

\begin{lemma}\label{l:twice compl}
Suppose $X$ and $Y$ are Banach spaces, and $X_0$ is a complemented subspace
of $X$ isomorphic to $\ell_p$ ($1 \leq p < \infty$) or $c_0$. Suppose,
furthermore, that $T$ is an isomorphism from $X_0$ onto $Y_0 \subset Y$,
and $Y_0$ contains a subspace $Y_1$, complemented in $Y$. Then $X_0$ contains
a subspace $E$, isomorphic to $\ell_p$ (or $c_0$), so that both $E$ and $T(E)$
are complemented.
\end{lemma}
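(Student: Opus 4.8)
The plan is to push the complemented subspace $Y_1 \subset Y_0$ back through the isomorphism $T$ to get a subspace $E_0 = T^{-1}(Y_1)$ of $X_0$, and then to use the structure of $\ell_p$ (or $c_0$) to find a nice complemented subspace $E \subset E_0$ whose image is still complemented in $Y$. First I would observe that, since $X_0 \cong \ell_p$ (or $c_0$), the subspace $E_0$ is itself an infinite-dimensional subspace of a space with a symmetric (in particular unconditional) basis, so by the classical Bessaga--Pe{\l}czy{\'n}ski selection principle $E_0$ contains a normalized basic sequence $(e_n)$ equivalent to a block basis of the canonical basis of $\ell_p$ (or $c_0$). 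Any such block basis is again equivalent to the canonical basis of $\ell_p$ (or $c_0$), and — crucially — its closed span is complemented in $X_0$, hence in $X$, because the canonical basis of $\ell_p$ ($c_0$) is perfectly homogeneous and block bases span complemented subspaces (this is the content of \cite[Proposition 2.a.1]{LT1}, already invoked in the proof of Theorem~\ref{t:main}). Set $E = \overline{\span}[e_n]$.

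Next I would check that $T(E)$ is complemented in $Y$. By construction $E \subset E_0 = T^{-1}(Y_1)$, so $T(E) \subset Y_1$. Since $Y_1$ is complemented in $Y$, say via a bounded projection $R : Y \to Y_1$, it suffices to produce a bounded projection from $Y_1$ onto $T(E)$ and compose. But $T$ restricted to $E$ is an isomorphism onto $T(E)$, and $(Te_n)$ is a seminormalized sequence in $Y_1$ equivalent to the canonical basis of $\ell_p$ (or $c_0$); transporting the canonical projection of $\ell_p$ ($c_0$) onto $\overline{\span}[e_n]$ through $T$ gives a bounded projection $P_E : E_0 \to E$, and then $T P_E T^{-1} : T(E_0) \to T(E)$ is a bounded projection. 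Extending it to $Y_1$: note $T(E_0) = Y_1$, so $T P_E T^{-1}$ is already defined on all of $Y_1$, and $T P_E T^{-1} \circ R : Y \to T(E)$ is the desired projection onto $T(E)$. Thus both $E$ and $T(E)$ are complemented, completing the proof.

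The only genuine subtlety — and the step I would be most careful about — is the passage from "$E_0$ is an infinite-dimensional subspace of $\ell_p$ (or $c_0$)" to "$E_0$ contains a block-equivalent, complemented copy of $\ell_p$ (or $c_0$)". This is standard but rests on two facts that must be cited correctly: that every infinite-dimensional closed subspace of a space with an unconditional basis contains a sequence equivalent to a block basis (\cite[Proposition 1.a.12]{LT1}, the gliding-hump argument), and that for $\ell_p$ and $c_0$ every normalized block basis spans a complemented subspace and is equivalent to the unit vector basis (\cite[Proposition 2.a.1]{LT1}). Everything else is bookkeeping: the norms of the various projections are controlled by the unconditional constant, by $\|T\|\,\|T^{-1}\|$, and by the norm of $R$, and the separate cases $1 \le p < \infty$ and $c_0$ are handled identically.
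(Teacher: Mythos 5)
Your argument is correct, but it is organized differently from the paper's. The paper works on the $Y$ side: it takes the basis $(y_j)=(Tx_j)$ of $Y_0$, runs a gliding-hump argument to produce normalized blocks $v_k$ of $(y_j)$ lying within $\vr_k$ of vectors $v_k'\in Y_1$, applies the Small Perturbation Principle twice (first to see that $F'=\span[v_k']$ is complemented in $Y_0$, hence in $Y$ via the projection onto $Y_1$, and then to transfer this back to $F=\span[v_k]$), and finally pulls back to $E=T^{-1}(F)$, which is a block subspace of $X_0$ and therefore complemented. You instead work on the $X$ side: you pull $Y_1$ back to $E_0=T^{-1}(Y_1)$, which is a closed infinite-dimensional subspace of $X_0\cong\ell_p$ (or $c_0$), invoke the standard fact that any such subspace contains a further subspace $E$ isomorphic to $\ell_p$ (resp.\ $c_0$) and complemented in the ambient $\ell_p$, and then obtain complementation of $T(E)$ for free by factoring: if $Q:X_0\to E$ is a projection and $R:Y\to Y_1$ is a projection, then $T\,(Q|_{E_0})\,T^{-1}R$ is a bounded projection of $Y$ onto $T(E)$, using that $T(E_0)=Y_1$ exactly. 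The gain of your arrangement is that $T(E)$ sits \emph{inside} $Y_1$ rather than merely close to it, so no perturbation argument is needed on the $Y$ side at all; the perturbation machinery is confined to the single citation of the classical subspace theorem for $\ell_p$/$c_0$. The only blemish is the phrase ``transporting the canonical projection \dots through $T$'': the projection $P_E$ onto $E$ lives entirely in $X_0$ and has nothing to do with $T$; what you mean (and what your formula $TP_ET^{-1}R$ actually uses) is simply the restriction of a projection $X_0\to E$ to $E_0$. With that wording fixed, the proof is complete and, if anything, slightly cleaner than the one in the paper.
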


\begin{proof}[Sketch of a proof]
We work with $\ell_p$ spaces, since the case of $c_0$ is handled similarly.
Let $(x_j)$ be a basis in $X_0$, equivalent to the $\ell_p$-basis.
Then,  the vectors $y_j = T(x_j)$  form a basis in $Y_0$, once again equivalent to
the $\ell_p$-basis. Let $\kappa$ be the basis constant of $(y_j)$, and
$\vr_k = (2 + 2\kappa)^{-1} 10^{-k}$ ($k \in \N$).
By the ``gliding hump'' argument (cf. the proof of \cite[Proposition 1.a.11]{LT1}),
there exists a normalized sequence $(v_k)$, of the
form $v_k = \sum_{j=N_k+1}^{N_{k+1}} \alpha_j y_j$, so that, for any $k$,
there exists $v_k^\prime \in Y_1$, satisfying $\|v_k - v_k^\prime\| < \vr_k$.
By \cite[Section 2.a]{LT1}, $F = \span[v_k : k \in \N]$ is complemented in $Y_0$,
and isomorphic to $\ell_p$.
By the Small Perturbation Principle, $F^\prime = \span[v_k^\prime : k \in \N]$
is complemented in $Y_0$. Combining the projections from $Y$ onto $Y_1$, and from
$Y_1 \subset Y_0$ onto $F^\prime$, we see that $F^\prime$ is complemented in $Y$.
Another application of the  Small Perturbation Principle shows that $F$ is complemented
in $Y$ as well. The space $E = T^{-1}(F)$ is spanned by the vectors
$\sum_{j=N_k+1}^{N_{k+1}} \alpha_j x_j$, forming a block basis in $X_0$.
Thus, $E$ is complemented in $X_0$, hence also in $X$.
\end{proof}

\begin{lemma}\label{l:l_2+lor}
Any subspace $E$ of $\ell_2 \oplus \lore(w,p)$ contains a further subspace $F$,
which is isomorphic either to $\ell_2$ or $\ell_p$, and complemented in
$\ell_2 \oplus \lore(w,p)$.
\end{lemma}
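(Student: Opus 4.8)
The plan is to reduce the statement about an arbitrary subspace $E \subset \ell_2 \oplus \lore(w,p)$ to a dichotomy based on how the canonical projections behave on $E$. Write $X = \ell_2 \oplus \lore(w,p)$, let $P_1 : X \to \ell_2$ and $P_2 : X \to \lore(w,p)$ be the coordinate projections, and distinguish two cases according to whether $P_2|_E$ is strictly singular or not.

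If $P_2|_E$ is \emph{not} strictly singular, then there is a subspace $E_0 \subset E$ on which $P_2$ is an isomorphism; its image $P_2(E_0)$ is a subspace of $\lore(w,p)$. By \cite[Proposition 4.e.3]{LT1}, $\lore(w,p)$ is saturated with complemented block subspaces isomorphic to $\ell_p$, so by a gliding-hump argument we may pass to a further subspace so that $P_2(E_0)$ contains a subspace $Y_1 \cong \ell_p$ that is complemented in $\lore(w,p)$ (hence in $X$). Now $E_0 \cong P_2(E_0) \supset$ a copy of $\ell_p$, so $E_0$ itself contains a subspace isomorphic to $\ell_p$ which is carried by the isomorphism $P_2|_{E_0}$ onto (a slight perturbation of) a complemented copy of $\ell_p$; Lemma \ref{l:twice compl}, applied with $T = P_2|_{E_0}$, then produces $F \subset E_0$ isomorphic to $\ell_p$ with both $F$ and $P_2(F)$ complemented. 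Since $F$ is complemented in $X$ via $P_2(F)$ being complemented and $P_2|_F$ an isomorphism (compose the projection onto $P_2(F)$ with $(P_2|_F)^{-1}$ and with the inclusion), we are done in this case with $F \cong \ell_p$.

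If $P_2|_E$ \emph{is} strictly singular, then I would argue that $P_1|_E$ is an isomorphism on some subspace $E_0 \subset E$: indeed, since $P_1|_E + P_2|_E$ is (the inclusion of $E$, which is) bounded below and $P_2|_E$ is strictly singular, $P_1|_E$ is not strictly singular, so choose $E_0 \subset E$ with $P_1|_{E_0}$ an isomorphism onto a subspace of $\ell_2$. Every subspace of $\ell_2$ is isomorphic to $\ell_2$ and complemented in $\ell_2$ (hence in $X$), so $P_1(E_0) \cong \ell_2$ is complemented in $X$; by a further small-perturbation / Lemma \ref{l:twice compl}-style argument with $T = P_1|_{E_0}$ we obtain $F \subset E_0$ isomorphic to $\ell_2$ with both $F$ and $P_1(F)$ complemented, whence $F$ is complemented in $X$. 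This gives the conclusion with $F \cong \ell_2$.

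The main obstacle I expect is the passage, in the first case, from ``$P_2(E_0)$ is \emph{some} infinite-dimensional subspace of $\lore(w,p)$'' to ``$P_2(E_0)$ contains a complemented-in-$\lore(w,p)$ copy of $\ell_p$.'' Saturation by \cite[Proposition 4.e.3]{LT1} gives complemented block subspaces of $\lore(w,p)$ isomorphic to $\ell_p$, but these live inside $\lore(w,p)$, not a priori inside the given subspace $P_2(E_0)$; so one needs a gliding-hump / perturbation argument to find, inside $P_2(E_0)$, a normalized block-like sequence spanning such a complemented $\ell_p$-copy, much as in the proof of Lemma \ref{l:twice compl}. Once that reduction is in place, Lemma \ref{l:twice compl} does the remaining bookkeeping (producing $F$ with $F$ and $T(F)$ simultaneously complemented), and the verification that $F$ is then complemented in the whole space $X$ is routine. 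A secondary point to handle carefully is the dichotomy step itself: one must note that $E$, being a closed subspace, embeds isometrically in $X$, so the restriction to $E$ of $P_1 \oplus P_2 = \mathrm{id}_X$ is bounded below, justifying that at least one of $P_1|_E$, $P_2|_E$ fixes a subspace.
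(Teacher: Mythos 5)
Your argument is correct and is essentially the paper's proof: the paper runs the same dichotomy on the $\ell_2$-coordinate projection restricted to $E$ (strictly singular or not) rather than on the Lorentz-coordinate one, and in each case complements the pulled-back subspace $F$ by composing the projection onto $P_i(F)$ with $(P_i|_F)^{-1}$ and the inclusion, exactly as you do. The ``main obstacle'' you flag is precisely the content of the cited \cite[Proposition 4.e.3]{LT1} (every infinite-dimensional subspace of $\lore(w,p)$ contains a copy of $\ell_p$ complemented in $\lore(w,p)$), so no extra gliding-hump work is needed; note only that Lemma \ref{l:twice compl} is not literally applicable with $X_0=E_0$ (it requires $X_0$ to be a complemented copy of $\ell_p$), but your direct composition argument renders that invocation unnecessary.
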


\begin{proof}
For the sake of brevity, let $X = \ell_2$ and $Y = \lore(w,p)$. Denote by $P_X$ and $P_Y$
the canonical projections from $X \oplus Y$ onto $X$ and $Y$, respectively.

Suppose first that $E$ contains a subspace $G$ so that $P_X|_G$ is an isomorphism.
Then $G$ is a Hilbert space. Moreover, $G$ is complemented in $X \oplus Y$. Indeed,
let $Q$ be the orthogonal projection from $X$ onto $P_X(G)$. Then
$P = P_X^{-1} P P_X$ is a projection onto $G$.

Now suppose $P_X|_E$ is strictly singular. Then $E$ contains a subspace $G$, so that
$P_X|_G$ is compact, and has norm less than $1/2$. Then $P_Y|_G$ is an isomorphism.
$G^\prime = P_Y(G)$ is a closed subspace of $Y$, hence it must contain a subspace $F^\prime$,
isomorphic to $\ell_p$, and complemented in $Y$ by a projection $Q$. Then
$F = P_Y^{-1}(F^\prime) \subset G \subset E$ is isomorphic to $\ell_p$, and
complemented in $X \oplus Y$ (one can use the projection $P_Y^{-1} Q P_Y$).
\end{proof}

\begin{proof}[Proof of Theorem \ref{thm:lor SS=IN}]
The implications $(1) \Rightarrow (3) \Rightarrow (2)$ are clear.
It remains to establish $(2) \Rightarrow (1)$. For the sake of brevity,
set $\ce = \lore(w,p)$. 

Suppose $T$ is not strictly singular -- that is, $T$ fixes $X \subset \se$.
\cite{OS_SP} shows that, by passing to a subspace if necessary,
we can assume that $X$ embeds into a complemented copy of $\ell_2 \oplus \ce$.
By Lemma \ref{l:l_2+lor}, we can furthermore assume that $X$ is complemented, and
isomorphic to either $\ell_2$ or $\ell_p$. An application of Lemma \ref{l:twice compl}
finishes the proof.
\end{proof}

\begin{remark}\label{r:lore p_1 p_2}
One can similarly prove that for any $T \in  B(\cs_{w_1,p_1}, \cs_{w_2,p_2})$
$(p_1 \neq p_2)$, $T$ is not strictly singular iff $T$ fixes $E$,
isomorphic to $\ell_2$, so that both $E$ and $T(E)$ are complemented.
\end{remark}

Next we show that, on Schatten-Lorentz spaces, the ideals of compact,
strictly singular, and finitely strictly singular operators need not coincide.

\begin{proposition}\label{p:FSS not SS}
Suppose $1 \leq p_1, p_2 < \infty$. If either $p_1 < \max\{2,p_2\}$, or $p_2 > 2$,
then
$$
\iK(\cs_{\lore(w_1,p_1)}, \cs_{\lore(w_2,p_2)})
 \subsetneq \iFSS(\cs_{\lore(w_1,p_1)}, \cs_{\lore(w_2,p_2)})
 \subsetneq \iSS(\cs_{\lore(w_1,p_1)}, \cs_{\lore(w_2,p_2)}) .
$$
\end{proposition}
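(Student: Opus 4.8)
The plan is to exhibit, for each pair $(p_1,p_2)$ satisfying the hypothesis, explicit non-compact operators witnessing the two strict inclusions, and then to argue that all such operators are strictly singular, respectively finitely strictly singular. The natural candidates are formal identity maps between $\ell_p$-type block subspaces of the Schatten-Lorentz spaces: recall from \cite[Proposition 4.e.3]{LT1} that $\lore(w_i,p_i)$ is saturated with complemented block subspaces isomorphic to $\ell_{p_i}$, so by conjugating with suitable matrix units these lift to complemented subspaces of $\cs_{\lore(w_i,p_i)}$ isomorphic to $\ell_{p_i}$, and similarly $\cs_{\lore(w_i,p_i)}$ contains complemented copies of $\ell_2$ via diagonal-type embeddings (or more precisely, via the span of a block-diagonal family of rank-one operators arranged to mimic the $\ell_2$ summing behaviour of singular values). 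So I would first set up a lemma cataloguing which $\ell_r$'s embed complementably into $\cs_{\lore(w,p)}$: certainly $\ell_2$ and $\ell_p$, and crucially \emph{not} $\ell_r$ for other $r$ — this non-embedding is what forces strict singularity of the formal identities.

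For the inclusion $\iFSS \subsetneq \iSS$: when $p_1,p_2 \geq 2$ are distinct (this is covered by ``$p_2>2$'' together with choosing the $\ell_{p_1}$ or $\ell_2$ copy appropriately), take the formal identity $\ell_2 \to \ell_{p_2}$ sitting between complemented copies inside the two Schatten-Lorentz spaces; it is not compact, and it is strictly singular because any infinite-dimensional subspace of $\ell_2$ on which it were an isomorphism would give $\ell_2 \hookrightarrow \ell_{p_2}$, impossible. But it is \emph{not} finitely strictly singular: the formal identity $\ell_2^N \to \ell_{p_2}^N$ is bounded below uniformly in $N$ when $p_2 > 2$, since $\|x\|_{p_2} \geq N^{1/p_2 - 1/2}\|x\|_2$ is the wrong direction — I should instead use $\ell_{p_1} \to \ell_2$ with $p_1 < 2$, or the identity $\ell_p \to \ell_2$, where finite-dimensional lower bounds persist. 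Getting the direction of the Hölder inequality right here, and matching it to the hypotheses ``$p_1 < \max\{2,p_2\}$'' versus ``$p_2>2$'', is the bookkeeping I would need to do carefully; the point is that in each case one of the two formal inclusions $\ell_2 \leftrightarrow \ell_{p_i}$ or $\ell_{p_1}\leftrightarrow \ell_{p_2}$ is strictly singular but uniformly bounded below on finite-dimensional levels, hence not FSS.

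For the inclusion $\iK \subsetneq \iFSS$: here I would use the classical fact (Milman; see also \cite[Section 2.c]{LT1}-style arguments) that the formal identity $\ell_r \to \ell_s$ is finitely strictly singular whenever $r < s$ — or more robustly, that the formal identity from $\ell_r$ into $c_0$, and compositions thereof, are FSS but not compact. Concretely, under the hypothesis one can find $r<s$ in $\{p_1,2,p_2\}$-adjacent positions so that the formal identity $\ell_r \to \ell_s$, realized between complemented subspaces of the two Schatten-Lorentz spaces, is FSS (because $\ell_r^N \to \ell_s^N$ has a normalized vector of small image once $N$ is large — e.g. a flat vector has $\ell_s$-norm $N^{1/s-1/r}\to 0$) but visibly not compact (the basis vectors map to a non-null normalized sequence). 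I would package the ``flat vector'' estimate as a short standalone computation.

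The main obstacle I anticipate is \emph{not} constructing the witnessing operators — those are routine once the embedding lemma is in place — but rather verifying cleanly that the hypotheses ``$p_1 < \max\{2,p_2\}$ or $p_2 > 2$'' are exactly the cases where \emph{both} strict inclusions can be arranged simultaneously, i.e. checking that in the complementary range ($p_1 \geq \max\{2,p_2\}$ and $p_2 \leq 2$, which forces $p_1 \geq 2 \geq p_2$) the construction genuinely breaks down (consistent with the later Theorem \ref{t:C_p->C_q} asserting $\iSS = \iK$ there). So I would organize the proof as: (i) embedding lemma for $\ell_r \hookrightarrow \cs_{\lore(w,p)}$ complementably; (ii) a two-line lemma that formal identity $\ell_r \to \ell_s$ is FSS-not-compact for $r<s$ and SS-not-FSS in the relevant bounded-below case; (iii) a case analysis on $(p_1,p_2)$ feeding (i) and (ii) into the two inclusions, with the hypothesis used precisely to guarantee the existence of an admissible triple of exponents.
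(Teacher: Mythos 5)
Your plan for the second inclusion, $\iK \subsetneq \iFSS$, is essentially the paper's: realize the formal identity $\ell_u \to \ell_v$ ($u<v$), which is finitely strictly singular but not compact, between complemented copies of $\ell_2$ and $\ell_p$ inside the two Schatten--Lorentz spaces. That part is fine. The genuine gap is in the first inclusion, $\iFSS \subsetneq \iSS$. You propose to witness it by a formal identity $\ell_r \to \ell_s$ that is ``strictly singular but uniformly bounded below on finite-dimensional levels, hence not FSS.'' No such formal identity exists: for $r>s$ the identity is unbounded, and for $r<s$ it is finitely strictly singular --- this is exactly the classical fact you yourself invoke two paragraphs later for the other inclusion. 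The flat-vector estimate kills a normalized vector in \emph{every} large finite-dimensional subspace, so no choice of exponents among $\{p_1,2,p_2\}$, and no amount of H\"older bookkeeping, produces an SS-not-FSS operator this way. The hedging in your middle paragraph (``the direction of the H\"older inequality \ldots is the bookkeeping I would need to do carefully'') is precisely where the argument collapses.

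The missing idea is to replace $\ell_r$ by the spaces $(\oplus_n \ell_2^n)_u$: the formal identity from $(\oplus_n \ell_2^n)_u$ to $(\oplus_n \ell_2^n)_v$ ($u<v$) is strictly singular but \emph{not} finitely strictly singular, since it restricts to an isometry on each block $\ell_2^n$ and these have unbounded dimension (this is the standard example, cf.\ \cite{Pli}). To run this inside $\cs_{\lore(w,p)}$ one then needs the paper's Lemma \ref{l:l_p(l_2)}, which constructs a \emph{complemented} copy of $\ell_p(\ell_2)$ (hence of $(\oplus_n \ell_2^n)_p$) in $\cs_{\lore(w,p)}$ --- this is the main technical content of the proof, and its complementation argument (the projection $P$ built from a ``pinched'' diagonal and the functionals $w_k$) is not routine. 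Your embedding lemma only catalogues complemented copies of $\ell_2$ and $\ell_p$, which is not enough. The case analysis then reads: for $p_1<2$ use $(\oplus_n\ell_2^n)_{p_1}\to(\oplus_n\ell_2^n)_2=\ell_2$; for $p_1<p_2$ use $(\oplus_n\ell_2^n)_{p_1}\to(\oplus_n\ell_2^n)_{p_2}$; for $p_2>2$ use $\ell_2=(\oplus_n\ell_2^n)_2\to(\oplus_n\ell_2^n)_{p_2}$. Finally, your concern about verifying that the hypothesis is \emph{exactly} the right range is not part of this proposition --- it asserts only sufficiency; sharpness is handled separately by Theorem \ref{t:C_p->C_q}.
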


Note that the above conditions on $p_1$ and $p_2$ fail if and only if
$p_1 \geq 2 \geq p_2$. This result is sharp, as shown by Theorem \ref{t:C_p->C_q}
in the case when $w_1(k) = w_2(k) = 1$ for every $k$.

For $p_1 = p_2 = p$, Proposition \ref{p:FSS not SS} yields:

\begin{corollary}\label{c:FSS not SS}
If $p \in[1,\infty) \backslash \{2\}$, then
$\iFSS(\cs_{\lore(w_1,p)}, \cs_{\lore(w_2,p)})
 \subsetneq \iSS(\cs_{\lore(w_1,p)}, \cs_{\lore(w_2,p)})$.
\end{corollary}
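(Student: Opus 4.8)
The plan is to deduce the corollary directly from Proposition~\ref{p:FSS not SS}, and then to spend a few words indicating how one proves the two strict inclusions there, since the main content lives in the Proposition. For the corollary itself, I would simply observe that setting $p_1 = p_2 = p$ the hypothesis of Proposition~\ref{p:FSS not SS} becomes ``$p < \max\{2,p\}$ or $p > 2$'', which is satisfied precisely when $p \neq 2$, i.e. for every $p \in [1,\infty)\setminus\{2\}$. Invoking the Proposition then yields the chain $\iK \subsetneq \iFSS \subsetneq \iSS$ for the spaces $\cs_{\lore(w_1,p)}, \cs_{\lore(w_2,p)}$, and discarding the first (strict) inclusion leaves the assertion $\iFSS \subsetneq \iSS$, which is what we want. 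So the corollary is genuinely a one-line specialization.

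The substance is thus in Proposition~\ref{p:FSS not SS}, and I would prove the two strict inclusions there by exhibiting explicit operators. For $\iFSS \subsetneq \iSS$: since $\lore(w_i,p_i)$ is saturated with complemented block subspaces isomorphic to $\ell_{p_i}$ (\cite[Proposition 4.e.3]{LT1}), one can find complemented copies $E \cong \ell_{p_1}$ in $\cs_{\lore(w_1,p_1)}$ and $F \cong \ell_{p_2}$ in $\cs_{\lore(w_2,p_2)}$ (or copies of $\ell_2$, as needed), built out of diagonal or block-diagonal matrices; then compose a projection onto $E$ with a formal-identity-type map $\ell_{p_1} \to \ell_{p_2}$ (through $\ell_2$ when the inequalities force it) and an inclusion into $F$. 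When $p_1 \neq p_2$, or when one passes through $\ell_2$ with $p_i \neq 2$, the formal identity $\ell_r \to \ell_s$ ($r < s$) is strictly singular but \emph{not} finitely strictly singular — the latter because it acts isometrically up to constants on the span of the first $N$ unit vectors, which is $N$-dimensional for every $N$, so no dimension bound can force a small-norm vector. The condition ``$p_1 < \max\{2,p_2\}$ or $p_2 > 2$'' is exactly what guarantees such a configuration of $\ell_r \hookrightarrow \ell_s$ with $r<s$ is available inside the two Schatten-Lorentz spaces. For $\iK \subsetneq \iFSS$: one uses instead that the formal identity $i : \ell_r \to \ell_s$ with $r < s$ (equivalently the diagonal operator between appropriate block subspaces) \emph{is} finitely strictly singular — a classical fact, since on an $N$-dimensional subspace of $\ell_r$ one can find a vector that is ``spread out'' and hence has small $\ell_s$-norm relative to its $\ell_r$-norm, with the required $N(\vr)$ growing as $\vr \to 0$ — while it is plainly not compact, as it fixes the unit vector basis.

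I expect the main obstacle to be bookkeeping rather than conceptual: verifying that under the stated arithmetic condition on $(p_1,p_2)$ one can actually \emph{realize} the needed $\ell_r \hookrightarrow \ell_s$ (with $r < s$, and with $r,s$ lying in $\{2, p_1\}$ on the domain side and $\{2,p_2\}$ on the range side consistently with the complemented block structure), and that the case $p_1 \geq 2 \geq p_2$ — where no such strict inclusion should hold — is genuinely excluded. One must check, for instance, that a copy of $\ell_{p_1}$ in the domain maps, via the formal inclusion, into a legitimate subspace of $\cs_{\lore(w_2,p_2)}$ (a copy of $\ell_{p_1}$ or $\ell_2$ therein), which requires $\ell_{p_1}$ or $\ell_2$ to embed appropriately — and this is precisely where ``$p_1 < \max\{2,p_2\}$ or $p_2 > 2$'' enters. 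The FSS-versus-compact part is routine once the formal identity $\ell_r \to \ell_s$ is in place; the SS-versus-FSS part is the delicate one, hinging on the non-FSS-ness of $\ell_r \to \ell_s$, which follows from the observation that this map is an isomorphism (with uniform constants) on every $\span\{e_1,\dots,e_N\}$, precluding the defining property of FSS operators.
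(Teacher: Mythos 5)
Your reduction of the corollary to Proposition~\ref{p:FSS not SS} with $p_1=p_2=p$ is exactly what the paper does, and your check that the hypothesis of that proposition degenerates to $p\neq 2$ is correct. Had you stopped there and simply cited the proposition, the proof of the corollary would be complete.

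The genuine problem is in your sketch of the proposition itself, and precisely in the half that the corollary needs, namely $\iFSS\subsetneq\iSS$. You propose the formal identity $i:\ell_r\to\ell_s$ ($r<s$) as the witness and claim it is not finitely strictly singular because it is ``an isomorphism with uniform constants on every $\span\{e_1,\dots,e_N\}$.'' That is false: on $\span\{e_1,\dots,e_N\}$ the inverse of $i$ has norm $N^{1/r-1/s}\to\infty$, and the flat vector $N^{-1/r}\sum_{i\leq N}e_i$ is normalized in $\ell_r$ but has $\ell_s$-norm $N^{1/s-1/r}\to 0$ --- which is exactly why $i$ \emph{is} finitely strictly singular, as you yourself correctly assert two sentences later when handling $\iK\subsetneq\iFSS$ (cf.\ \cite[Proposition~3.3]{SSTT}). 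So your candidate lies in $\iFSS$ and cannot separate $\iFSS$ from $\iSS$; your sketch is internally contradictory on this point. The correct witness, used in the paper, is the formal identity from $(\oplus_n\ell_2^n)_u$ to $(\oplus_n\ell_2^n)_v$ with $u<v$: it restricts to an isometry on each block $\ell_2^n$, hence is not FSS, yet it is strictly singular (cf.\ \cite{Pli}). Realizing this operator inside the Schatten--Lorentz spaces is where the real work lies: one needs Lemma~\ref{l:l_p(l_2)}, which shows that $\cs_{\lore(w,p)}$ contains a \emph{complemented} copy of $\ell_p(\ell_2)$, and hence of $(\oplus_n\ell_2^n)_p$. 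Complemented copies of $\ell_p$ and of $\ell_2$ alone --- which is all your proposal produces --- do not suffice, so the SS-versus-FSS half of your argument is missing its essential ingredient.
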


The case of $p=2$ is excluded for a reason. Suppose, for instance, that
$w_1(k) = w_2(k) = 1$, for every $k$. Then the spaces
$\cs_{\lore(w_1,2)}$ and $\cs_{\lore(w_2,2)}$ are isometric to $\ell_2$,
and we know that $\iSS(\ell_2) = \iFSS(\ell_2) = \iK(\ell_2)$.

The following lemma (needed to prove Proposition \ref{p:FSS not SS}) may be
of independent interest.

\begin{lemma}\label{l:l_p(l_2)}
For $1 \leq p < \infty$, $\cs_{\lore(w,p)}$ contains a complemented
isomorphic copy of $\ell_p(\ell_2)$.
\end{lemma}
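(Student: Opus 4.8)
The goal is to exhibit inside $\cs_{\lore(w,p)}$ a complemented copy of $\ell_p(\ell_2)$, i.e. of the $\ell_p$-sum $\big(\bigoplus_n \ell_2\big)_{\ell_p}$. The natural strategy is to build the copy from matrix ``blocks'': choose disjoint intervals of indices and put, on the $n$-th block, a subspace of $\cs_{\lore(w,p)}$ restricted to that block of diagonal positions which is isometric (or isomorphic with uniform constants) to $\ell_2$, while the blocks interact like $\ell_p$. Since $\lore(w,p)$ is a symmetric sequence space with lower index $p$ (indeed \cite[Proposition 4.e.3]{LT1} says it is saturated with complemented block copies of $\ell_p$), a careful choice of the block lengths $d_n$ and the weights they see should make the singular-value functional behave like a nested $\ell_p(\ell_2)$ norm.

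\textbf{Key steps.} First I would recall the standard way $\ell_2$ sits inside a Schatten-type space: for a fixed finite index set $J$, the ``row'' $\{A : A = \sum_{j\in J} a_j E_{1,j}\}$ (or a column) is isometric to $\ell_2^{|J|}$ inside $\cs_{\lore(w,p)}$, because such an $A$ has a single nonzero singular value $\big(\sum |a_j|^2\big)^{1/2}$, and the norm of a rank-one operator in $\se$ is just $w(1)^{1/p}$ times that singular value. So rows (or columns) give isometric, norm-one-complemented (via compression) copies of $\ell_2^{|J|}$, and the projections are contractive. Next, place infinitely many such rows on disjoint ``stripes'': use matrix units $E_{i,j}$ with $i$ ranging over a fixed infinite set and, for the $n$-th row, $j$ ranging over a block $J_n$, with the $J_n$ pairwise disjoint; then an element $A = \sum_n \sum_{j\in J_n} a_{n,j} E_{n,j}$ has singular values that are exactly the row norms $\rho_n = \big(\sum_{j\in J_n}|a_{n,j}|^2\big)^{1/2}$ (each with multiplicity one), so $\|A\|_{\lore(w,p)} = \big(\sum_n w(n)\,\rho_n^p\big)^{1/p}$. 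That is a \emph{weighted} $\ell_p(\ell_2)$; to turn it into the genuine $\ell_p(\ell_2)$ I would instead group the rows: let the $n$-th ``super-block'' consist of $w(n)$-comparable weights, i.e. choose a rapidly increasing sequence of cut-offs $m_0 < m_1 < \cdots$ and let the $n$-th copy of $\ell_2$ be spanned by a single row supported on $\{m_{n-1}+1,\dots,m_n\}$ placed in the $k_n$-th matrix row, where $k_n$ is chosen so that the relevant weight-sum $\sum_{k=k_{n-1}+1}^{k_n} w(k)$ is essentially constant in $n$ (this uses $\sum_k w(k)=\infty$). With that normalization the ambient norm becomes equivalent to $\big(\sum_n \rho_n^p\big)^{1/p}$ — exactly $\ell_p(\ell_2)$.

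\textbf{Complementation.} For the projection, the compression $A \mapsto \sum_n P_n A Q_n$, where $P_n$ projects onto the matrix-row index $k_n$ and $Q_n$ onto the column-block $\{m_{n-1}+1,\dots,m_n\}$, is completely contractive on $\se$ for any symmetric $\ce$ (it is a conditional expectation / diagonal-block projection, contractive because it does not increase singular values: a standard fact, cf. \cite{GK,Si}), and its range is precisely the copy of $\ell_p(\ell_2)$ just constructed. So complementation is automatic once the isometry is set up. Alternatively, if one prefers a purely Banach-space route, one can invoke \cite[Proposition 4.e.3]{LT1} to find a complemented block copy $F\cong\ell_p$ of diagonal-supported elements, then ``fatten'' each basis vector of $F$ into a row, re-running the singular-value computation.

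\textbf{Main obstacle.} The one genuine subtlety is the bookkeeping with the weight sequence $w$: because $\lore(w,p)$ need not be $\ell_p$ (that happens exactly when $\lim_k w(k)=0$), the naive construction gives only a weighted $\ell_p(\ell_2)$, and one must use $\sum_k w(k)=\infty$ to renormalize the block lengths $k_n$ so that consecutive weight-sums are bounded above and below — this is where $\lore(w,p)$'s defining hypothesis is used, and where a slightly careful (but routine) choice of the cut-offs is needed. Everything else — that a single off-diagonal row is isometrically $\ell_2^d$, that disjoint rows have singular values equal to the concatenation of the row norms, and that diagonal-block compressions are contractive — is standard symmetric-ideal computation.
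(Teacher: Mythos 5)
There is a genuine gap in your main construction. A single row $A_n=\sum_{j\in J_n}a_{n,j}E_{k_n,j}$ is a rank-one operator, so it contributes exactly \emph{one} nonzero singular value, namely $\rho_n=(\sum_j|a_{n,j}|^2)^{1/2}$, no matter how long the row is. Consequently the span of disjoint rows carries the norm $\|\sum_n A_n\|=\|(\rho_n)_n\|_{\lore(w,p)}$ (note also that the Lorentz norm involves the decreasing rearrangement of $(\rho_n)$, which you dropped). Your proposed renormalization --- choosing the cut-offs $k_n$ so that $\sum_{k=k_{n-1}+1}^{k_n}w(k)$ is essentially constant --- has no effect, because the weight attached to $\rho_n$ is $w(\text{rank of }\rho_n\text{ in the rearrangement})$, one weight per block; there is no mechanism by which a block of length $m_n-m_{n-1}$ ``absorbs'' a sum of weights, since it produces only one singular value. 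So the outer norm you get is the $\lore(w,p)$-norm of the block norms, which is equivalent to the $\ell_p$-norm only when $\lim_k w(k)>0$, i.e.\ when $\lore(w,p)\cong\ell_p$ already; for $w(k)=1/k$, say, the construction does not yield $\ell_p(\ell_2)$.

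The fix is the route you gesture at only in your last ``alternative'' sentence, and it is the paper's actual proof: take a normalized block basis $(u_k)$ of $\lore(w,p)$ equivalent to the $\ell_p$-basis and complemented (LT1, Prop.~4.e.3), let $v_k$ be the \emph{diagonal} operator carrying the spectrum $(\alpha_s)_{s\in I_k}$ of $u_k$, and use as the $k$-th copy of $\ell_2$ the operators $\sum_j c_{kj}\,E_{kk}\otimes E_{1j}\otimes v_k$; then $\bigl|\sum_j c_{kj}E_{1j}\otimes v_k\bigr|=(\sum_j|c_{kj}|^2)^{1/2}E_{11}\otimes v_k$, each block contributes the whole family of singular values $(\sum_j|c_{kj}|^2)^{1/2}\alpha_s$, and the ambient norm becomes $\|\sum_k(\sum_j|c_{kj}|^2)^{1/2}u_k\|_{\lore(w,p)}\sim\|(c_{kj})\|_{\ell_p(\ell_2)}$. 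But then complementation is \emph{not} automatic: the range meets each diagonal block $\span[E_{ss}:s\in I_k]$ only in the one-dimensional subspace $\C v_k$, so the projection is not a rectangle compression. One must compose the (genuinely contractive) block compression with a map sending $x_{kj}\mapsto\langle w_k,x_{kj}\rangle v_k$ built from the functionals defining the projection onto $\span[u_k]$ in $\lore(w,p)$, and proving that this is bounded on $\se$ is the main technical content of the paper's argument (it factors the relevant sum through multiplication by an auxiliary contraction with rows $(\lambda_{kj})_j$ of $\ell_2$-norm at most one). Your proposal supplies neither the correct isomorphic embedding nor this boundedness argument.
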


\begin{proof}
We view $\ell_p(\ell_2)$ as the space of all sequences $(c_{ij})$, with
$$
\|(c_{ij})\| = \Big(\sum_i \big( \sum_j |c_{ij}|^2 \big)^{p/2} \Big)^{1/p} .
$$
For brevity, set $\ce = \lore(w,p)$. To show that $\se(\N \times \N \times \N)$
contains a complemented copy of $\ell_p(\ell_2)$,
consider a normalized block sequence $(u_k) \subset \ce$, equivalent to the
$\ell_p$-basis, and complemented in $\ce$ via a projection $Q$ (the existence
of such $(u_k)$ is guaranteed by \cite[Proposition 4.e.3]{LT1}). Write
$u_k = \sum_{s=N_k+1}^{N_{k+1}} \alpha_s \delta_s$, where $(\delta_s)$ is the
canonical basis for $\lore(w,p)$. Without loss of generality, assume $\alpha_s \geq 0$.
Consider
$$
X = \span[E_{kk} \otimes E_{1j} \otimes v_k :
  k, j \in \N] \subset \se(\N \times \N \times \N)] ,
{\textrm{  where  }}
v_k = \sum_{s=N_k+1}^{N_{k+1}} \alpha_s E_{ss} \in \se .
$$
Then the map
$$
T : \ell_p(\ell_s) \to X : (c_{kj})_{k,j \in \N} \mapsto
\sum_{k,j} c_{kj} E_{kk} \otimes E_{1j} \otimes v_k
$$
is an isomorphism. Indeed,
$$
\big(T((c_{kj})_{k,j \in \N})\big) \big(T((c_{kj})_{k,j \in \N})\big)^\star =
\sum_k E_{kk} \otimes \big(\sum_j |c_{kj}|^2 E_{11} \big) \otimes v_k v_k^\star ,
$$
hence
$$   \eqalign{
\|T((c_{kj})_{k,j \in \N})\|_{\ce}
&
=
\|\sum_k E_{kk} \otimes \big(\sum_j |c_{kj}|^2 \big)^{1/2} E_{11} \otimes v_k\|
\cr
&
=
\|\sum_k \big(\sum_j |c_{kj}|^2 \big)^{1/2} u_k\| \sim
\|(c_{kj})\|_{\ell_p(\ell_2)} .
}  $$

To define the projection onto $X$, note first that, for any symmetric
sequence space $\ce$, the space
$$
Y = \span[E_{kk} \otimes E_{1j} \otimes E_{ss} : k, j, s \in \N]
\subset \se(\N \times \N \times \N)
$$
is contractively complemented in $\se(\N \times \N \times \N)$, via
the projection $Q_d \otimes Q_r \otimes Q_d$, where $Q_d$ and $Q_r$ denote the
coordinate projections from $\se$ onto $\span[E_{kk} : k \in \N]$ and
$\span[E_{1j} : j \in \N]$, respectively. Indeed, $Q_d \otimes I_{\se}$
is contractive, by a standard ``diagonalization''  argument (also known
as ``pinching'', or ``Tong's trick'', see e.g. the proof of
\cite[Proposition 1.c.8]{LT1}). Furthermore, $Q_r \otimes I_{\se}$ is contractive:
for $x \in \se(\N \times \N)$, $Q_r \otimes I_{\se} x = p x$, where $p$ is the
orthogonal projection onto $\span[e_k \otimes e_1]$, ($(e_s)$ being the canonical
basis for $\ell_2$). Therefore,
$$
Q_d \otimes Q_r \times Q_d =
(Q_d \otimes I_{\se} \otimes I_{\se})
(I_{\se} \otimes Q_r \otimes I_{\se})
(I_{\se} \otimes I_{\se} \otimes Q_d)
$$
is contractive as well.

Next construct a projection $P$ from $Y$ onto $X$.
Let $R$ be a projection from $\lore(w,p)$ onto $\span[u_k : k \in \N]$.
By a ``diagonalization'' argument again, we can assume that
$R \delta_s \in \span[u_k]$ whenever $s \in [N_k+1, N_{k+1}]$.
Denoting by $(\delta_s^*) \in \ce^*$ the functionals biorthogonal to $(\delta_s)$,
we can find (as in \cite{CKT})
$w_k = \sum_{s = N_k+1}^{N_{k+1}} \beta_s \delta_s^* \in \ce^*$
so that, for $x \in \ce$, $Rx = \sum_k \langle w_k , x \rangle u_k$.
Define, for $(x_{kj}) \subset \se$,
$$
P\big(\sum_k E_{kk} \otimes \sum_j E_{1j} \otimes x_{kj}\big) =
\sum_k E_{kk} \otimes \sum_j E_{1j} \otimes \langle w_k, x_{kj} \rangle v_k .
$$
It remains to show that $P$ is bounded. To this end, consider 
$$
x = \sum_k E_{kk} \otimes \sum_j E_{1j} \otimes x_{kj} .
$$
Note that
$$
\big(Px\big) \big(Px\big)^\star =
\sum_k E_{kk} \otimes E_{11} \otimes \sum_j |\langle w_k, x_{kj} \rangle|^2 v_k^2 ,
$$
hence
$$  \eqalign{
\|Px\|_{\se}
&
=
\big\|\sum_k (\sum_j |\langle w_k, x_{kj} \rangle|^2)^{1/2} u_k \big\|_{\ce}
\cr
&
=
\big\|\sum_k (\sum_j \lambda_{kj} \langle w_k, x_{kj} \rangle) u_k \big\|_{\ce} =
\big\|\sum_k \langle w_k, \sum_j \lambda_{kj}x_{kj} \rangle u_k \big\|_{\ce} ,
}  $$
where
$$
\lambda_{kj} = \frac{\overline{\langle w_k, x_{kj} \rangle}}{
 (\sum_j |\langle w_k, x_{kj} \rangle|^2)^{1/2}}
$$
(with the convention $0/0 = 0$).
Next identify $\ce$ with $\ce(\N \times \N)$, and denote the standard
symmetric basis in the latter space by $(\delta_i \otimes \delta_j)_{i,j \in \N}$. Then
$u_k$ can be identified with $\tilde{u}_k = \sum_i \alpha_i \delta_k \otimes \delta_i$.
Set $\tilde{w}_k = \sum_i \beta_i \delta_k^* \otimes \delta_i^*$.
In this setting, there arises a projection $\tilde{R}$ onto
$\span[\tilde{u}_k : k \in \N]$, defined via
$$
\tilde{R} (\sum_{kj} \gamma_{kj} \delta_k \otimes \delta_j) =
\sum_k \langle \tilde{w}_k , \sum_j \gamma_{kj} \rangle \tilde{u}_k .
$$
Moreover, $\|R\| = \|\tilde{R}\|$.
For $(x_{kj})$ as above, set $\tilde{x}_{kj} = \delta_k \otimes x_{kj}$.
In other words, if $x_{kj} = \sum_s x_{kj}^{(s)} \delta_s$, then
$\tilde{x}_{kj} = \sum_s x_{kj}^{(s)} \delta_k \otimes \delta_s$. Then
$$  \eqalign{
&
\big\|\sum_k \langle w_k, \sum_j \lambda_{kj}x_{kj} \rangle u_k \big\|_{\ce} =
\big\|\sum_k \langle \tilde{w}_k,
 \sum_j \lambda_{kj} \tilde{x}_{kj} \rangle \tilde{u}_k \big\|_{\ce(\N \times \N)}
\cr
&
=
\big\| \tilde{R}
 \big(\sum_k (\sum_j \lambda_{kj} \tilde{x}_{kj}) \big) \|_{\ce(\N \times \N)} \leq
\|\tilde{R}\| \|\sum_k (\sum_j \lambda_{kj} \tilde{x}_{kj})\|_{\ce(\N \times \N)} .
}  $$
Now consider
$$
y = \sum_k E_{kk} \otimes (\sum_j \lambda_{kj} \otimes E_{j1}) \otimes I_{\ell_2}
  \in B(\ell_2(\N \times \N \times \N)) .
$$
Note that, for any $k$, $\sum_j |\lambda_{kj}|^2 \leq 1$, hence
$y$ is a contraction. Then
$$
\sum_k E_{kk} \otimes E_{11} \otimes (\sum_j \lambda_{kj} x_{kj}) = xy ,
$$
hence,
$$
\big\|\sum_k (\sum_j \lambda_{kj} \tilde{x}_{kj})\big\|_{\ce(\N \times \N)} =
\big\|\sum_k E_{kk} \otimes E_{11} \otimes (\sum_j \lambda_{kj} x_{kj})\big\|_{\se}
\leq \|x\|_{\se} ,
$$
which shows that $P$ is bounded.
\end{proof}

\begin{proof}[Proof of Proposition \ref{p:FSS not SS}]
First show that
$\iFSS(\cs_{\lore(w_1,p_1)}, \cs_{\lore(w_2,p_2)})$ is a proper subset of
$\iSS(\cs_{\lore(w_1,p_1)}, \cs_{\lore(w_2,p_2)})$. Recall first
(cf. \cite{Pli}) that, for $u < v$, the formal identity from
$(\oplus_n \ell_2^n)_u$ to $(\oplus_n \ell_2^n)_v$ is strictly singular,
but not finitely strictly singular. Now use Lemma \ref{l:l_p(l_2)}.
If $p_1 < 2$, then $\cs_{\lore(w_1,p_1)}$ contains a complemented
copy of $(\oplus_n \ell_2^n)_{p_1}$, and the formal identity from
$(\oplus_n \ell_2^n)_{p_1}$ to $(\oplus_n \ell_2^n)_2 = \ell_2$.
In the case of $p_1 < p_2$, the reasoning is similar, except that now, we use
the formal identity from $(\oplus_n \ell_2^n)_{p_1}$ to $(\oplus_n \ell_2^n)_{p_2}$.
Finally, for $p_2 > 2$, we map $(\oplus_n \ell_2^n)_2 = \ell_2$ to
$(\oplus_n \ell_2^n)_{p_2}$.

To distinguish between $\iFSS(\cs_{\lore(w_1,p_1)}, \cs_{\lore(w_2,p_2)})$ and
$\iK(\cs_{\lore(w_1,p_1)}, \cs_{\lore(w_2,p_2)})$, note that
$\cs_{\lore(w,p)}$ contains complemented copies of $\ell_2$ and $\ell_p$.
Now recall (see \cite[Proposition~3.3]{SSTT}) that, for $u < v$, the formal
identity from $\ell_u$ to $\ell_v$ is finitely strictly singular, but not compact.
\end{proof}

\section{Non-commutative $L_p$ spaces: discrete case}\label{s:Cp}

In this section we study some operator ideals on the spaces $\cs_p$.
For this, we need some notation. Let $(e_k)_{k=1}^\infty$ be the canonical
basis in $\ell_2$. Let $P_n$ be the orthogonal projection from $\ell_2$ onto
$\span[e_1, \ldots, e_n]$, and $P_n^\perp = \one - P_n$. For convenience, set
$P_0 = 0$. If $\ce$ is a non-commutative symmetric sequence space, let $Q_n$ be
the projection on $\ce$, defined via $Q_n x = P_n x P_n$. Similarly,
let $R_n x = P_n^\perp x P_n^\perp$.

\subsection{Strictly singular, compact, and weakly compact operators}
In this subsection we study when the
operator ideals, introduced in Section \ref{s:intro}, coincide. We begin by establishing:

\begin{corollary}\label{c:main}
Suppose the Banach space $X$ satisfies one of two conditions:
\begin{enumerate}[(i)]
\item
$X = \cs_p$, with $1 \leq p < \infty$.
\item
$X = L_p(\tau)$, where $1 < p < \infty$, and $\tau$ is a normal
faithful finite trace on a hyperfinite von Neumann algebra.
\end{enumerate}
Then, for $T \in B(X)$, the following statements are equivalent:
\begin{enumerate}
\item
$T$ is not strictly singular.
\item
$T$ is not inessential.
\item
$X$ contains a subspace
$E$, isomorphic to either $\ell_2$ or $\ell_p$, so that $T|_E$ is an isomorphism, and
both $E$ and $T(E)$ are complemented in $X$.
\end{enumerate}
%
%
\end{corollary}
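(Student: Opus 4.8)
The plan is to reduce Corollary~\ref{c:main} to the characterizations already established in Theorems~\ref{t:main} and~\ref{thm:lor SS=IN}, together with the standard inclusions $\iK \subseteq \iFSS \subseteq \iSS \subseteq \iIN$ recalled in the introduction. The implications $(3) \Rightarrow (2)$ and $(2) \Rightarrow (1)$ are immediate: a finite-rank perturbation of an operator that is an isomorphism on an infinite-dimensional complemented subspace $E$ with $T(E)$ complemented cannot be Fredholm (since $T|_E$ witnesses an infinite-dimensional piece on which $T$ is invertible, contradicting inessentiality via the standard argument that $I + UT$ fails to be Fredholm for a suitable $U$ built from the inverse of $T|_E$ and the projections onto $E$ and $T(E)$), and an inessential operator is in particular strictly singular. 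So the only content is $(1) \Rightarrow (3)$.

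For case $(ii)$, where $X = L_p(\tau)$ with $1 < p < \infty$ and $\tau$ a finite trace on a hyperfinite von Neumann algebra, I would simply invoke Corollary~\ref{c:uncond}, which guarantees that $L_p(\tau)$ has the USP, so that the hypothesis of Theorem~\ref{t:main} is met; then $(1) \Rightarrow (2)$ of Theorem~\ref{t:main} is exactly $(1) \Rightarrow (3)$ here (note $p = 2$ is trivial since then $L_2(\tau)$ is a Hilbert space and all these ideals coincide with $\iK$, for which the statement is classical). For case $(i)$ with $1 < p < \infty$, the space $\cs_p = \cs_{\lore(w,p)}$ with $w(k) \equiv 1$ is a Schatten--Lorentz space, and Theorem~\ref{thm:lor SS=IN} gives precisely the equivalence of ``not strictly singular'' with condition $(3)$ (the ``not inessential'' clause there also covers $(2)$). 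The case $p = 2$ is again trivial.

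The one genuinely separate case is $\cs_1$, which is excluded from Theorem~\ref{thm:lor SS=IN} (that result, via Lemma~\ref{l:l_2+lor} and \cite{OS_SP}, needs the reflexive-range structure and the splitting of subspaces into $\ell_2$ or $\ell_p$ pieces, which behaves differently for $\cs_1$). Here I would argue directly: if $T \in B(\cs_1)$ is not strictly singular, it fixes a subspace $X$; by the structure theory of subspaces of $\cs_1$ (every infinite-dimensional subspace of $\cs_1$ contains a further subspace isomorphic to $\ell_1$ or to $\ell_2$, and in either case one complemented in $\cs_1$ — using that $\cs_1$ has the Dunford--Pettis-type structure and a version of the $\ell_2 \oplus \cs_1$-type splitting, or directly from the fact that a weakly null normalized sequence in $\cs_1$ has a subsequence equivalent to the $\ell_2$-basis while a non-weakly-null one has an $\ell_1$-subsequence), pass to $E_0 \subseteq X$ isomorphic to $\ell_1$ or $\ell_2$ and complemented in $\cs_1$; then apply Lemma~\ref{l:twice compl} with $X_0 = E_0$ (using that $\ell_1$- and $\ell_2$-spans of block-type sequences in $\cs_1$ are complemented) to extract $E \subseteq E_0$ with both $E$ and $T(E)$ complemented. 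I expect this $\cs_1$ case to be the main obstacle, since it requires citing or reproving the complementation of $\ell_1$- and $\ell_2$-copies inside $\cs_1$; the cleanest route is probably to quote the relevant structural results of \cite{Ai} or \cite{OS} on subspaces of $\cs_1$ and then let Lemma~\ref{l:twice compl} do the rest.
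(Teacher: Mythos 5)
Your proposal is correct and follows essentially the same route as the paper: $(3)\Rightarrow(2)\Rightarrow(1)$ via the standard description of inessential operators and the inclusion $\iSS\subseteq\iIN$ (\cite[Theorem 7.44]{Ai}), and $(1)\Rightarrow(3)$ via Theorem~\ref{t:main} (using Corollary~\ref{c:uncond} to verify the USP hypothesis) in case (ii) and via Theorem~\ref{thm:lor SS=IN} in case (i). The one place you diverge is $\cs_1$: you treat it as excluded from Theorem~\ref{thm:lor SS=IN} and build a separate argument, whereas the paper simply applies that theorem with $w(k)\equiv 1$ and $p=1$, since $\lore(w,1)=\ell_1$ is a legitimate Lorentz space there and all the ingredients (Lemma~\ref{l:twice compl} is stated for $1\leq p<\infty$ and $c_0$, Lemma~\ref{l:l_2+lor} needs nothing special about $p$, and the \cite{OS_SP} embedding into a complemented copy of $\ell_2\oplus\ce$ covers $\ce=\ell_1$) go through. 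Your hand-rolled $\cs_1$ argument is therefore unnecessary, though it is essentially a re-derivation of the same proof: the structural facts you would need (every subspace of $\cs_1$ contains a complemented copy of $\ell_1$ or $\ell_2$, i.e.\ subprojectivity of $\cs_1$) are exactly what \cite{AL} provides and what Remark~\ref{r:subproj} alludes to, and combined with Lemma~\ref{l:twice compl} they close the argument; just make sure to also apply the subprojectivity to $T(E_0)$ so that the hypothesis of Lemma~\ref{l:twice compl} on $Y_0$ containing a complemented subspace $Y_1$ is met.
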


\begin{proof}
By \cite[Theorem 7.44]{Ai} any strictly singular or strictly cosingular operator
is inessential. Now suppose $T \in B(X)$ is not strictly singular.
By Theorems \ref{t:main} and \ref{thm:lor SS=IN}, there exists $E \subset X$
so that $E$ is isomorphic to either $\ell_2$ or $\ell_p$,
$T|_E$ is an isomorphism, and both $E$ and $T(E)$ are complemented.
A well-known description of inessential operators (see e.g.
\cite[Section 7.1]{Ai}, or \cite{P}) shows that $T$ is not inessential.
\end{proof}

\begin{remark}\label{r:subproj}
Alternatively, one could show that, for operators on $\cs_p$, the ideals of strictly
singular and inessential operators coincide by combining \cite[Theorem 7.51]{Ai}
with the subprojectivity of $\cs_p$, established in \cite{AL}.
\end{remark}

\begin{corollary}\label{c:duality}
Suppose $1 < p < \infty$, and $X$ is either $\cs_p$, or 
$L_p(\tau)$, where $\tau$ is a normal
faithful finite trace on a hyperfinite von Neumann algebra.
Then, for $T \in B(X)$, the conditions $(1)$, $(2)$, and $(3)$ from Corollary~\ref{c:main}
are equivalent to:
\begin{enumerate}
\setcounter{enumi}{3}
\item
$T^*$ is not strictly singular.
\item
$T$ is not strictly cosingular.
\end{enumerate}
\end{corollary}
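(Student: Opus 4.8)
The plan is to attach the two new conditions to the equivalence class of Corollary~\ref{c:main}, using two structural features of the situation. First, since $1<p<\infty$ the space $X$ is reflexive, so $X^{**}=X$ canonically and $T^{**}=T$. Second, the dual $X^*$ is again one of the spaces to which Corollary~\ref{c:main} applies: if $X=\cs_p$ then $X^*=\cs_q$ with $1<q<\infty$, and if $X=L_p(\tau)$ then $X^*=L_q(\tau)$ with the same trace on the same hyperfinite algebra ($1/p+1/q=1$). I will bring in condition~$(4)$ through the inessential operators, and condition~$(5)$ through the inclusion $\iSCS\subseteq\iIN$ recalled in Section~\ref{s:intro} together with the closed range theorem.

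First I would prove $(2)\Leftrightarrow(4)$, i.e.\ $T\notin\iIN(X)$ iff $T^*\notin\iIN(X^*)$. Because $X$ is reflexive, $U\mapsto U^*$ is a bijection of $B(X)$ onto $B(X^*)$; combined with the standard facts that $S$ is Fredholm iff $S^*$ is, and that $I+AB$ is Fredholm iff $I+BA$ is, this gives $T\in\iIN(X)\Leftrightarrow T^*\in\iIN(X^*)$ (cf.\ \cite{Ai}, \cite{Pie}). Applying the equivalence $(1)\Leftrightarrow(2)$ of Corollary~\ref{c:main} to the operator $T^*$ on the space $X^*$ yields $\iIN(X^*)=\iSS(X^*)$ as subsets of $B(X^*)$, so $T^*\notin\iIN(X^*)\Leftrightarrow T^*\notin\iSS(X^*)$. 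Chaining these equivalences gives $(2)\Leftrightarrow(4)$, and hence, via Corollary~\ref{c:main}, that $(1)$, $(2)$, $(3)$, $(4)$ are all equivalent. (One can also prove $(1)\Rightarrow(4)$ directly: by Corollary~\ref{c:main}, if $T$ is not strictly singular then $T$ is an isomorphism on a subspace $E$ for which $F=T(E)$ is complemented by a projection $R$, and a short estimate shows $T^*$ is bounded below on $(\ker R)^\perp\cong F^*\cong\ell_2$ or $\ell_q$; then $(4)\Rightarrow(1)$ follows by applying this implication to $T^*\in B(X^*)$ and recalling $T^{**}=T$.)

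It remains to fold in $(5)$. The implication $(1)\Rightarrow(5)$ is immediate: by $\iSCS(X)\subseteq\iIN(X)$ and $\iIN(X)=\iSS(X)$ (the latter from Corollary~\ref{c:main}), every strictly cosingular operator on $X$ is strictly singular, which is $(1)\Rightarrow(5)$ in contrapositive form. For $(5)\Rightarrow(1)$ I would argue by duality: if $T\notin\iSCS(X)$, there is a closed subspace $M\subset X$ with $X/M$ infinite-dimensional and $Q_M T\colon X\to X/M$ surjective; then $(Q_M T)^*=T^* Q_M^*$ is bounded below, and $Q_M^*$ maps $(X/M)^*$ isometrically onto the infinite-dimensional subspace $M^\perp\subset X^*$, so $T^*$ is bounded below on $M^\perp$, i.e.\ $T^*\notin\iSS(X^*)$; by $(1)\Leftrightarrow(4)$ this gives $T\notin\iSS(X)$. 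Hence $(1)$--$(5)$ are all equivalent.

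No single step here is hard: the least routine ingredient is the adjoint-stability of the inessential ideal on a reflexive space (or, in the alternative route, the norm estimate for $T^*$), and everything else is formal given Corollary~\ref{c:main}, reflexivity, and the closed range theorem; the real content sits in Corollary~\ref{c:main}. One point to watch is the convention for strict cosingularity: when invoking ``$Q_Z T$ is not surjective'' the subspace $Z$ should be allowed to run over all closed subspaces with $Y/Z$ infinite-dimensional, and one should check that the $M$ produced above is of this kind --- which it is, since $M^\perp\cong(X/M)^*$ is infinite-dimensional.
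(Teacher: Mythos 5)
Your argument is correct and follows essentially the same route as the paper: condition $(4)$ is obtained from reflexivity, the adjoint-stability of the inessential ideal, and an application of Corollary~\ref{c:main} to $T^*$ on $X^*$ (which is again of the required form), while condition $(5)$ comes from the duality between strict singularity and strict cosingularity. The only cosmetic difference is that where the paper cites \cite[Theorem 7.53]{Ai} for the $\iSS$/$\iSCS$ duality, you unpack it into the inclusion $\iSCS\subseteq\iIN$ plus the closed range theorem argument --- both steps check out.
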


\begin{proof}
$Z$ is reflexive, hence $T \in \iIN(Z)$ if and only if
$T^* \in \iIN(Z^*)$ (see \cite[Corollary 7.8]{Ai}).
An application of Corollary~\ref{c:main} completes the proof (4).
Duality between strict singularity and strict cosingularity
(see \cite[Theorem 7.53]{Ai}) yields $(4) \Leftrightarrow (5)$.
\end{proof}

\begin{remark}
In \cite{OS_Cstar} we show that the same result holds for $Z = \cs_\infty$.
Things are different for $Z = \cs_1$.
Combining \cite[Theorem~2.2]{Whi} and \cite{Fr}, we conclude that
$T^{*} \in \iSS( B(\ell_2))$ implies $T \in \iSS(\cs_1)$.
In contrast with Corollary \ref{c:duality}, the converse is false.
Indeed, let $j$ be a complemented embedding of $\ell_2$ into $\cs_1$,
$p$ a projection from $\cs_1$ onto the diagonally embedded $\ell_1$, and
$q$ a quotient map from $\ell_1$ to $\ell_2$. By Grothendieck's Theorem,
$q$ is $1$-summing, hence 
strictly singular.
Thus, $jqp$ is strictly singular. However, $(jqp)^* = p^* q^* j^*$ is not
strictly singular. Indeed, $q^*$ is an isometric embedding of $\ell_2$ into $\ell_\infty$,
$p^*$ is an isometric embedding of $\ell_\infty$ into $B(\ell_2)$, and
$j^* : B(\ell_2) \to \ell_2$ is a projection.

However, \cite{OS_Cstar} shows that any $T \in \iSS(\cs_1)_+$
is compact, hence $T^*$ is also compact, and, consequently, strictly singular.

The ideal of cosingular operators acting on $\cs_1$ sits properly
between the ideals of compact and strictly singular  operators.
Indeed,
strictly cosingular operators are inessential.
Thus,  Corollary~\ref{c:main} yields
$\iSCS(\cs_1) \subseteq \iSS(\cs_1)$. By \cite[Theorem 2.3.1]{AK},
there exists a surjective operator  $T:X \to Y$, where $X$ and $Y$
are complemented subspaces of $\cs_1$ isomorphic to $\ell_1$ and $\ell_2$, respectively.
Clearly, $T$ is a strictly singular operator. But, being surjective,
$T$ is not strictly cosingular. This implies that
$S=TP \in \iSS(\cs_1) \setminus \iSCS(\cs_1)$, where $P$ is a projection from
$\cs_1$ onto $X$. Finally, there is a strictly cosingular non-compact operator on $\cs_1$.
It is the canonical embedding of $\ell_1$ into $\ell_2$ combined with a projection on a copy of $\ell_1$. 
\end{remark}

The following result sharpens Proposition \ref{p:FSS not SS}
for $\cs_p$ spaces.

\begin{theorem}\label{t:C_p->C_q} The following holds:
\begin{enumerate}
\item $\iSS(\cs_p,\cs_q)=\iK(\cs_p,\cs_q)$,  if $\infty \ge  p\ge 2\ge q \ge 1$,
\item $\iSS(\cs_p,\cs_q) \supsetneq \iFSS(\cs_p,\cs_q) \supsetneq \iK(\cs_p,\cs_q)$
otherwise.
\end{enumerate}
\end{theorem}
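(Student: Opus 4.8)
The plan is to prove the two parts separately, using the Kadec--Pelczynski-type dichotomy for unconditional sequences in $\cs_p$ together with the type/cotype properties of these spaces.

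\textbf{Part (1): $\iSS(\cs_p,\cs_q)=\iK(\cs_p,\cs_q)$ when $\infty\ge p\ge 2\ge q\ge 1$.} This is essentially the discrete analogue of Proposition~\ref{p:SS=K}, and I would run the same argument. Suppose $T\in B(\cs_p,\cs_q)$ is not compact. Then there is a weakly null normalized sequence $(x_k)$ with $\inf_k\|Tx_k\|>0$; passing to subsequences, arrange that both $(x_k)$ and $(Tx_k)$ are unconditional (using the analogue of Corollary~\ref{c:uncond} for $\cs_p$, or directly that $\cs_p$ has the USP for $1<p<\infty$; for $p=\infty$ and $q=1$ one argues a bit differently, but a non-compact $T$ still produces a seminormalized weakly null sequence admitting an unconditional subsequence since $\cs_\infty=\iK(H)$ has separable subspaces with the bounded approximation property). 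By the Schatten-space Kadec--Pelczynski dichotomy (\cite[Proposition~5.4]{RX}, the discrete version), $(x_k)$ has a subsequence equivalent to the $\ell_2$-basis or to the $\ell_p$-basis; since $p\ge 2$, in either case $\|\sum_k\alpha_k Tx_k\|_p\le c_1(\sum_k|\alpha_k|^2)^{1/2}$ (the $\ell_p$ norm dominates a multiple of the $\ell_2$ norm on such spans). On the other hand $\cs_q$ has cotype $2$ (as $q\le 2$), so combined with the unconditionality of $(Tx_k)$ we get $\|\sum_k\alpha_k Tx_k\|_q\gtrsim(\sum_k|\alpha_k|^2)^{1/2}$. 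These two estimates force $T$ restricted to $\span[x_k]$ to be bounded below, contradicting strict singularity. The endpoint cases $p=\infty$ or $q=1$ need the routine check that the relevant dichotomy and cotype statements survive at the endpoints.

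\textbf{Part (2): strict inclusions when $p,q$ are not on opposite sides of $2$ (i.e.\ NOT $p\ge 2\ge q$).} First, $\iK\subsetneq\iFSS$: the spaces $\cs_p$ and $\cs_q$ each contain complemented copies of $\ell_p$ and $\ell_q$ respectively (diagonal embeddings), and whenever $p,q$ are not on opposite sides of $2$ one can choose $u<v$ in $\{p,q,2\}$ (or rather: the hypothesis guarantees either $p<2$, or $q>2$, or $p<q$, exactly the cases of Proposition~\ref{p:FSS not SS} with trivial weights), so the formal identity $\ell_u\to\ell_v$ is finitely strictly singular but not compact (\cite[Proposition~3.3]{SSTT}); compose with the complemented embedding and projection to get an element of $\iFSS(\cs_p,\cs_q)\setminus\iK(\cs_p,\cs_q)$. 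Second, $\iFSS\subsetneq\iSS$: this is exactly Proposition~\ref{p:FSS not SS} specialized to $w_1\equiv w_2\equiv 1$, so that $\cs_{\lore(w_i,p_i)}=\cs_{p_i}$; the proof there uses Lemma~\ref{l:l_p(l_2)} to find a complemented copy of $(\oplus_n\ell_2^n)_{p}$ inside $\cs_p$ and the formal identity between such spaces (which is strictly singular but not finitely strictly singular). The only point requiring care is verifying that ``not on opposite sides of $2$'' translates precisely into the disjunction ``$p_1<\max\{2,p_2\}$ or $p_2>2$'' of Proposition~\ref{p:FSS not SS}: if $p\ge 2$ and $q\le 2$ then $p_1=p\not<\max\{2,q\}=2$ and $p_2=q\not>2$, so the hypothesis of that proposition fails; conversely if $p<2$ then $p<\max\{2,q\}$, and if $q>2$ then $p_2>2$ — so the two conditions are indeed complementary, and the excerpt already remarks on this.

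\textbf{Main obstacle.} The technical heart is Part~(2), and within it the separation $\iFSS\subsetneq\iSS$, which rests on Lemma~\ref{l:l_p(l_2)} — but that lemma is already established, so for the theorem itself the real work is just bookkeeping: carefully matching up the case $p\ge 2\ge q$ with the failure of the hypotheses in Propositions~\ref{p:SS=K}/\ref{p:FSS not SS}, and handling the endpoint exponents $p=\infty$, $q=1$ in Part~(1) where $\cs_\infty=\iK(H)$ is not reflexive and does not literally fall under Corollary~\ref{c:uncond} (one uses instead that a noncompact operator out of $\iK(H)$ still fixes a copy of $c_0$ or $\ell_2$ on which the cotype-$2$ argument applies, or invokes the companion results on $C^*$-algebras). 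I expect no genuine difficulty beyond assembling these pieces.
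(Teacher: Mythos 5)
Your proposal is correct and follows essentially the same route as the paper: part (2) is obtained by specializing Proposition~\ref{p:FSS not SS} to trivial weights (the paper, like you, only adds a one-line remark that the $p=\infty$ case is handled similarly), and part (1) starts from the same construction of a weakly null sequence $(x_n)$ with $\inf_n\|Tx_n\|>0$ followed by the Arazy--Lindenstrauss dichotomy. The only cosmetic difference is in how the bad combinations are excluded in part (1): the paper applies the dichotomy to both $(x_n)$ and $(Tx_n)$ and invokes Pitt's theorem, whereas you apply it only on the domain side and use cotype~$2$ of $\cs_q$ together with unconditionality of $(Tx_n)$ --- which is precisely the paper's own argument for Proposition~\ref{p:SS=K}, so the two are interchangeable.
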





\begin{proof}
Part $(2)$ follows from Proposition \ref{p:FSS not SS} (that proposition
does not cover the case of $p = \infty$, which can be established similarly).
To tackle $(1)$, suppose $\infty \ge  p\ge 2\ge q \ge 1$,
and $T:\cs_p \to \cs_q$ is not compact.
Then there is a weakly null sequence $(x_n)$
such that $Tx_n$ is bounded away from $0$.
Indeed, by the noncompactness of $T$, there exists a sequence $(z_n) \subset \ball(\cs_p)$,
so that $\inf_{n \neq m} \|Tz_n - Tz_m\| > 0$. By Rosenthal's characterization
of $\ell_1$ (see e.g. \cite[Theorem 10.2.1]{AK}), we can assume, by passing
to a subsequence if necessary, that $(z_n)$ is weakly Cauchy. Then the sequence
$x_n = (z_{2n} - z_{2n+1})/2$ has the desired properties.

First, consider $p \ne \infty$.
Clearly, $(x_n)$ contains a basic subsequence, thus, from \cite[Theorem 3.1]{AL} 
by passing to a subsequence,  $(x_n)$ can be considered equivalent to either
an $\ell_2$-basis, or an $\ell_p$-basis. Similarly, we can assume $(Tx_n)$ is equivalent
to either an $\ell_2$-basis, or an $\ell_q$-basis.
We recall Pitt's theorem  and the fact that $q<2$
to deduce that $T$ is an isomorphism on a copy of $\ell_2$. Hence the result follows.

To handle $p=\infty$, it suffices to note that, by the proof of  \cite[Theorem 3.1]{AL},
every basic sequence in $\cs_\infty$ contains either an $\ell_2$-basis, or a $c_0$-basis.
The rest of the argument proceeds similarly.
%
%
\end{proof}

In the case of operators on $\cs_2$ (which is isometric to a Hilbert space),
more can be said. Recall that a sequence space $\ce$ is said to have
\emph{lower $2$-estimate} if, for any finite sequence of disjoint elements $(x_i)$,
we have $\| \sum_i x_i \|^2 \geq C \sum_i \|x_i\|^2$, where the constant $C$
depends only on $\ce$.

\begin{proposition}\label{p:l2->low 2 est}
If a symmetric sequence space $\ce$ has lower $2$-estimate, then any
strictly singular operator from $\ell_2$ to $\se$ is compact.
\end{proposition}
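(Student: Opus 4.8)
The plan is to argue by contradiction: suppose $T : \ell_2 \to \se$ is strictly singular but not compact, and derive a violation of the lower $2$-estimate. First I would produce a suitable normalized weakly null sequence: since $T$ is not compact and $\ell_2$ is reflexive, there is a normalized weakly null sequence $(x_n)$ in $\ell_2$ with $\inf_n \|Tx_n\| > 0$ (pass to a subsequence of a non-convergent image of the unit ball, taking differences as in the proof of Theorem~\ref{t:C_p->C_q} if needed). Since $(x_n)$ is weakly null and normalized in $\ell_2$, a standard perturbation/gliding-hump argument lets me assume $(x_n)$ is a small perturbation of an orthonormal block sequence, hence $2$-equivalent to the $\ell_2$-basis; in particular for all scalars $(\alpha_i)$ one has $\|\sum_i \alpha_i x_i\| \sim (\sum_i |\alpha_i|^2)^{1/2}$.

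Next I would analyze the images $(Tx_n)$ in $\se$. This is a seminormalized weakly null sequence (weakly null because $T$ is bounded), so by the usual Bessaga--Pe{\l}czy{\'n}ski selection principle, after passing to a subsequence it is a small perturbation of a block sequence $(u_n)$ of the canonical matrix units, i.e.\ of a disjointly supported sequence in $\se$ with $\inf_n \|u_n\| > 0$; one must be slightly careful that ``disjoint'' here means having orthogonal left and right supports, which the block structure provides. Now the lower $2$-estimate for $\ce$ applies to $(u_n)$ (disjoint elements of the symmetric sequence space $\ce$ correspond, via singular values, to disjointly supported block operators in $\se$), giving, for any $(\alpha_i)$,
$$
\Bignorm{\sum_i \alpha_i u_i}_{\se}^2 \geq C \sum_i |\alpha_i|^2 \|u_i\|_{\se}^2 \geq C' \sum_i |\alpha_i|^2 .
$$
Transferring this back through the small perturbation, $(Tx_n)$ satisfies a lower $\ell_2$-estimate. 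Combined with the boundedness of $T$ (which gives the matching upper $\ell_2$-estimate via the $\ell_2$-behavior of $(x_n)$), this shows $T$ restricted to $\span[x_n]$ is an isomorphism onto its image, with $\span[x_n]$ isomorphic to $\ell_2$. Hence $T$ is not strictly singular, a contradiction.

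The main obstacle I anticipate is the bookkeeping in the two perturbation steps: ensuring that after passing to subsequences, both $(x_n)$ is genuinely close to an orthonormal sequence \emph{and} $(Tx_n)$ is close to a disjointly (left-and-right) supported block sequence in $\se$, with perturbation errors summable and small relative to $\inf_n \|Tx_n\|$ and to the lower-estimate constant, so that the small-perturbation principle (e.g.\ \cite[Proposition~1.a.9]{LT1}) can be invoked to keep $T|_{\span[x_n]}$ an isomorphism. One should also confirm that the notion of ``disjoint elements'' for which $\ce$ has a lower $2$-estimate transfers correctly to disjointly supported operators in $\se$ — this is where the symmetry of $\ce$ and the identification of block operators with their singular-value sequences is used. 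Everything else is routine.
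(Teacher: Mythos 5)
There is a genuine gap in your second step. You claim that the seminormalized weakly null sequence $(Tx_n)$ in $\se$ can, after passing to a subsequence, be assumed to be a small perturbation of a sequence of operators with pairwise orthogonal left and right supports, ``by the usual Bessaga--Pe{\l}czy{\'n}ski selection principle.'' This is false: a block basis of the matrix-unit basis of $\se$ is \emph{not} a disjointly supported sequence of operators, and a weakly null seminormalized sequence in $\se$ need not admit any disjointly supported perturbation at all. Concretely, the first-row sequence $(E_{1n})_n$ is normalized and weakly null in $\cs_p$ and is isometrically equivalent to the $\ell_2$-basis ($\|\sum_{n\le N} E_{1n}\| = N^{1/2}$), whereas any disjointly supported seminormalized sequence in $\cs_p$ is equivalent to the $\ell_p$-basis; for $p \neq 2$ these are incompatible, so no such perturbation exists. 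The correct structural statement is a dichotomy (cf.\ \cite{AL}): a subsequence of $(Tx_n)$ either lives, up to a small perturbation, in finitely many rows and columns (a Hilbertian part), or is a perturbation of disjoint diagonal blocks. Your argument silently discards the first alternative, which is precisely the case where the lower $2$-estimate gives you nothing.

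The paper closes exactly this gap by exploiting the fact that the domain is $\ell_2$: for each $n$, the range of $I-R_n$ (the first $n$ rows and columns) is isomorphic to $\ell_2$, and a strictly singular operator between Hilbert spaces is compact, so $(I-R_n)T$ is compact and $\|(I-R_n)Th_k\| \to_k 0$. This lets one push the supports of the images off to infinity and replace $Th_k$ by the genuinely disjoint compressions $(P_{n_{k+1}}-P_{n_k})(Th_k)(P_{n_{k+1}}-P_{n_k})$ modulo a nuclear error; only then is the lower $2$-estimate brought to bear (in the paper, via the diagonalization $x_k = u_k y_k v_k$ and the commutative result \cite[Proposition 2.1]{FHKT}). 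If you insert this reduction --- first dispose of the row/column component of $(Tx_n)$ using strict singularity on Hilbertian subspaces, then run your disjoint-block argument on what remains --- your proof goes through: the remaining points you flag (perturbation bookkeeping, and the transfer of the lower $2$-estimate of $\ce$ to block-diagonal operators via singular values) are indeed routine.
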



\begin{proof}
Assume that there exists a norm one $T \in \iSS(\ell_2, \se) \backslash \iK(\ell_2, \se)$.
Find an orthonormal sequence $(h_k) \subset \ell_2$
so that, for any $k$, $\|T h_k\| > c > 0$. For any $n$, the range of $I - R_n$
is isomorphic to $\ell_2$, hence $(I - R_n) T$ is compact. In particular,
$\lim_k \|(I - R_n) T h_k\| = 0$. Passing to a subsequence if necessary,
we can assume the existence of $k_1 < k_2 < \ldots$ so that, for any $k$,
$\|(I - R_{n_k}) T h_k\| < 10^{-k} c$, and
$\|(I - Q_{n_{k+1}}) T h_k\| < 10^{-k} c$. Let
$H = \span[e_k : k \in \N]$, and
$x_k = (I - R_{n_k}) (I - Q_{n_{k+1}}) T h_k$. Then
$$
\|x_k - T h_k\| \leq \|I - R_{n_k}\| \|(I - Q_{n_{k+1}}) T h_k\| <
2 \cdot 10^{-k} ,
$$
and, therefore, the operator $S : H \to \se : h_k \mapsto Th_k - x_k$ is nuclear,
and, in particular, compact. Furthermore, we can write $x_k = u_k y_k v_k$,
where $u_k$ and $v_k$ are unitaries on the finite dimensional range of
the projection $P_{n_{k+1}} - P_{n_k}$, and $y_k$ is diagonal.
Let $j$ be the canonical diagonal embedding of $\ce$ into $\se$,
Now define the operator
$V : H \to \ce : e_k \mapsto j^{-1}y_k$.
By \cite[Proposition 2.1]{FHKT}, $V$ is compact. However,
$\|V h_k\| = \|y_k\| = \|x_k\| > c/2$ for any $k$, yielding
the desired contradiction.
\end{proof}



\subsection{Positive operators on Schatten spaces}\label{ss:+schat}
Next we turn to positive operators.
Below, we keep the notation introduced after the statement of
Proposition \ref{p:l2->low 2 est}. Recall two results from \cite{OS}.

\begin{lemma}\label{lem:diagonal}
Suppose $\ce$ is a separable symmetric sequence space, not containing $\ell_1$, and
$S : \se \to Z$ is compact ($Z$ is a Banach space). Then $\lim_n \|S|_{R_n(\se)}\| = 0$.
\end{lemma}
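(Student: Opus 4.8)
The plan is to argue by contradiction, reducing the statement to the assertion that a bounded sequence in $\se$ that ``lives in the corners'' must be weakly null, and then exploiting that compact operators are completely continuous. First note that $R_{n+1}R_n = R_n R_{n+1} = R_{n+1}$ and $\|R_n\| = 1$, so $\|S|_{R_n(\se)}\| = \|SR_n\|$ is non-increasing in $n$; let $L$ denote its limit. If $L>0$, pick for each $n$ a vector $x_n \in \ball(\se)$ with $\|SR_n x_n\|>L/2$ and set $y_n = R_n x_n$. Then $y_n \in \ball(\se)$, $y_n = R_n y_n = P_n^\perp y_n P_n^\perp$, and $\|Sy_n\| > L/2$. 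It remains to derive a contradiction from the existence of such a sequence.

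The core step is to show $(y_n)$ is weakly null in $\se$. Since $\ce$ is separable, its norm is order-continuous, so $\se = \cs_\ce$ is separable and, via the trace pairing $\langle a,b\rangle = \tr(ab)$, one has $(\se)^* = \cs_{\ce^*}$ (see \cite{GK} or \cite{Si}). Moreover $\ce$ contains no copy of $\ell_1$, so its symmetric (hence unconditional) basis is shrinking by James's theorem (\cite[Section~1.c]{LT1}); thus the biorthogonal functionals span $\ce^*$, whence $\ce^*$ is separable and its norm is order-continuous as well. For $z \in \cs_{\ce^*}$, the singular values of $P_n^\perp z$ decrease to $0$ as $n\to\infty$ while remaining dominated by those of $z$, so order-continuity of $\ce^*$ gives $\|P_n^\perp z\|_{\ce^*}\to 0$. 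Using $y_n = P_n^\perp y_n P_n^\perp$ and the trace property,
\[
\abs{\langle y_n, z\rangle} = \abs{\tr(y_n\, P_n^\perp z P_n^\perp)} \le \|y_n\|_\ce\,\|P_n^\perp z P_n^\perp\|_{\ce^*} \le \|P_n^\perp z\|_{\ce^*} \to 0 ,
\]
so $(y_n)$ is weakly null. A compact operator is completely continuous, so $Sy_n \to 0$ in norm ($Sy_n \to 0$ weakly by continuity of $S$, while $(Sy_n)$ is relatively norm-compact, hence its only possible norm cluster point is $0$). This contradicts $\|Sy_n\|>L/2$, forcing $L=0$.

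The step I expect to carry the real weight is the identification $(\se)^* = \cs_{\ce^*}$ with $\ce^*$ separable: this is exactly where both hypotheses enter — separability of $\ce$ for order-continuity and the duality, and the absence of $\ell_1$ for the shrinking property that makes $\ce^*$ separable (hence order-continuous). It is this order-continuity of $\ce^*$ that yields $\|P_n^\perp z\|_{\ce^*}\to 0$ for \emph{every} $z$, not merely for $z$ in some separable part; if $\ce$ were allowed to contain $\ell_1$ the conclusion would genuinely fail, already for a rank-one operator $S$. Everything else — the corner identities and the complete continuity of compact operators — is routine. One could equivalently argue on the adjoint side: $\|SR_n\| = \|R_n^*S^*\|$, where $R_n^*$ is again the corner operation $z\mapsto P_n^\perp z P_n^\perp$ on $\cs_{\ce^*}$, and the equicontinuous family $(R_n^*)$ tends to $0$ uniformly on the norm-compact set $S^*(\ball(Z^*))$; this variant bypasses complete continuity but relies on the same facts about $\ce^*$.
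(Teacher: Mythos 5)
Your argument is correct. Note that the paper does not actually prove this lemma --- it is imported from \cite{OS} with no proof given here --- so there is nothing internal to compare against; judged on its own, your proof is sound and self-contained. The two load-bearing points are exactly the ones you flag: separability of $\ce$ gives order continuity and the trace duality $(\se)^* = \cs_{\ce^*}$, and the absence of $\ell_1$ makes the symmetric basis shrinking, so that $\ce^*$ is itself a separable (hence order-continuous) symmetric sequence space contained in $c_0$; this is what forces $\|P_n^\perp z P_n^\perp\|_{\ce^*} \to 0$ for every $z$ in the dual, hence the weak nullity of any bounded sequence supported in the corners $R_n(\se)$. Combined with the (always valid) fact that compact operators take weakly null sequences to norm null ones, the contradiction follows; your counterexample for $\ce = \ell_1$ (a rank-one functional built from the trace) correctly shows the hypothesis is not removable.
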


\begin{proposition} \label{prop:glid_hump}
Suppose $\ce$ is a separable symmetric sequence space.
Let $(A_n)$ be a weakly null positive sequence in $\se(H)$,
which contains no convergent subsequences. Then there exists $c > 0$
with the property that, for any $\vr \in (0,1)$, there exist sequences
$1 = n_1 < n_2 < \ldots$ and $0 = m_0 < m_1 < \ldots$, so that
$\inf_k \|A_{n_k}\| > c$, and
$$
\sum_k \|A_{n_k} - (P_{m_k} - P_{m_{k-1}}) A_{n_k} (P_{m_k} - P_{m_{k-1}})\| < \vr .
$$
In particular, there exists a subsequence
$(A_{n_k})$, equivalent to a disjoint positive sequence of
finite dimensional operators.
\end{proposition}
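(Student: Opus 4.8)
The plan is to run a two-variable ``gliding hump'': one alternates between using that $(A_n)$ is weakly null, so that the compression $P_mA_nP_m$ to any \emph{fixed} finite box is small once $n$ is large, and that each $A_n$ is a \emph{fixed} compact operator, so that $A_n$ is well approximated by its compression to a sufficiently \emph{large} box. Two elementary facts do the work. \emph{Fact (a)}: for each fixed $m$, $\|P_mA_n\|_\ce\to0$ as $n\to\infty$. Indeed $P_mA_n$ has rank $\le m$, so $\|P_mA_n\|_\ce\le\|\chi_{[1,m]}\|_\ce\,\|P_mA_n\|_\infty$; since $A_n=A_n^\star\ge0$ one has $\|P_mA_n\|_\infty^2=\|P_mA_n^2P_m\|_\infty\le\tr(P_mA_n^2P_m)=\sum_{i=1}^m\|A_ne_i\|^2$, while $\|A_ne_i\|\le\|A_n\|_\infty^{1/2}\langle A_ne_i,e_i\rangle^{1/2}$ and $\langle A_ne_i,e_i\rangle\to0$ because $x\mapsto\langle xe_i,e_i\rangle$ is a bounded functional on $\se$ and $(A_n)$ is weakly null. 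Taking adjoints gives $\|A_nP_m\|_\ce\to0$ as well, hence $\|A_n-P_m^\perp A_nP_m^\perp\|_\ce\le3\|P_mA_n\|_\ce\to0$. \emph{Fact (b)}: for each fixed $A\in\se$, $\|P_m^\perp A\|_\ce\to0$ and $\|AP_m^\perp\|_\ce\to0$ as $m\to\infty$; since $\ce$ is separable, finite-rank operators are dense in $\se$, and when $A$ has rank $r$ one has $\|P_m^\perp A\|_\ce\le\|\chi_{[1,r]}\|_\ce\,\|P_m^\perp A\|_\infty\to0$.

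Next I would set $c=\tfrac12\liminf_n\|A_n\|_\ce$, which is positive since no subsequence of the weakly null sequence $(A_n)$ can be norm null (such a subsequence would be convergent). Fix $\vr\in(0,1)$ and shrink it so that $\vr\le c$. Using the convention $P_0=0$, I would build increasing indices $n_1<n_2<\cdots$ (each taken large enough that $\|A_{n_k}\|_\ce>c$) and $0=m_0<m_1<\cdots$ recursively: given $n_{k-1}$ and $m_{k-1}$, first apply Fact (a) with $m=m_{k-1}$ to pick $n_k>n_{k-1}$ with $\|P_{m_{k-1}}A_{n_k}\|_\ce$ small, then apply Fact (b) to the now-fixed operator $A_{n_k}$ to pick $m_k>m_{k-1}$ with $\|P_{m_k}^\perp A_{n_k}\|_\ce$ small. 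Writing $\pi_k=P_{m_k}-P_{m_{k-1}}$ and expanding $A_{n_k}-\pi_kA_{n_k}\pi_k$ through the orthogonal decomposition $\one=P_{m_{k-1}}+\pi_k+P_{m_k}^\perp$, each of the eight resulting terms carries a factor $P_{m_{k-1}}$ (so its $\ce$-norm is $\le\|P_{m_{k-1}}A_{n_k}\|_\ce$, using self-adjointness of $A_{n_k}$ for the terms with $P_{m_{k-1}}$ on the right) or a factor $P_{m_k}^\perp$ (so its $\ce$-norm is $\le\|P_{m_k}^\perp A_{n_k}\|_\ce$); choosing the two tolerances small enough therefore makes $\|A_{n_k}-\pi_kA_{n_k}\pi_k\|_\ce<\vr\,2^{-k-1}$. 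Summing over $k$ yields the displayed inequality, while $\inf_k\|A_{n_k}\|_\ce\ge c>0$ by construction.

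For the ``in particular'' clause, put $B_k=\pi_kA_{n_k}\pi_k$. Each $B_k$ is positive, of rank $\le m_k-m_{k-1}$, and satisfies $B_k=\pi_kB_k\pi_k$ with the $\pi_k$ mutually orthogonal, so $(B_k)$ is a disjointly supported positive sequence in $\se$; in particular it is a $1$-unconditional basic sequence, since for any scalars $(\lambda_j)$ one has $\bigl|\sum_j\lambda_jB_j\bigr|=\sum_j|\lambda_j|B_j$, so the $\ce$-norm of $\sum_j\lambda_jB_j$ depends only on $(|\lambda_j|)$. It is seminormalized, as $\|B_k\|_\ce\ge\|A_{n_k}\|_\ce-\|A_{n_k}-B_k\|_\ce>c-\vr\,2^{-k}\ge c/2$. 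Since $\sum_k\|A_{n_k}-B_k\|_\ce/\|B_k\|_\ce\le(2/c)\sum_k\|A_{n_k}-B_k\|_\ce<2\vr/c$, carrying out the construction with $\vr$ small enough makes this last quantity $<1/2$, and the principle of small perturbations (see e.g.\ \cite[Proposition~1.a.9]{LT1}) then shows that $(A_{n_k})$ is equivalent to the disjoint positive finite-rank sequence $(B_k)$.

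I expect Fact (a) to be the only real obstacle: one must extract \emph{uniform}-in-$n$ smallness of the finite-box compression $P_mA_nP_m$ from nothing more than weak nullity, and the inequality $\|A_ne_i\|\le\|A_n\|_\infty^{1/2}\langle A_ne_i,e_i\rangle^{1/2}$ --- available precisely because $A_n\ge0$ --- is what turns weak convergence of the scalars $\langle A_ne_i,e_i\rangle$ into norm convergence of the vectors $A_ne_i$. Once (a) and (b) are in place, the recursion, the term-by-term expansion of $A_{n_k}-\pi_kA_{n_k}\pi_k$, and the small-perturbation step are all routine.
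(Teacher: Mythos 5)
Your proof is correct. Note that the paper does not actually prove this proposition --- it is imported from the companion paper \cite{OS} --- so there is no in-text argument to compare against; what you give is the standard gliding-hump proof one would expect there. The one genuinely non-routine point, extracting $\|P_mA_n\|_{\ce}\to 0$ for fixed $m$ from weak nullity via the positivity inequality $\|A_ne_i\|\le\|A_n\|_\infty^{1/2}\langle A_ne_i,e_i\rangle^{1/2}$, is exactly where the hypothesis $A_n\ge 0$ is needed, and you handle it correctly; the remaining steps (density of finite-rank operators in $\se$ for separable $\ce$, the eight-term expansion, $1$-unconditionality of the disjoint blocks, and the small perturbation principle) are all sound.
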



These results allow us to prove a ``non-commutative Pitt's Theorem''
for positive operators.

\begin{theorem}\label{t:C_p->C_q +}
For $1 \leq q < p < \infty$, $B(\cs_p,\cs_q)_{+}$=$\iK(\cs_p,\cs_q)_{+}$.
\end{theorem}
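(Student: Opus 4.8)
The plan is to show that every positive $T \in B(\cs_p, \cs_q)$ with $1 \leq q < p < \infty$ is compact, by contradiction. Suppose $T \geq 0$ is not compact. Since $T$ is positive and bounded, and noncompactness is detected on the positive cone (a standard fact: if $T$ is not compact then $T|_{\cs_p{}_+}$ is not compact, using that every element of $\cs_p$ is a linear combination of four positive elements), there exists a normalized positive sequence $(z_n) \subset \cs_p{}_+$ with $(Tz_n)$ admitting no norm-convergent subsequence. I would first replace $(z_n)$ by a \emph{weakly null} positive sequence: since $1 < p < \infty$, $\cs_p$ is reflexive, so by passing to a subsequence $z_n \to z$ weakly for some $z \in \cs_p{}_+$; then $(z_n - z)$ is weakly null, and $(T(z_n - z))$ still has no convergent subsequence (otherwise $(Tz_n)$ would, since $Tz$ is a fixed vector). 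The sequence $(z_n - z)$ need not be positive, but it splits into positive and negative parts, and noncompactness of $T$ on it forces noncompactness on a weakly null positive sequence after a further routine extraction — alternatively, one can directly run the Kadec--Pelczynski machinery of \cite{RX} on self-adjoint sequences. Let me call the resulting weakly null positive non-precompact sequence $(A_n) \subset \cs_p{}_+$; normalize so $\inf_n \|A_n\| > 0$ is not automatic, but Proposition \ref{prop:glid_hump} handles exactly this.

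Next I apply Proposition \ref{prop:glid_hump} with $\ce = \ell_p$ (which is separable): there is $c > 0$ and, for any $\vr \in (0,1)$, subsequences $(n_k)$ and indices $0 = m_0 < m_1 < \cdots$ so that $\inf_k \|A_{n_k}\| > c$ and, setting $p_k = P_{m_k} - P_{m_{k-1}}$, we have $\sum_k \|A_{n_k} - p_k A_{n_k} p_k\| < \vr$. Write $B_k = p_k A_{n_k} p_k$; these are mutually disjoint positive finite-dimensional operators, so by the standard structure of disjoint sequences in $\cs_p$, $(B_k)$ is $1+\vr'$-equivalent to the canonical $\ell_p$-basis (disjointly supported blocks in a Schatten space behave like the blocks of the underlying sequence space $\ell_p$), and $\span[B_k]$ is complemented in $\cs_p$ via the ``pinching'' projection $x \mapsto \sum_k p_k x p_k$ composed with the coordinate functionals. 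Thus, after a small perturbation (using $\sum_k \|A_{n_k} - B_k\| < \vr$ small), $(A_{n_k})$ is itself equivalent to the $\ell_p$-basis and spans a complemented copy of $\ell_p$ inside $\cs_p$. In particular $T$ is \emph{not strictly singular} — it is an isomorphism on this copy of $\ell_p$.

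Now I invoke the positive strict-singularity results already proved. The operator $T$ restricted to $\span[A_{n_k}] \cong \ell_p$ is an isomorphism into $\cs_q$, and $(TA_{n_k})$ is a bounded weakly null sequence in $\cs_q$ equivalent to the $\ell_p$-basis, i.e.\ $\cs_q$ contains a copy of $\ell_p$ spanned by a positive sequence... except $TA_{n_k}$ is positive (as $T \geq 0$ and $A_{n_k} \geq 0$). Here is the contradiction I aim for: apply Proposition \ref{prop:glid_hump} a second time to the positive weakly null non-precompact sequence $(TA_{n_k}) \subset \cs_q{}_+$, obtaining a further subsequence equivalent to a disjoint positive finite-dimensional sequence in $\cs_q$, hence equivalent to the $\ell_q$-basis. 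But a subsequence of $(TA_{n_k})$ cannot be equivalent to both the $\ell_p$-basis and the $\ell_q$-basis when $p \neq q$. This is the heart of the argument and the step I expect to require the most care: I must make sure the \emph{same} subsequence is simultaneously $\ell_p$-like (inherited from being $T$ applied to an $\ell_p$-block basis, using boundedness of $T^{-1}$ on the copy of $\ell_p$) and, via the second gliding-hump application, $\ell_q$-like. Reconciling the two extractions — interleaving the subsequence choices so both properties survive — is routine but must be done honestly; once done, $p \ne q$ gives the contradiction, so $T$ was compact after all. Since $\iK(\cs_p, \cs_q)_+ \subseteq B(\cs_p, \cs_q)_+$ trivially, this establishes the claimed equality.
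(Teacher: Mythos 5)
Your overall strategy is the same as the paper's: produce a disjointly supported positive $\ell_p$-basic sequence in $\cs_p$ on which $T$ is non-compact, apply the gliding-hump Proposition \ref{prop:glid_hump} to the (positive, weakly null) image sequence to extract an $\ell_q$-basis, and derive a contradiction from $q<p$. But there is a genuine gap in your first step, and it is precisely the step you wave off as a ``routine extraction.'' To feed Proposition \ref{prop:glid_hump} you need a \emph{weakly null positive} sequence $(A_n)\subset\cs_p{}_+$ with $(TA_n)$ non-precompact. Your construction --- take positive $(z_n)$ with $(Tz_n)$ non-precompact, subtract the weak limit $z$, then split $z_n-z$ into positive and negative parts --- does not deliver this: the positive and negative parts of a weakly null self-adjoint sequence need not be weakly null (already in the commutative case, $r_n$ is weakly null but $(r_n)_+=(1+r_n)/2$ converges weakly to $1/2$, not to $0$). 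Subtracting the new weak limit restores weak nullity but destroys positivity again, so the procedure circulates. Nor can you pass non-compactness from $(z_n-z)$ to $|z_n-z|$ via positivity of $T$ without invoking a domination theorem, which is exactly the kind of nontrivial fact this circle of papers is about. The paper avoids the issue entirely: Lemma \ref{lem:diagonal} gives $\inf_n\|TR_n\|>0$ for non-compact $T$, which lets one choose each $x_k$ \emph{already supported in the block} $(P_{n_k}-P_{n_{k-1}})\cdot(P_{n_k}-P_{n_{k-1}})$; polarization then preserves the block support, so positivity, disjointness, the $\ell_p$-basis property, and weak nullity all come simultaneously and for free. Some such corner/diagonal localization is needed; it is not optional bookkeeping.

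A secondary, but easily repaired, misstep: you assert that $T$ is an isomorphism on $\span[A_{n_k}]\cong\ell_p$ and then try to make the image sequence simultaneously $\ell_p$-like and $\ell_q$-like. Nothing in the construction gives a lower bound for $T$ on that span, only $\inf_k\|TA_{n_k}\|>0$. The contradiction should instead be run exactly as in the paper, using only boundedness: if $(A_{n_k})$ is an $\ell_p$-basis and a further subsequence of $(TA_{n_k})$ is an $\ell_q$-basis, then $N^{1/q}\lesssim\big\|\sum_{j\le N}TA_{n_{k_j}}\big\|_q\le\|T\|\big\|\sum_{j\le N}A_{n_{k_j}}\big\|_p\lesssim N^{1/p}$, which fails for large $N$ since $q<p$. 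No invertibility of $T$ on any subspace is needed.
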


\begin{proof}
Suppose, for the sake of contradiction, that there exists a non-compact
$T \in B(\cs_p,\cs_q)_{+}$. By Lemma~\ref{lem:diagonal}
$ \inf_n \|TR_n\| >0$. Then there exists a sequence $(n_k)$, and
a normalized positive sequence  $(x_k)$ in $\cs_p$, so that
$x_k = (P_{n_k} - P_{n_{k-1}}) x_k (P_{n_k} - P_{n_{k-1}})$,
and $\|T x_k\| > c > 0$ for every $k$. By polarization, we can assume
that $x_k \geq 0$ for every $k$.
The sequence $(x_k)$ is equivalent to the standard basis of $\ell_p$, hence
weakly null. Therefore, the sequence $(Tx_k$) is weakly null as well.
Proposition~\ref{prop:glid_hump}
implies the existence of $k_1 < k_2 < \ldots$ so that the sequence $(Tx_{k_j})$
is equivalent to a standard basis of $\ell_q$. Thus, $T$ maps an $\ell_p$-basis
to an $\ell_q$-basis, which contradicts the boundedness of $T$.
\end{proof}

\begin{theorem}\label{t:C_p->C_p +}
For $1 \leq p < \infty$, and $T \in B(\cs_p)_{+}$, the following are equivalent:
\begin{enumerate}
\item
$T$ is compact.
\item
$T$ is strictly singular.
\item
There is no a subspace $E \subset \cs_p$, isomorphic to $\ell_p$,
so that $T|_E$ is an isomorphism, and both $E$ and $T(E)$ are complemented.
\end{enumerate}
\end{theorem}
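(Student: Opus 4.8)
The implications $(1) \Rightarrow (2) \Rightarrow (3)$ are immediate (a compact operator is strictly singular, and a strictly singular operator cannot be an isomorphism on any infinite-dimensional subspace). So the content is $(3) \Rightarrow (1)$, and I would prove the contrapositive: assuming $T \in B(\cs_p)_+$ is not compact, produce a subspace $E \subset \cs_p$ isomorphic to $\ell_p$ on which $T$ is an isomorphism, with both $E$ and $T(E)$ complemented. The plan is to combine the two quoted results from \cite{OS} (Lemma~\ref{lem:diagonal} and Proposition~\ref{prop:glid_hump}) with a positivity/polarization argument in the spirit of the proof of Theorem~\ref{t:C_p->C_q +}.

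First, since $\cs_p$ (for $1 \le p < \infty$) is a separable symmetric sequence space not containing $\ell_1$, Lemma~\ref{lem:diagonal} applied to $T$ — were it compact — would force $\lim_n \|T R_n\| = 0$; since $T$ is \emph{not} compact we get $\inf_n \|T R_n\| > 0$ (after noting that $T(I-R_n) = TQ_n'$-type pieces are finite-rank-approximable, so non-compactness must ``live at infinity''). From this I extract a normalized positive block sequence $(x_k)$ with $x_k = (P_{n_k} - P_{n_{k-1}}) x_k (P_{n_k} - P_{n_{k-1}})$ and $\|T x_k\| > c > 0$; such a block sequence is isometrically equivalent to the $\ell_p$-basis and is weakly null, hence so is $(T x_k)$. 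Now apply Proposition~\ref{prop:glid_hump} to the weakly null positive sequence $(T x_k)$ (using positivity of $T$, which guarantees $T x_k \ge 0$): after passing to a subsequence, $(T x_k)$ is equivalent to a disjoint positive sequence of finite-dimensional operators, hence to the $\ell_p$-basis, and its closed span — call it $T(E)$, where $E = \span[x_k]$ — is complemented in $\cs_p$ via the pinching/gliding-hump projection associated to the supporting intervals. The block structure of $(x_k)$ likewise makes $E = \span[x_k]$ complemented in $\cs_p$, and $T|_E$ is an isomorphism onto $T(E)$ because $\|T x_k\|$ is bounded below while $(x_k), (T x_k)$ are both $\ell_p$-bases and $T$ is bounded. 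This gives exactly the negation of $(3)$.

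The one place requiring care — the main obstacle — is ensuring that the complementing projections behave well simultaneously on the domain and range sides. On the domain side the $x_k$ are honestly disjointly supported with respect to the interval projections $P_{n_k} - P_{n_{k-1}}$, so $\span[x_k]$ is complemented by a standard diagonalization (``Tong's trick'') argument. On the range side, Proposition~\ref{prop:glid_hump} only gives that $(T x_k)$ is \emph{equivalent} to a disjoint positive sequence, not literally disjoint; so I would first pass to the genuinely disjoint sequence $y_k := (P_{m_k} - P_{m_{k-1}}) (T x_k) (P_{m_k} - P_{m_{k-1}})$ with $\sum_k \|T x_k - y_k\|$ small, complement $\span[y_k]$ via the interval pinching, and then transfer complementation to $\span[T x_k]$ by the Small Perturbation Principle (as in the proof of Lemma~\ref{l:twice compl}). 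A minor subtlety worth a sentence: $p=1$ has to be allowed, so I cannot invoke reflexivity or cotype-type dichotomies as in the non-positive $\cs_p$ case; the argument above sidesteps this since it only uses the gliding-hump structure, not any subspace dichotomy — this is precisely why restricting to positive operators buys the clean statement for all $p \in [1,\infty)$. Finally, I would remark that positivity is used only to invoke Proposition~\ref{prop:glid_hump} (which is stated for positive sequences) and to pass, via polarization, from the a priori self-adjoint block sequence to a positive one.
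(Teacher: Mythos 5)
Your argument for $1 < p < \infty$ is essentially the paper's: reduce, via Lemma \ref{lem:diagonal}, to a positive, disjointly blocked, normalized sequence $(x_k)$ with $\|Tx_k\|$ bounded below, apply Proposition \ref{prop:glid_hump} to the weakly null positive sequence $(Tx_k)$ to get an $\ell_p$-basis in the range, and conclude that $T$ maps an $\ell_p$-basis to an $\ell_p$-basis; complementation is then obtained as in Theorem \ref{thm:lor SS=IN} (your explicit small-perturbation argument on the range side is fine, and somewhat more detailed than the paper's).

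The genuine gap is the case $p=1$, which you flag as a ``minor subtlety'' that the argument ``sidesteps'' --- but this is exactly where both of your main tools fail. First, Lemma \ref{lem:diagonal} is stated for symmetric sequence spaces \emph{not containing} $\ell_1$; for $\cs_1$ the underlying space is $\ell_1$ itself, and the conclusion of the lemma is genuinely false there: the trace functional on $\cs_1$ is rank one, yet its restriction to $R_n(\cs_1)$ has norm $1$ for every $n$ (test on $E_{n+1,n+1}$). Second, a normalized disjointly supported positive sequence in $\cs_1$ is isometrically the $\ell_1$-basis and therefore is \emph{not} weakly null; consequently neither $(x_k)$ nor $(Tx_k)$ is weakly null, and Proposition \ref{prop:glid_hump} --- whose hypothesis is a weakly null positive sequence with no convergent subsequence --- cannot be invoked on the range side. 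The paper handles $p=1$ by an entirely different route: starting from a positive seminormalized sequence $(x_n)$ such that $(Tx_n)$ has no convergent subsequence, it combines Rosenthal's $\ell_1$-theorem with the weak sequential completeness of $\cs_1$ and with the fact that a positive weakly convergent sequence in $\cs_1$ is norm convergent (\cite[Theorem~1.4.3]{OS}) to conclude that both $(x_n)$ and $(Tx_n)$ must be equivalent to the $\ell_1$-basis, so that $T$ fixes a copy of $\ell_1$. You need a substitute of this kind; as written, your proof establishes the theorem only for $1 < p < \infty$.
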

\begin{proof}

The implications $(1) \Rightarrow (2) \Rightarrow (3)$ are trivial.
To establish $(3) \Rightarrow (1)$, it suffices to show that any
$T \in B(\cs_p)_{+} \backslash \iK(\cs_p)_{+}$ fixes a copy of $\ell_p$
(then proceed as in the proof of Proposition \ref{thm:lor SS=IN}).

Suppose first $1 < p < \infty$.
Proceeding as in the proof of Theorem \ref{t:C_p->C_q +}, we show that $T$ maps an
$\ell_p$-basis to an $\ell_p$-basis, hence $T$ is not strictly singular.

Let $p=1$.
 Then there exists a positive seminormalized sequence $(x_n)$ such that $Tx_n$ does not contain any convergent subsequences. Since $\cs_1$ is sequentially weakly complete, see \cite{Ak:67},  by passing to a subsequence, we may assume that $(x_n)$ is ether isomorphic to the  $\ell_1$-basis or 
it is weakly convergent. The later yields $(x_n)$ is norm convergent by \cite[Theorem~1.4.3]{OS}, which contradicts the way we have chosen $(x_n)$. Similar we obtain that $T(x_n)$ is equivalent to the $\ell_1$-basis. Therefore, $T$ is an isomorphism on a copy of $\ell_1$.
\end{proof}

\subsection{Dunford-Pettis operators}
In this subsection, we explore the compactness and singularity properties of
Dunford-Pettis operators. To introduce the relevant definitions,
suppose $X,Y$ are  Banach spaces.
$T$ is called \emph{$Z$-compact}
(\emph{$Z$-strictly singular}) if $T|_{Z^\prime}$ is compact (resp.~strictly singular)
whenever $Z^\prime \subset X$ is isomorphic to $Z$. Clearly, $Z$-compactness implies
$Z$-strict singularity. Furthermore, a Dunford-Pettis operator is $Z$-compact
for any reflexive $Z$.

In \cite{Ro76}, H.~Rosenthal proved that an operator $T \in B(L_1,Z)$ is
Dunford-Pettis if and only if it is $\ell_2$-strictly singular.
The previous paragraph shows that these two properties are equivalent to
$\ell_2$-compactness.
Below we establish a non-commutative version of Rosenthal's result.

\begin{proposition}\label{p:l_2 DP}
Suppose $Z$ is a Banach space. Then an operator $T \in B(\cs_1,Z)$ is Dunford-Pettis
if and only if it is $\ell_2$-compact.
\end{proposition}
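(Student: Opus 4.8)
The plan is to establish both implications, using the structure theory of weakly null sequences in $\cs_1$ developed earlier (in particular Proposition~\ref{prop:glid_hump} and the weak sequential completeness of $\cs_1$). The forward implication is the easy one: if $T$ is Dunford-Pettis, then it is $Z$-compact for every reflexive $Z$ (as noted in the paragraph preceding the proposition), and $\ell_2$ is reflexive, so $T$ is $\ell_2$-compact. The content is entirely in the converse: assuming $T \in B(\cs_1, Z)$ is $\ell_2$-compact, I must show $T$ maps weakly null sequences to norm null sequences. Suppose not; then there is a weakly null $(x_n) \subset \cs_1$ with $\inf_n \|Tx_n\| > 0$, and after normalizing and passing to a subsequence I may assume $(x_n)$ is seminormalized and basic.

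The key step is to analyze the structure of $(x_n)$. Since $\cs_1$ is weakly sequentially complete, a weakly null seminormalized sequence cannot have a weakly Cauchy subsequence that fails to be weakly null in a useful way; the relevant dichotomy (via Rosenthal's $\ell_1$ theorem, or directly from results on $\cs_1$) is that, passing to a subsequence, either $(x_n)$ is equivalent to the $\ell_1$-basis, or — the case I want — it has further structure that places a copy of $\ell_2$ inside $\overline{\span}[x_n]$ on which $T$ is bounded below. The first alternative is impossible: if $(x_n)$ is an $\ell_1$-basis that is weakly null, this contradicts the Schur-type behavior one needs, but more carefully, a weakly null sequence in any space cannot be equivalent to the $\ell_1$-basis (the $\ell_1$-basis is not weakly null, and equivalence preserves weak nullity only in one direction — actually here one uses that an $\ell_1$-basis has no weakly null subsequence, since $\sum \delta_n$-type functionals separate; so a genuinely weakly null sequence can never be $\ell_1$-equivalent). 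Hence we are in the structured case. Here I would invoke the decomposition underlying Proposition~\ref{prop:glid_hump} / the Kadec-Pelczynski-type analysis: a weakly null seminormalized sequence in $\cs_1$ that is not $\ell_1$-like can be perturbed, after passing to a subsequence, to a disjointly-supported sequence of finite-rank positive-type blocks, and within $\cs_1$ such disjoint blocks, when one passes to the appropriate subsequence controlling the singular-value profile, span a copy of either $\ell_1$ or $\ell_2$; ruling out $\ell_1$ again (weak nullity), we get a copy of $\ell_2$, call its span $E$, with $T|_E$ bounded below — contradicting $\ell_2$-compactness of $T$.

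I expect the main obstacle to be making the "structured case gives a copy of $\ell_2$" step fully rigorous: one must extract from the weakly null sequence $(x_n)$ an actual subspace isomorphic to $\ell_2$ (not merely $\ell_2$-behavior on a sub-block that might still carry $\ell_1$ pieces from the diagonal), and one must ensure $T$ remains an isomorphism on it. The cleanest route is probably to combine the gliding-hump reduction from Proposition~\ref{prop:glid_hump} (which handles the off-diagonal/disjoint-block structure) with the observation that a disjoint seminormalized sequence in $\cs_1$ whose blocks are uniformly "spread" in singular values spans $\ell_2$, while blocks concentrated on few singular values span $\ell_1$; the $\ell_1$ case is excluded by weak nullity, leaving $\ell_2$. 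Then $T|_E$, being bounded below on a copy of $\ell_2$ inside $\cs_1$, is an isomorphism on that copy of $\ell_2$, so $T$ is not $\ell_2$-compact — the desired contradiction. A secondary point to check is that the perturbation steps (replacing $x_n$ by its compressed block $y_n$) introduce only a small error, so that $\inf_n \|T y_n\| > 0$ persists and the isomorphism survives, which is routine via the small-perturbation principle since $T$ is bounded.
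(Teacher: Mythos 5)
Your reduction of the converse to the claim ``a seminormalized weakly null sequence in $\cs_1$ has a subsequence spanning a copy of $\ell_2$ on which $T$ stays non-compact'' is a viable strategy, but the mechanism you propose for the key step does not work in $\cs_1$. Proposition~\ref{prop:glid_hump} concerns \emph{positive} weakly null sequences, and its conclusion is that such a sequence is equivalent to a disjointly supported one; for a general weakly null sequence no such disjointification is possible. Indeed, if $(y_n)\subset\cs_1$ have mutually orthogonal left and right supports, then $\|\sum_n a_n y_n\|_1=\sum_n |a_n|\,\|y_n\|_1$, so a seminormalized disjoint sequence in $\cs_1$ is \emph{isometrically} an $\ell_1$-basis regardless of how its singular values are spread; your dichotomy ``spread singular values give $\ell_2$, concentrated ones give $\ell_1$'' is a Kadec--Pelczynski phenomenon that simply has no analogue for disjoint blocks in the trace class. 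Consequently a weakly null seminormalized sequence is \emph{never} approximately disjoint (compare $(E_{1n})$, which is weakly null, spans $\ell_2$ isometrically, and sits entirely in one row), and the route through Proposition~\ref{prop:glid_hump} is closed. The dichotomy you actually need --- every seminormalized weakly null sequence in $\cs_1$ has a subsequence equivalent to the $\ell_2$-basis --- is true, but it is the Arazy--Lindenstrauss subsequence theorem (cited elsewhere in the paper as \cite{AL}) combined with the observation that the $\ell_1$-alternative is incompatible with weak nullity; you would have to invoke it as a black box rather than derive it as you attempt to. A small additional point: you do not get that $T$ is \emph{bounded below} on the resulting copy $E$ of $\ell_2$ (you only control $\|Tx_n\|$ on the basis vectors), but that is harmless, since $(x_n)$ weakly null in $E$ with $\inf_n\|Tx_n\|>0$ already shows $T|_E$ is not compact, which is all that $\ell_2$-compactness forbids.

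The paper's own proof avoids sequences entirely and is worth comparing. From $\ell_2$-compactness it deduces, via Arazy's estimate \cite[Theorem~2.2(i)]{Ar:80}, that for each $n$ the norm of $T$ on the tail of the Hilbertian strip $\span[E_{ij}: i\leq n,\ j>N]$ (and its transpose) tends to $0$ as $N\to\infty$. Choosing indices recursively, one decomposes $\N\times\N$ into a union of strips $A_k, B_k$ (on whose spans $T$ is compact, these being copies of $\ell_2$) and a ``staircase'' of finite rectangles $C=\bigcup_k C_k$, whose span is isomorphic to $\bigl(\oplus_k \cs_1^{a_k,b_k}\bigr)_{\ell_1}$ --- an $\ell_1$-sum of finite-dimensional spaces, hence a Schur space, from which \emph{every} operator is Dunford--Pettis. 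Writing $T=TP+T(I-P)$ with $P$ the coordinate projection onto $\span[E_{ij}:(i,j)\in C]$ exhibits $T$ as a sum of two Dunford--Pettis operators. This decomposition is exactly what substitutes for the false disjointification step in your argument: the ``$\ell_1$-like'' part of $\cs_1$ is captured by the block-diagonal Schur space, and the rest is captured by Hilbertian strips where the hypothesis applies directly.
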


\begin{proof} 
Since $\ell_2$ is a reflexive space, any Dunford-Pettis operator is $\ell_2$-compact.
Suppose $T$ is $\ell_2$-compact.
Then  \cite[Theorem~2.2 (i)]{Ar:80} implies that for any $n \in \N$ and $\vr > 0$
there exists $N = N(n,\vr)$ so that
$$
\|T|_{\span[E_{ij} : i \leq n, j > N]}\|,
\|T|_{\span[E_{ij} : i > N, j \leq n]}\| < \vr. 
$$
Now select a sequence $1 = u_1 < v_1 < u_2 < \ldots$, so that, for any $k$,
$\|T|_{X_k}\|, \|T|_{Y_k}\| < 4^{-k}$. For convenience, let $u_0 = v_0 = 0$.
Here,
$$
X_k = \span[E_{ij} : (i,j) \in A_k] , \, \,
Y_k = \span[E_{ij} : (i,j) \in B_k] ,
$$
$$  \eqalign{
&
A_k = \{(i,j) : i \leq u_k, j > v_k\} = [1,u_k] \times (v_k, \infty) ,
\cr
&
B_k = \{(i,j) : j \leq v_k, i > u_{k+1}\} = (u_{k+1},\infty) \times [1,v_k] .
}  $$
Note that the spaces $X_k$ and $Y_k$ are isomorphic to $\ell_2$, hence
$T|_{X_k}$ and $T_{Y_k}$ are compact. Therefore,
$T|_{\span[E_{ij} : (i,j) \in \cup_k A_k \times B_k]}$ is compact.

Moreover, $C = \N \times \N \backslash (\cup_k A_k \times B_k)$
is the disjoint union of the sets $C_k = [u_{k-1},u_{k+1}] \times [v_{k-1},v_k]$.
Then $\span[E_{ij} : (i,j) \in C] \subset \cs_1$ is isomorphic to
$X_0 = (\oplus_k \cs_1^{a_k,b_k})_{\ell_1}$, where
$a_k = u_{k+1} - u_{k-1} + 1$, and $b_k = v_k - v_{k-1} + 1$.
As $X_0$ is an $\ell_1$ sum of finite dimensional spaces, it has the
Schur property. Consequently, any operator on $X_0$ is Dunford-Pettis.

Let $P$ be the coordinate projection from $\cs_1$ onto
$\span[E_{ij} : (i,j) \in C]$, see \cite[Proposition~3]{AL}.
Note that $TP$ factors through $X_0$, while $T(1-P)$ factors
through $T|_{\span[E_{ij} : (i,j) \in \cup_k A_k \times B_k]}$.
Thus, both $TP$ and $T(1-P)$ are Dunford-Pettis. The same
property is inherited by $T = TP + T(1-P)$.
\end{proof}

Under certain conditions, every strictly singular operator is
Dunford-Pettis.

\begin{proposition}\label{p:SS->DP}
If symmetric sequence spaces $\ce$ and ${\mathcal{F}}$ does not contain $c_0$, and
$\iSS(\ce \oplus \ell_2, {\mathcal{F}} \oplus \ell_2) \subset
 \iDP(\ce \oplus \ell_2, {\mathcal{F}} \oplus \ell_2)$, then
$\iSS(\se, \cs_{\mathcal{F}}) \subset \iDP(\se, \cs_{\mathcal{F}})$.
\end{proposition}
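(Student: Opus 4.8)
The strategy is to reduce the problem on $\se$ and $\cs_{\mathcal{F}}$ to the corresponding problem on the ``diagonal plus Hilbert'' spaces $\ce \oplus \ell_2$ and $\mathcal{F} \oplus \ell_2$, exploiting that the structure theory of Schatten-type spaces (as used repeatedly above, e.g. in \cite{AL}, \cite{OS}, \cite{OS_SP}) says that every subspace on which a non-strictly-singular operator acts isomorphically can be taken to live in a complemented copy of $\ell_2 \oplus \ce$. First I would take $T \in \iSS(\se, \cs_{\mathcal{F}})$ and a weakly null normalized sequence $(x_n)$ in $\se$; to show $Tx_n \to 0$ in norm, it suffices (after passing to a subsequence) to analyze $(x_n)$ via a gliding-hump/Kadec--Pelczynski dichotomy. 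Using Lemma~\ref{lem:diagonal} and Proposition~\ref{prop:glid_hump}-type reasoning (these require separability and absence of $c_0$ in $\ce$, which is why that hypothesis is imposed), either $(x_n)$ has a subsequence that is ``diagonal'' (supported near the corners $P_n^\perp x P_n^\perp$, hence behaving like a sequence in $\ce$) or it has a subsequence equivalent to an $\ell_2$-basis, or a mixture that splits into these two pieces. The point is that, up to small perturbation and complemented projections, $(x_n)$ is carried into a complemented copy of $\ce \oplus \ell_2$ inside $\se$, and symmetrically the image lands (up to the same kind of reduction) in a complemented copy of $\mathcal{F} \oplus \ell_2$ inside $\cs_{\mathcal{F}}$.

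\textbf{Key steps.} (1) Reduce to showing every weakly null normalized $(x_n) \subset \se$ has a subsequence with $\|Tx_n\| \to 0$; this is the definition of Dunford--Pettis (it is the ``maps weakly null sequences to norm null'' characterization from \cite[Chapter 5]{AK}). (2) Invoke the structural dichotomy: by Lemma~\ref{lem:diagonal} applied to the reflexive corners $(I-R_n)\se \cong \ell_2 \oplus(\text{finite-dim})$ and to the ``column/row'' blocks, together with a gliding-hump argument in the spirit of Proposition~\ref{prop:glid_hump} and of the proof of Proposition~\ref{p:l_2 DP}, split $\span[x_n]$ (after passing to a subsequence and a small perturbation) as a piece inside a complemented copy of $\cs_{\ell_2}\cong\ell_2$ and a piece inside a complemented copy of $\se$ restricted to a ``diagonal'' which is isomorphic to $\ce$. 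Concretely one produces complemented subspaces $U, V \subset \se$ with $U \cong \ell_2$, $V$ diagonal $\cong \ce$, and a corresponding coordinate decomposition so that $T$ restricted to $\span[x_n]$ factors, up to compact perturbation, through an operator $\tilde T : \ce \oplus \ell_2 \to \mathcal{F} \oplus \ell_2$. (3) Observe that $\tilde T$ is still strictly singular (strict singularity passes to restrictions to complemented subspaces and is stable under compact perturbation), hence by hypothesis $\tilde T \in \iDP(\ce \oplus \ell_2, \mathcal{F}\oplus\ell_2)$. (4) Pull this back: the weakly null sequence $(x_n)$ maps (modulo the compact error, which sends weakly null sequences to norm null sequences anyway) to a weakly null sequence in $\ce \oplus \ell_2$, so $\tilde T$ sends it to a norm null sequence, whence $\|Tx_n\| \to 0$. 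Since this holds for every weakly null normalized sequence, $T$ is Dunford--Pettis on $\se$.

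\textbf{Main obstacle.} The delicate point is step (2): producing the factorization of $T|_{\span[x_n]}$ through $\ce \oplus \ell_2$ up to a compact perturbation, uniformly enough that strict singularity is preserved and so that weakly null sequences are carried to weakly null sequences. One must be careful that the gliding-hump decomposition of a \emph{single} weakly null sequence is enough — we do not need a global FDD-type factorization of $T$ itself, only a subsequential one adapted to $(x_n)$, which is exactly what the ``reduce to a subsequence'' formulation in step (1) buys us. The absence of $c_0$ in $\ce$ (and $\mathcal{F}$) is used precisely to guarantee, via Lemma~\ref{lem:diagonal}, that the ``diagonal tails'' $R_n(\se)$ are asymptotically negligible for compact operators, so that the perturbation errors genuinely vanish; without it the diagonal piece could fail to be captured. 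Apart from this, the argument is a bookkeeping exercise combining the already-established structural results for Schatten spaces with the definitional stability properties of $\iSS$ and $\iDP$.
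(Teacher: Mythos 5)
Your proposal is correct and follows essentially the same route as the paper: reduce to a weakly null sequence $(x_n)$ with $\|Tx_n\|$ bounded below, pass to a subsequence so that $(x_n)$ and $(Tx_n)$ live in complemented copies of (subspaces of) $\ce\oplus\ell_2$ and ${\mathcal F}\oplus\ell_2$ respectively, and apply the hypothesis to the compressed operator $PTJ$, which is strictly singular and hence Dunford--Pettis. The only difference is that the paper simply cites the structural placement result from \cite{OS_SP} for the step you reconstruct by gliding-hump arguments, and it needs no compact-perturbation bookkeeping since the compression acts as the identity on the chosen subsequence.
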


\begin{proof}
Consider, for the sake of contradiction,
$T \in \iSS(\se, \cs_{\mathcal{F}}) \backslash \iDP(\se, \cs_{\mathcal{F}})$.
Then there exists a weakly null sequence $x_n \in \se$ so that
$\|Tx_n\| \geq c > 0$ for every $n$. By passing to a subsequence, and using
\cite{OS_SP}, 
we can assume that $(x_n)$ ($(Tx_n)$) is contained
in a complemented subspace $Z_\ce$ ($Z_{\mathcal{F}}$) of $\se$ ($\cs_{\mathcal{F}})$),
respectively, isomorphic to either (resp. $\ce$, $\ell_2$, or $\ce \oplus \ell_2$
(resp. ${\mathcal{F}}$, $\ell_2$, or ${\mathcal{F}} \oplus \ell_2$).
Let $J$ be the embedding of $Z_1$ into $\ce$, and $P$ be a projection onto $Z_2$.
Then $P T J$ is strictly singular, hence Dunford-Pettis, contradicting
$\limsup_n \|P T J x_n\| = \limsup_n \|T x_n\| \geq c$.
\end{proof}

\begin{corollary}\label{c:C1->low2est DP}
Suppose a symmetric sequence space ${\mathcal{F}}$ satisfies the lower $2$-estimate.
Then $\iSS(\cs_1, \cs_{\mathcal{F}}) \subset \iDP(\cs_1, \cs_{\mathcal{F}})$.
\end{corollary}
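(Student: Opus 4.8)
The plan is to derive Corollary~\ref{c:C1->low2est DP} from Proposition~\ref{p:SS->DP} by verifying its hypotheses for $\ce = \ell_1$ and the given $\mathcal F$. Two things must be checked: first, that neither $\ell_1$ nor $\mathcal F$ contains $c_0$; second, that $\iSS(\ell_1 \oplus \ell_2, \mathcal F \oplus \ell_2) \subset \iDP(\ell_1 \oplus \ell_2, \mathcal F \oplus \ell_2)$. The first point is immediate: $\ell_1$ is separable and weakly sequentially complete, so it cannot contain $c_0$; and a symmetric sequence space with the lower $2$-estimate cannot contain $c_0$ either, since $c_0$ fails every lower $q$-estimate (the sum of $n$ disjointly supported unit vectors still has norm $1$).

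For the main point, I would argue that any strictly singular $S \in B(\ell_1 \oplus \ell_2, \mathcal F \oplus \ell_2)$ is Dunford–Pettis. Decompose $S$ through the two canonical summands of the domain: it suffices to show $S|_{\ell_1}$ and $S|_{\ell_2}$ are each Dunford–Pettis. The restriction $S|_{\ell_2}$ is a strictly singular operator out of a reflexive space; as noted in the excerpt (the paragraph preceding Proposition~\ref{p:l_2 DP}), a Dunford–Pettis operator is $Z$-compact for reflexive $Z$, and conversely here I want the cheaper direction: any operator with reflexive domain whose relevant restriction is strictly singular\ldots{} actually the clean statement is that every operator $\ell_2 \to \mathcal F \oplus \ell_2$ that is strictly singular is compact, because $\mathcal F$ has the lower $2$-estimate (so by Proposition~\ref{p:l2->low 2 est} a strictly singular operator $\ell_2 \to \cs_{\mathcal F}$ is compact, and the analogous sequence-space statement holds by the same proof, using that a strictly singular operator into $\ell_2$ is compact), and a compact operator is trivially Dunford–Pettis. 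For $S|_{\ell_1}$: $\ell_1$ has the Schur property, so \emph{every} bounded operator on $\ell_1$ — hence every operator out of $\ell_1$ — is Dunford–Pettis, with no singularity hypothesis needed at all. Combining, $S = S|_{\ell_1} \oplus S|_{\ell_2}$ (composed with the coordinate projections) is Dunford–Pettis.

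Having verified both hypotheses, Proposition~\ref{p:SS->DP} applied with $\ce = \ell_1$ (so $\se = \cs_1$) and the given $\mathcal F$ yields $\iSS(\cs_1, \cs_{\mathcal F}) \subset \iDP(\cs_1, \cs_{\mathcal F})$, which is exactly the assertion.

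I do not anticipate a genuine obstacle here: the corollary is essentially a bookkeeping consequence of Proposition~\ref{p:SS->DP} together with the Schur property of $\ell_1$ and Proposition~\ref{p:l2->low 2 est}. The only point requiring a sentence of care is the passage from the operator-space statement of Proposition~\ref{p:l2->low 2 est} to the sequence-space statement ``strictly singular $\ell_2 \to \mathcal F$ implies compact,'' but this is either already subsumed (since $\se$ contains $\ell_2$ and $\mathcal F$ embeds diagonally into $\cs_{\mathcal F}$, the operator-space version applied to the composition gives it directly) or provable by repeating the proof of Proposition~\ref{p:l2->low 2 est} verbatim in the commutative setting.
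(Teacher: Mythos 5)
Your proposal is correct and follows essentially the same route as the paper: check that $\ell_1$ and $\mathcal F$ do not contain $c_0$, then verify the hypothesis of Proposition~\ref{p:SS->DP} by splitting a strictly singular $S$ on $\ell_1\oplus\ell_2$ into the $\ell_1$-part (Dunford--Pettis by the Schur property) and the $\ell_2$-part (compact by Proposition~\ref{p:l2->low 2 est}, transferred via the diagonal embedding of $\mathcal F$ into $\cs_{\mathcal F}$). The only cosmetic difference is that the paper rules out $c_0\subset\mathcal F$ by citing nontrivial cotype from \cite[Section 1.f]{LT2}, whereas you argue directly from the lower $2$-estimate; both are fine.
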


\begin{proof}
By \cite[Section 1.f]{LT2}, ${\mathcal{F}}$ has non-trivial cotype, hence it
cannot contain $c_0$. To finish the proof, we need to establish
the inclusion
\begin{equation}
\iSS(\ell_1 \oplus \ell_2, {\mathcal{F}} \oplus \ell_2) \subset
 \iDP(\ce \oplus \ell_2, {\mathcal{F}} \oplus \ell_2) ,
\label{eq:inclusion}
\end{equation}
and then apply Proposition \ref{p:SS->DP}.
To establish \eqref{eq:inclusion}, recall that $\ell_1$ has the Schur Property,
hence any operator from $\ell_1$ is Dunford-Pettis. By Proposition~\ref{p:l2->low 2 est},
any strictly singular operator from $\ell_2$ to ${\mathcal{F}} \oplus \ell_2$
is compact, hence Dunford-Pettis.
\end{proof}

\section{Products of strictly singular operators}\label{s:prod}

We apply the results of this paper to generalize a theorem from \cite{Mil}
to a non-commutative setting.

\begin{theorem}\label{t:milman}
Suppose a Banach space $Z$ is isomorphic to either $\cs_p(H)$ ($1 \leq p \leq \infty$,
$H$ is a Hilbert space),
or to $L_p(\tau)$ ($1 < p < \infty$, $\tau$ is a normal faithful finite trace
on a semifinite von Neumann algebra). If $T, S \in \iSS(Z)$, then $TS$ is compact.
\end{theorem}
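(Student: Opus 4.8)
The plan is to run Milman's argument \cite{Mil} in the non-commutative setting. Its engine is the observation that in each space $Z$ under consideration every seminormalized weakly null sequence has, after passing to a subsequence, either the $\ell_2$-basis or the $\ell_p$-basis (for $Z=\cs_\infty$: the $\ell_2$-basis or the $c_0$-basis), and that these two ``model'' bases form a \emph{two-element} chain for the domination order; one cannot climb a strictly increasing chain of length three inside a set with two elements. Suppose, for contradiction, that $TS$ is not compact; since strict singularity and compactness are isomorphic invariants, we may take $Z$ to be literally one of the listed spaces. The first step is to produce a normalized weakly null sequence $(x_n)\subset Z$ and a constant $c>0$ with $\|TSx_n\|\geq c$ for every $n$. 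When $1<p<\infty$ this is routine: starting from a bounded sequence $(w_n)$ such that $(TSw_n)$ has no norm-convergent subsequence, apply Rosenthal's $\ell_1$ theorem (the space being reflexive, hence $\ell_1$-free) to pass to a weakly Cauchy subsequence, and then take normalized differences of consecutive terms, exactly as in the proof of Theorem~\ref{t:C_p->C_q}. The endpoint cases $Z=\cs_1,\cs_\infty$ need more care and are discussed at the end.

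Now set $y_n=Sx_n$. As $S$ is bounded, $(y_n)$ is weakly null; and $c/\|T\|\leq\|y_n\|\leq\|S\|$, so $(y_n)$ is seminormalized, and likewise $(TSx_n)$ is seminormalized and weakly null. Passing to subsequences several times --- using Corollary~\ref{c:uncond} together with the Kadec--Pelczynski-type dichotomy \cite[Proposition~5.4]{RX} in the continuous case, and \cite[Theorem~3.1]{AL} (whose proof also handles $\cs_\infty$, with $c_0$ in place of $\ell_p$) in the Schatten case --- I may assume $(x_n)$, $(Sx_n)$ and $(TSx_n)$ are each equivalent to one of the model bases, say to the $\ell_r$-, $\ell_s$- and $\ell_t$-basis respectively, with $r,s,t\in\{2,p\}$ (or $\{2,\infty\}$ when $Z=\cs_\infty$, where the $\ell_\infty$-basis is read as the $c_0$-basis and $1/\infty$ as $0$). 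Testing the inequality $\|\sum_i a_i Sx_i\|\leq\|S\|\,\|\sum_i a_i x_i\|$ on $a_1=\cdots=a_N=1$ gives $N^{1/s}\lesssim N^{1/r}$, hence $s\geq r$; symmetrically $t\geq s$. On the other hand, if $r=s$ then $\|S(\sum_i a_i x_i)\|=\|\sum_i a_i Sx_i\|\sim(\sum_i|a_i|^r)^{1/r}\sim\|\sum_i a_i x_i\|$, so $S$ is an isomorphism on the infinite-dimensional subspace $\span[x_i:i\in\N]$, contradicting $S\in\iSS(Z)$; hence $r<s$, and in the same way $s<t$. But $r<s<t$ cannot hold inside a two-element set. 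This contradiction shows $TS$ is compact. (The case $p=2$, where $Z$ is a Hilbert space, is immediate since $\iSS=\iK$ there.)

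The part I expect to demand the most care is the non-reflexive endpoints $Z=\cs_1$ and $Z=\cs_\infty$, where the first step above must be modified. For $Z=\cs_\infty=\iK(H)$ one only notes that $\cs_\infty$ contains no copy of $\ell_1$ (its dual $\cs_1$ is separable), so Rosenthal's theorem still yields a weakly Cauchy subsequence and the rest goes through unchanged with $\{2,\infty\}$ replacing $\{2,p\}$. For $Z=\cs_1$ one uses its weak sequential completeness \cite{Ak:67}: given a bounded $(w_n)$ with $(TSw_n)$ having no norm-convergent subsequence, Rosenthal's theorem applied to $(w_n)$ either returns a weakly Cauchy, hence weakly convergent, subsequence --- whence normalized differences give a seminormalized weakly null $(x_n)$ with $\|TSx_n\|\geq c$, and since a seminormalized weakly null unconditional sequence in $\cs_1$ is automatically equivalent to the $\ell_2$-basis (the $\ell_1$ alternative of \cite[Theorem~3.1]{AL} being incompatible with weak nullity), the chain argument above, with no ``cross'' cases, forces $S$ to be an isomorphism on a copy of $\ell_2$ --- or it returns a subsequence equivalent to the $\ell_1$-basis, in which case applying Rosenthal once more to $(Sw_n)$ gives two options: the $\ell_1$ alternative would make $S$ an isomorphism on $\span[w_n]\cong\ell_1$, while the weakly Cauchy alternative, combined with weak sequential completeness and the non-convergence of $(Sw_n)$, produces a seminormalized weakly null sequence on which to run the dichotomy for $T$. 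In every branch some strictly singular operator is forced to restrict to an isomorphism on an infinite-dimensional subspace, which is the contradiction sought.
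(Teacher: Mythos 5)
Your overall strategy is the same as the paper's: reduce to a seminormalized weakly null sequence $(x_n)$ with $\inf_n\|TSx_n\|>0$, apply a Kadec--Pelczynski type dichotomy to each of $(x_n)$, $(Sx_n)$, $(TSx_n)$, and observe that a strictly increasing three-term chain of model bases cannot fit into a two-element set. For the Schatten classes this is sound for all $1\le p\le\infty$ (your treatment of the endpoints, via weak sequential completeness of $\cs_1$ and the absence of $\ell_1$ in $\cs_\infty$, matches the paper's except that the paper disposes of $\cs_\infty$ by duality), and it is sound for $L_p(\tau)$ with $2\le p<\infty$.

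The gap is the case $Z=L_p(\tau)$ with $1<p<2$. The dichotomy you invoke there --- that every seminormalized weakly null sequence has a subsequence equivalent to the $\ell_2$- or the $\ell_p$-basis --- is false in that range: \cite[Proposition 5.4]{RX} is a statement about $p\ge 2$, and already the commutative space $L_p(0,1)$ (a legitimate instance of the hypotheses) contains, for any $p<q<2$, a sequence of i.i.d.\ $q$-stable random variables; this sequence is seminormalized and unconditional, hence weakly null by Lemma \ref{l:null}, and it spans an isometric copy of $\ell_q$, so no subsequence is equivalent to the $\ell_2$- or the $\ell_p$-basis. Once the indices $r<s<t$ may range over all of $[p,2]$, the chain argument gives no contradiction: a priori $S$ could carry an $\ell_p$-basis to an $\ell_q$-basis and $T$ carry that to an $\ell_2$-basis, with both restrictions strictly singular (like formal identities $\ell_p\to\ell_q\to\ell_2$). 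The paper closes this case by duality: for $1<p<2$, Corollary \ref{c:duality} shows $T^*,S^*\in\iSS(L_{p'}(\tau))$ with $p'=p/(p-1)>2$, so $(TS)^*=S^*T^*$ is compact by the already-settled case $p'>2$, and hence $TS$ is compact. You need to add this reduction (or some substitute for the failed dichotomy) to complete the proof.
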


\begin{proof}

Suppose, for the sake of contradiction, that $TS$ is not compact.
Then there exist $c > 0$, and a sequence $(x_i) \subset \ball(Z)$, so that
$\inf_i \dist(TSx_i, \span[TSx_j : 1 \leq j < i]) > c$.

First consider the case of $Z = L_p(\tau)$,
with $2 \leq p < \infty$. We can assume that $(x_i)$ is
weakly null. Indeed, by passing to a subsequence, we guarantee that
$(x_i)$ is weakly Cauchy. Now the sequence
$x_i^\prime = (x_{2i} - x_{2i-1})/2$ lives in $\ball(Z)$, and
is weakly null. An easy calculation shows that, for any $i$,
$$
\dist(TSx_i^\prime, \span[TS x_j^\prime : 1 \leq j \leq i-1]) \geq
\frac{1}{2} \dist(TSx_i, \span[TS x_j : 1 \leq j \leq 2i-1]) \geq
\frac{c}{2} .
$$


Non-commutative Kadec-Pelczynski dichotomy \cite{PX} shows that any unconditional
seminormalized sequence contains a subsequence equivalent to either
the $\ell_2$-basis, or $\ell_p$ basis. If $(x_i)$ is equivalent to
the $\ell_p$-basis, then $(Sx_i)$ must have a subsequence equivalent
to the $\ell_p$-basis as well, contradicting the strict singularity
of $S$. If $(x_i)$ is equivalent to the $\ell_2$-basis, then $(Sx_i)$
must have a subsequence equivalent to the $\ell_p$-basis, and the
same holds for $(TSx_i)$, which is impossible, due to the strict
singularity of $T$.

The case of $Z = \cs_p(H)$ ($1 < p < \infty$) is handled similarly.
As above, we can assume that the sequence $(x_i)$ is weakly null.
By passing to a further subsequence, we can assume that the sequences
listed above are basic. Passing to a subsequence once again, and invoking
\cite[Theorem 1]{AL}, we can assume that each of the sequences $(x_i)$,
$(Sx_i)$, and $(TSx_i)$ is equivalent either to the $\ell_p$-basis,
or to the $\ell_2$-basis.

Now we turn to $Z = \cs_1(H)$. Applying Rosenthal's $\ell_1$ theorem
(see e.g. \cite[Theorem 10.2.1]{AK}) and passing to a subsequence,
we can assume that, for each of the sequences $(x_i)$, $(Sx_i)$, and
$(TSx_i)$, one of the following is true: (i) the sequence is weakly Cauchy, or
(ii) the sequence is equivalent to the $\ell_1$-basis.

Suppose first $(x_i)$ is weakly Cauchy. Then, as above, we can assume
that $(x_i)$ is weakly null. Consequently, the sequences $(Sx_i)$ and
$(TSx_i)$ are weakly null as well. Passing to a further subsequence,
we can assume that all three sequences are basic as well. By passing
to a subsequence, and invoking \cite[Theorem 1]{AL}, we can assume that
all three sequences listed above are equivalent to the $\ell_2$-basis,
which contradicts the strict singularity of $T$ and $S$.

Now suppose $(x_i)$ is equivalent to the $\ell_1$ basis. Due to the strict
singularity of $S$, $(Sx_i)$ cannot be equivalent to the $\ell_1$-basis,
hence is is weakly Cauchy. Moreover, we can assume that $(Sx_i)$ is weakly
null. Indeed, we can pass to the sequence $x_i^\prime = (x_{2i} - x_{2i-1})/2$,
which is equivalent to the $\ell_1$-basis. Then $(TSx_i)$ is weakly null as well.
Passing to a subsequence as before, we obtain that both $(Sx_i)$ and $(TSx_i)$
are equivalent to the $\ell_2$-basis, contradicting the strict singularity of $T$.

Finally, suppose $Z = \cs_\infty(H)$, or $Z = L_p(\tau)$ with $1 < p < 2$. 
By Corollary \ref{c:duality},
$S^*$ and $T^*$ are strictly singular as well. By the reasoning above,
$S^* T^* = (TS)^*$ is compact, hence so is $TS$.
\end{proof}

\end{document}